\documentclass{cmslatex}
\usepackage[paperwidth=7in, paperheight=10in, margin=.875in]{geometry}
\usepackage{graphicx}
\usepackage{amsfonts}
\usepackage{amsmath}
\usepackage{amssymb}
\usepackage{fancyhdr}
\usepackage{footmisc}
\usepackage{indentfirst}
\usepackage[normalem]{ulem}
\usepackage{placeins}
\usepackage{listings}
\usepackage{subfigure}
\usepackage{multirow}
\usepackage{comment}
\usepackage{todonotes}
\usepackage{xcolor}
\usepackage{soul}
\usepackage{url}
\usepackage{bm}
\usepackage{bbm}
\usepackage{algorithm}
\usepackage{algpseudocode}
\usepackage[version=3]{mhchem}

\thinmuskip = 0.5\thinmuskip \medmuskip = 0.5\medmuskip
\thickmuskip = 0.5\thickmuskip \arraycolsep = 0.3\arraycolsep

\DeclareMathOperator*{\argmin}{argmin}

\newtheorem{thm}{Theorem}[section]
\newtheorem{remark}[thm]{Remark}
\newtheorem{assumption}{Assumption}

\allowdisplaybreaks
\definecolor{revisecolor}{rgb}{0.0, 0.0, 0.0}

\begin{document}
\title{Optimal control of Markov jump processes : asymptotic analysis,
algorithms and applications to the modelling of chemical reaction systems}
\author{
 Wei Zhang\thanks{Institute of Mathematics, Freie Universit\"{a}t Berlin,
 Arnimallee 6, 14195 Berlin, Germany,  wei.zhang@fu-berlin.de}
 \and Carsten Hartmann\thanks{Institute of Mathematics, Brandenburgische
     Technische Universit\"{a}t Cottbus-Senftenberg, Platz der Deutschen Einheit 1, 03046 Cottbus, Germany,
 hartmanc@b-tu.de}
\and Max von Kleist\thanks{Institute of Mathematics, Freie Universit\"{a}t Berlin,
 Arnimallee 6, 14195 Berlin, Germany,  vkleist@zedat.fu-berlin.de}}
\maketitle

\begin{abstract}
Markov jump processes are widely used to model natural and engineered processes. In the context of biological or
chemical applications one typically refers to the chemical master equation
(CME), which models the evolution of the probability mass of any copy-number
combination of the interacting particles. When many interacting particles
(``species'') are considered, the complexity of the CME quickly increases,
making direct numerical simulations impossible. This is even more problematic
when one aims at \textit{controlling} the Markov jump processes defined by the CME.

In this work, we study both \textit{open loop} and \textit{feedback} optimal control problems of the Markov jump
processes in the case that the controls can only be
switched at fixed control stages. Based on Kurtz's limit theorems, we prove the convergence of the respective control value
functions of the underlying Markov decision problem as the copy numbers of the species go to infinity. 
In the case of the optimal
control problem on a finite time-horizon, we propose a hybrid control policy algorithm to overcome the difficulties due to the
curse of dimensionality when the copy number of the involved species is large.
Two numerical examples demonstrate the suitability of both the analysis and the proposed algorithms.
\end{abstract}
\begin{keywords}
  Markov jump process, optimal control problem, large number limit, feedback control policy, hybrid control policy.
\end{keywords}
\begin{AMS}
\end{AMS}
\section{Introduction}
\label{sec-intro}
In the past decades, discrete-state Markov jump processes have been a major research
topic in probability theory receiving much attention in applications like economics, physics, biology and chemistry; see e.g.~ \cite{Wilkinson2006,Kampen2007,Gardiner2010,Gillespie1991,Allen2011,Weibull1997}.
For example, in the modelling of chemical reactions, a single state is defined
as one possible copy-number combination of the distinct interacting chemical
species. After a random waiting time, a reaction occurs and changes this
copy-number combination. Since the time and order in which chemical reactions
occur is random (referred as \textit{intrinsic} noise), the evolution of the
state of the system is random as well. The chemical master equation (CME) models the probability of all possible outcomes over time, giving rise to an extremely large state space (consisting of all copy-number combinations). Consequently, solving the chemical master equation or approximating its solution computationally is a non-trivial, yet unsolved task that has been the objective of intense research over the past decades (see e.g. \cite{Pahle2009} for a summary).

In many real world applications, one does not only aim at propagating or
simulating a process forward in time, but also aims at controlling and optimizing it. In this case, the model equations of a controlled system contain extra
terms or parameters that can be manipulated by the decision maker according to some control policy. The latter is
chosen so that a given cost functional reaches an optimal (e.g. minimum) value.
There are two general approaches to an optimal control task, depending on whether the admissible control policies are allowed to depend on the system states (\emph{feedback} or \emph{closed loop} control problem) or not (\emph{open loop} control problem).
In the case of an open loop control, the control follows a fixed, deterministic policy regardless of the fact that the underlying dynamics are stochastic.
On the other hand, feedback controls are random in the sense that each
realization of the process gives rise to a different control that is adapted
according to the random states of the system. 
\textcolor{revisecolor}{
In principle, one can also consider the case where the control policies depend not only on the
current states of the system but also on the past.
However, for Markov jump processes, it is known that under certain assumptions the optimal cost value can be
achieved by a feedback control policy, which only depends on system's current
states (see Section 4.4 of~\cite{Puterman1994} for a precise statement).}

For small or moderately sized systems, the underlying optimal control
problem can be solved numerically using the dynamic programming principle~\cite{Puterman1994,Bellman2003,Winkelmann2014,Duwal2015}. However, for large systems,
solving the optimal control policy by dynamic programming or related methods
becomes difficult without suitable approximations or remodelling
steps~\cite{adp_powell,Sutton1998,adp_bertsekas}. Within the area of systems biology
or chemical engineering, one such remodelling step that has been
extensively exploited by control engineers is to replace the stochastic
dynamics by a deterministic system of ordinary differential equations (ODE) that ignores the intrinsic noise (e.g., see
\cite{kirk2004optimal, Lenhart2007}). These continuous deterministic reaction
rate equations model the concentrations of the interacting chemical species by
one ODE per species. The approximation of the
stochastic system using the ODE system is mainly based on Kurtz's seminal work \cite{limit_thm_ode1971, ode_kurtz_limit,
Kurtz1978, kurtz_limit_thm_diffusion_approx,ball06,kang2013,kang2014} (also
see the recent work on multiscale analysis~\cite{cappelletti2016,pfaffelhuber2015}), 
which shows that the particle numbers per unit volume of the original Markov jump processes without control can be approximated by the classical reaction rate equations in the large copy-number regime (parameterized by either the total number of particles $N$ or the reaction volume $V$).

In this article, we investigate the relationship between the optimal control problem for the original Markov jump process 
and the limiting ODE system. Stochastic control problems for Markov jump
processes are also termed ``Markov decision processes'' (MDP) \cite{Bellman2003,Puterman1994,howardr60}.
We confine our analysis to the situation that the control can only be
changed at given discrete points in time (called \textit{control stages}).
The key contribution of this paper is twofold: Firstly, applying Kurtz's
limit theorem, we prove convergence of the cost value of the controlled Markov jump
process to the cost value of the controlled limiting ODE
system as $N\rightarrow \infty$, both in the open loop and the feedback case; the convergence results then imply that the optimal open loop control policy for the ODE
system can be applied to control the Markov jump process when $N$
is large where the optimal cost is achieved asymptotically. Secondly, based on
these theoretical results, we propose a hybrid control policy for the optimal control problem
of the Markov jump process on a finite time-horizon;
the hybrid control policy not only exploits the information of the optimal
control policy for the limiting ODE, but also takes into account the
stochasticity of the jump process and thus improves
the optimal control policy from the ODE approximation in the pre-limit regime when $N$ is moderately large; in terms of
computational complexity, the hybrid algorithm avoids the curse of
dimensionality by using an on-the-fly state space truncation.
Broadly speaking, the hybrid control policy is related to approximative dynamical programming (ADP)
and reinforcement learning that have been extensively studied in the last years~\cite{adp_powell,adp_bertsekas,Sutton1998,Shani_pomdp_survey}.

\subsection*{Related work}

Although this work is mainly motivated by epidemic, biological and chemical reaction
models, it is important to note that the asymptotic analysis of the related optimal control problems appears relevant in scheduling  and queueing theory~\cite{gross_queue,limits_whitt}. In the context of scheduling and queueing problems, the relevant asymptotic regime is the \emph{heavy traffic limit}, under which the stochastic model can be approximated by either a diffusion process or an ODE system; the limit models are named \emph{Brownian network} or \emph{fluid approximation}, depending on whether the limiting differential equation is stochastic or deterministic.
Readers interested in the Brownian network approach may
consult~\cite{harrison1997,martin_soner_1996,Mandelbaum1998,kushner_martins1996,kushner_ramachandran1989,limits_whitt}
and references therein. For the fluid approximation of stochastic queueing
networks, we refer to~\cite{dai_meyn1995,dai1995,Mandelbaum1998};
cf.~\cite{mandelbaum_state_dependent_1998} for a discussion of both the fluid
and the Brownian network approximation. Optimal control of
queueing networks and their fluid approximations has been studied in~\cite{baeuer2000,baeuerle2002,dupuis_game2003,Meyn_book,maglaras2000,meyn_parki,Piunovskiy2011,ramanan2011}; see also \cite{Kushner2001,PangDay2007} for an approach using weak convergence techniques.

Despite the vast literature on queueing systems, we emphasize that
the models and problems therein are quite different from the ones studied herein. For example, for queueing networks, one is often interested in minimizing the total queue length
(or its linear combination)
by controlling how each server should allocate the service time to each queue,  which explains that many of the rigorous results are confined to linear cost functions or birth-death-processes (e.g. \cite{baeuerle2002,Piunovskiy2011}).
In the current work, besides the differences of the models, the running cost
is allowed to be an arbitrary bounded and (local) Lipschitz function in the system states (see Assumption~\ref{assump-4} in Section~\ref{sec-setup}) and the jump rates
of the process may depend on the controls.
A limitation of our work is that the controls are switched only at discrete
time points (control stages). However, this
assumption allows us to obtain stronger convergence results (with explicit convergence order in some cases)
and covers applications in epidemic or chemical reaction
networks~\cite{Schaefer2004,Bennett2013,Hauskrecht_Fraser2000,Winkelmann2014,Duwal2015}.
Specifically, we will prove the asymptotic optimality of finite and infinite time-horizon open loop policies arising from the deterministic limit equations. 
Our work complements available results on the asymptotic optimality of the
associated closed loop policies or \emph{tracking policies}
(e.g.~\cite{baeuer2000,maglaras2000,meyn_parki}) and gives rise to numerical
algorithms that do not require to solve the dynamic programming equations on
the whole state space (see Section \ref{sec-hybrid}).

\subsection*{Outline}
The remainder of this paper is organized as follows: In Section~\ref{sec-setup}, we introduce
the mathematical problem along with the notations used throughout this paper
and two paradigmatic examples.
Section~\ref{sec-analysis-ocp} is devoted to the extension of Kurtz's limit
theorem for Markov jump processes and to apply it to study optimal control problems. Based on this analysis, a hybrid control algorithm is proposed and discussed in
Section~\ref{sec-hybrid}.
We present several numerical examples in Section~\ref{sec-examples}, and a technical lemma is recorded in Appendix~\ref{app-1}.

\section{Mathematical Setup}
\label{sec-setup}
In this section, we will first introduce our problem, the notations used, and finally sketch
two concrete situations in which the problem is relevant.

\subsection{Controlled Markov jump processes}
\label{sub-setup-mjp}

Let $\mathbb{X}$ be a discrete lattice in $\mathbb{R}^n$ and consider the Markov jump
process $x(t)$ on it.
Suppose that at time $t \ge 0$ and given $x(t) = x \in \mathbb{X}$,
the probability for making a transition from $x$ to $x + l$
within the infinitesimal time interval $[t, t + ds)$ is $f(x, l)\,ds$, $l \in \mathbb{X}$.
Denoting $\tau$ the waiting time
\begin{align}
  \tau = \inf_{s > t}\big\{s-t ~;~ x(s) \neq x(t)\big\}\,,
  \label{waiting-time}
\end{align}
it is known that $\tau$ follows an exponential distribution with the rate $\lambda(x) = \sum_{l
\in \mathbb{X}} f(x, l)$, i.e.~$\tau \sim \mbox{Exp}(\lambda(x))$.

\subsubsection*{Jump rates}
In this work, we suppose that the jump process $x(t)$ depends on both a parameter $N \gg
1$ and the control $\nu \in \mathcal{A}$, where $\mathcal{A}$ is the \textit{control set}.
In applications, $N$ may be related to system's volume or the magnitude of particle
numbers, while the control $\nu$ may affect the jump rates $f$.
To indicate these dependencies, we denote the jump process as $x^{\nu,N}$ and
also introduce the normalized process $z^{\nu,N}(t) = N^{-1} x^{\nu,N}(t)$. It is convenient to think of the normalized variable $z$
as a particle density, which is why we will sometimes refer to $z^{\nu,N}(t)$
as the \emph{normalized density process}.
Notice that $z^{\nu,N}$ is a Markov jump process on the \textit{scaled lattice} and, due
to its importance in our analysis, we use the notation $\mathbb{X}_N$ and
$f^{\nu,N}_{\textcolor{revisecolor}{d}} : \mathbb{X}_N \times \mathbb{X}_N
\rightarrow \mathbb{R}^+$ for its state space and jump rates, respectively, 
where $\mathbb{R}^+$ is the set consisting of non-negative real numbers.
$\mathbb{X}$ and $f^{\nu,N}_{\textcolor{revisecolor}{o}} : \mathbb{X} \times \mathbb{X} \rightarrow \mathbb{R}^+$
will be reserved for the \textit{original process} $x^{\nu,N}$.
\textcolor{revisecolor}{Notice that the jump rates of the original process may depend on $N$.
The subscripts ``d'' and ``o'' which appear in the rate functions simply indicate
that they refer to either the normalized density process or the original process.}
Specifically,
we have $\mathbb{X}_N = \{\frac{x}{N}\, |\, x \in \mathbb{X}\}$ and $f^{\nu,
N}_d(z,l) = f^{\nu,N}_o(Nz, Nl)$ for $z, l \in \mathbb{X}_N$. 

\subsubsection*{Controls}
We will discuss the control policies and the controlled Markov jump process in detail.
For the sake of simplicity, we will refer to the normalized process $z^{\nu,
N}$ only, stressing that all considerations are transferable to the process $x^{\nu,N}$.
Suppose that on the time interval $[0, T]$, $K+1$ time points $0 = t_0 < t_1 < \cdots < t_j < t_{j+1} < \cdots < t_K = T$ are given and
fixed. At each time $t_j$, $0 \le j < K$, called \textit{control stage}, we
are allowed to select some
control $\nu_j \in \mathcal{A}$ and apply it to the jump process in order to
influence its jump rates.
Once a control $\nu_j$ is selected at time $t_j$, it will persistently take
effect during  the time interval $[t_j, t_{j+1})$.
  When the selection of controls $\nu_j$ is allowed to depend on the
  system's current states at time $t_j$, the control policy is called feedback
  control policy and otherwise it is called open loop control policy.
  More generally, we introduce the sets of open loop and feedback control
  policies on time $[t_k, T]$ for $0 \le k < K$ :
  \begin{align}
    \begin{split}
    \mathcal{U}_{o,k} =& \big\{(\nu_k, \nu_{k+1}, \cdots, \nu_{K-1}) ~|~ \nu_j \in
    \mathcal{A}\,,~k\le j < K\big\}\,, \\
    \mathcal{U}_{f,k} =& \big\{(\nu_k, \nu_{k+1}, \cdots, \nu_{K-1}) ~|~ \nu_j :
    \mathbb{X}_N \rightarrow \mathcal{A}\,,~k\le j < K\big\}\,.
  \end{split}
  \label{control-policy-set}
  \end{align}
  \textcolor{revisecolor}{Notice that in the feedback case, while each policy $\nu_j$ is a function of
  the state, the same notation will be used to denote its value (i.e.~the control selected
at $t_j$) when no ambiguity exists.}
  For further simplification, let $\sigma$ denote either `o' or `f' and we
  will write $\mathcal{U}_{\sigma, k}$ to refer to either open loop or
  feedback control policy set.

Given a control policy $u \in \mathcal{U}_{\sigma,k}$,  we express the
  corresponding controlled process in the time interval $[t_k, T]$ as $z^{u,N}(t)$,
  i.e.~the control $\nu_j$ is applied during time $t \in [t_j, t_{j+1})$, $k \le j < K$.
    The notation $z^{u,N}(t\,;\,z)$ will be used to emphasize that the
 process starts from a fixed initial state $z \in \mathbb{X}_N$ at time $t_k$ (the starting time may be nonzero).
Specifically, for a fixed control policy
$$u = (\nu_0, \nu_1, \nu_2, \cdots, \nu_{K-1}) \in \mathcal{U}_{\sigma, 0},$$
$z^{u,N}(t), t \ge 0$ is a Markov jump
process with the property that the
  probability for system's state to jump from $z^{u,N}(t)=z$ to $z + l$ within
the infinitesimal time interval $[t, t+ds)$ at $t \in [t_{j}, t_{j+1})$, is
    $f^{\nu_j,N}_{\textcolor{revisecolor}{d}}(z, l)\,ds$ for $l \in \mathbb{X}_N$.
That is, application of controls changes the jump rates of the Markov jump process. 
With the notation 
     \begin{align}
       j(t) := i\,, \quad \mbox{if}\, \quad t \in [t_{i}, t_{i+1})\,, 
	 \label{j-t}
    \end{align}
      we can denote the control policy which is applied to the process $z^{u,N}(t)$ at
      time $t$ as $\nu_{j(t)}$.
\textcolor{revisecolor}{Finally, 
the notation $z^{\nu,N}(t)$ will also be used,
when we emphasize that the current control policy at time $t$ is $\nu \in
\mathcal{A}$, 
or when we consider the controlled process on a single stage $[t_j,
  t_{j+1})$, in which case only the control policy $\nu$ applied at time
$t_j$ is relevant.}

\subsubsection*{Cost functional}

For a control policy $u = (\nu_0, \nu_1, \cdots,
\nu_{K-1}) \in \mathcal{U}_{\sigma, 0}$
and the process $z^{u, N}$, we define the cost functional
\begin{align}
  J_N(z, u) = \mathbf{E}_{z}^u \Big[\sum_{j=0}^{K-1}
    \Big(
r\big(z^{u,N}(t_j), \nu_j\big) +
    \int_{t_j}^{t_{j+1}}
  \phi\big(z^{u,N}(s), \nu_j\big)\, ds\Big) +
\psi\big(z^{u,N}(T)\big) \Big]\,
  \label{cost-j-general}
\end{align}
where $\mathbf{E}_{z}^u$ denotes the expectation over all realizations of $z^{u,N}$ starting at $z^{u,N}(0) = z$ and evolving under the control policy $u$.
We emphasize that, in the feedback case $u\in \mathcal{U}_{f,0}$, 
we have adopted the convention discussed before and 
$\nu_j$ in (\ref{cost-j-general}) should be interpreted as $\nu_j=\nu_j(z^{u,N}(t_j))$. 
Functions $r, \phi : \mathbb{R}^n \times \mathcal{A} \rightarrow
\mathbb{R}$ and $\psi : \mathbb{R}^n \rightarrow \mathbb{R}$ correspond to
the costs at each control stage $t_j$, the running cost and the terminal cost,
respectively.

\subsection{Limiting process and underlying assumptions}
\label{subsec-assumptions}

Our analysis in the course of the paper is based on
Kurtz's limit theorems for jump processes \cite{limit_thm_ode1971, ode_kurtz_limit,
Kurtz1978, kurtz_limit_thm_diffusion_approx}, which state that, for $u \in \mathcal{U}_{o, 0}$, the normalized density process
$z^{u,N}$ converges to a deterministic limiting process  $\tilde{z}^u$
\textcolor{revisecolor}{under certain assumptions} and is governed by the ordinary differential equation (ODE)
\begin{align}
  \frac{d\tilde{z}^u(t)}{dt} = F^{\nu_{j(t)}}(\tilde{z}^u(t)) \,,
  \label{limit-ode}
\end{align}
or, in integral form,
\begin{align}
  \tilde{z}^u(t) = \tilde{z}^u(0) + \int_0^t F^{\nu_{j(s)}}
  (\tilde{z}^{u}(s))\, ds\,.
  \label{integral-ode}
\end{align}
Here the vector field $F^{\nu}$ is defined as the limit of
\begin{align}
  F^{\nu, N}(z) = \sum_{l \in \mathbb{X}_N} l \, f^{\nu,
  N}_{\textcolor{revisecolor}{d}}(z, l)\,, \quad z \in \mathbb{X}_N\,,
  \label{F-nu-N-general}
\end{align}
as $N\to\infty$ (see Assumption~\ref{assump-2}), \textcolor{revisecolor}{and we have used the notation
$j(\cdot)$ which is defined in (\ref{j-t})}. Convergence of $z^{u,N}$ to $\tilde{z}^u$ will be established below in Theorem~\ref{thm-2}.

\subsubsection*{Limiting control value}

 We are interested in substituting the optimal control policy for the jump
 process with an optimal open loop control
 $u_{0} \in \mathcal{U}_{o,0}$  of the limiting process, such that

\begin{align}
  J_N(z, u_{0}) \approx U_N(z) \triangleq \inf_{u \in \mathcal{U}_{\sigma, 0}} J_N(z,u)\,,
  \label{target-fun}
\end{align}
i.e. the infimum (minimum) cost is approximated under the policy $u_{0}$.

The function $U_N$ is
called the \emph{value function} or \emph{control value} of the underlying stochastic control problem.
\textcolor{revisecolor}{It is known that an optimal control $u^{N}_{opt, \sigma}=\argmin_u J_N(z, u)$ exists when $\mathcal{A}$ is a finite set; see
 \cite{Puterman1994} for more details and possible relaxations of the
 assumptions on the set of admissible controls.}

For the related deterministic limiting process  $\tilde{z}^u$ satisfying (\ref{limit-ode}) under some open loop policy $u\in \mathcal{U}_{o, 0}$,
we define the cost functional by
  \begin{align}
    \widetilde{J}(z, u) = \sum_{j=0}^{K-1} \Big[r\big(\tilde{z}^u(t_j),
    \nu_j\big) + \int_{t_j}^{t_{j+1}}
    \phi\big(\tilde{z}^u(s), \nu_j\big)\, ds \Big] +
    \psi\big(\tilde{z}^u(T)\big)\,,
  \label{cost-j-ode}
  \end{align}
  and the corresponding value function $\widetilde{U}(z) = \inf_{u \in \mathcal{U}_{o,0}} \widetilde{J}(z,u)$.
  Note that when $\mathcal{A}$ is a finite set, the minimizer exists since the number of possible open loop control policies $u$ is finite and equal to
    $|\mathcal{A}|^{K}$, i.e.~$|\mathcal{U}_{o, 0}| = |\mathcal{A}|^{K}$. Convergence of the value function $U_{N}\to\widetilde{U}$ will be established in the course of the paper.

\subsubsection*{Standing assumptions}
\textcolor{revisecolor}{Let $\Omega$ be a fixed open subset of the space $\mathbb{R}^n$}. The subsequent analysis rests on the following assumptions :

\begin{assumption}
For some fixed $1 < \alpha \le 2$, we assume that
  \begin{align}
    M_{N, \textcolor{revisecolor}{\alpha}} := \sup_{\nu \in \mathcal{A}}
    \sup_{z \in \mathbb{X}_N \textcolor{revisecolor}{\cap \Omega}}
  \Big(\sum_{l \in \mathbb{X}_N}
  |l|^\alpha f^{\nu, N}_{\textcolor{revisecolor}{d}}(z, l)\Big) < \infty \,,
  \label{M-eps}
\end{align}
and satisfies
  \[
  \lim\limits_{N\rightarrow \infty}
  M_{N,\textcolor{revisecolor}{\alpha}}= 0\,.
  \]
  \label{assump-1}
\end{assumption}

\begin{assumption}
There exist functions $F^{\nu} : \Omega \rightarrow
\mathbb{R}^n$, such that
\begin{align}
  \omega_N := \sup\limits_{z \in \mathbb{X}_N \textcolor{revisecolor}{\cap \Omega},\, \nu \in \mathcal{A}} \big|F^{\nu,
  N}(z) - F^{\nu}(z)\big|
  \label{omega-f-diff}
\end{align}
satisfies
\[
 \lim\limits_{N\rightarrow \infty}
  \omega_N= 0\,.
  \]
  \label{assump-2}
\end{assumption}
\begin{assumption}
  There exists a constant $L_F \ge 0$, \textcolor{revisecolor}{which may depend on
  the subset $\Omega$}, such that
  \[|F^{\nu}(z') - F^{\nu}(z)| \le L_F |z' - z|\,, \quad\forall z, z' \in
    \textcolor{revisecolor}{\Omega}\,,\, \nu \in \mathcal{A}\,.
  \]
  \label{assump-3}
\end{assumption}

Finally, for the functions related to the cost functional (\ref{cost-j-general}) of the optimal control problem, we suppose
\begin{assumption}
  There exist constants $L_r, L_\phi, L_\psi, M_r, M_\phi ,  M_\psi \ge 0$,
  \textcolor{revisecolor}{which may depend on the subset $\Omega$}, such that
  \begin{align*}
    &
|r(z_1, \nu) - r(z_2, \nu)| \le L_r|z_1 - z_2|\,,\quad
     |\phi(z_1, \nu) - \phi(z_2, \nu)| \le L_\phi|z_1 - z_2| ,\\
& |\psi(z_1) - \psi(z_2)| \le L_\psi|z_1 - z_2|\,,
\end{align*}
$\forall z_1, z_2 \in \textcolor{revisecolor}{\Omega}\,,\; \nu \in \mathcal{A}$.
Moreover, $|r(z, \nu)| \le M_r$, $|\phi(z, \nu)| \le M_\phi$, $|\psi(z)| \le
M_\psi$, $\forall z \in \textcolor{revisecolor}{\mathbb{R}^n}\,,\, \nu \in \mathcal{A}$.
\label{assump-4}
\end{assumption}
\textcolor{revisecolor}{
\begin{remark}
  We make some remarks on the above assumptions.
  \begin{enumerate}
\item 
  Although the constants in Assumptions~\ref{assump-1}-\ref{assump-4} may
  depend on the subset $\Omega$, we will omit the dependence, since $\Omega$ is fixed throughout this paper.
    \item
  Instead of utilizing the jump rate function of the density jump process
  $z^{u,N}$, the quantity in Assumption~\ref{assump-1} can also be
  expressed in terms of the original jump process $x^{u,N}$. In fact, using
  the relation between the functions $f^{\nu, N}_{d}$ and $f^{\nu, N}_o$,  (\ref{M-eps}) is equivalent to
  \begin{align}
    M_{N, \alpha} = 
    N^{-\alpha}\,\sup_{\nu \in \mathcal{A}}
    \sup_{z \in \mathbb{X}_N \cap \Omega}
    \Big(\sum_{l \in \mathbb{X}}
  |l|^\alpha f^{\nu, N}_{o}(Nz, l)\Big) < \infty \,.
\end{align}
    \item
Assumption~\ref{assump-2} states that $F^{\nu, N}(z)$ converges to $F^{\nu}(z)$
  uniformly for all $\nu \in \mathcal{A}$ on the subset $\Omega$,
  while Assumption~\ref{assump-3} states that the family of the limiting vector fields $F^{\nu}(z)$ are (local) Lipschitz functions with Lipschitz constant
  $L_F$ on the set $\Omega$, uniformly for $\nu \in \mathcal{A}$.
  Similarly, Assumption~\ref{assump-4} assures that the functions $r, \phi, \psi$ are
  Lipschitz on $\Omega$ and are bounded on $\mathbb{R}^n$, uniformly for $\nu \in \mathcal{A}$.
  \end{enumerate}
  \label{rmk-assump}
\end{remark}
}

\subsection{Applications}
\label{sub-concrete}
Here we consider two prototypical examples of Markov jump processes, which appear relevant in the context of optimal control and to which our results can be applied.

\subsubsection*{Density dependent Markov chain}
\textcolor{revisecolor}{The first example is the \emph{density dependent Markov chain}
\cite{limit_thm_ode1971}, where the jump rates of the original process depend
on the density of the system's states.}
Specifically, following the notations of Subsection~\ref{sub-setup-mjp} and denoting the density
dependent Markov chain as $x^{\nu, N}(\cdot)$, it holds that the rate of
jumping from state $x$ to $x + l$ under the control $\nu \in \mathcal{A}$
is given by $f^{\nu,N}_{\textcolor{revisecolor}{o}}(x,l) = N{\textcolor{revisecolor}{\eta}}^{\nu}(x/N, l)$ for $x, l \in \mathbb{X}$,
\textcolor{revisecolor}{where $\eta^{\nu} : \mathbb{R}^n \times \mathbb{X} \rightarrow
\mathbb{R}^+$ is a function independent of $N$}. As a consequence,
$$f^{\nu, N}_{\textcolor{revisecolor}{d}}(z, l/N) =
f^{\nu,N}_{\textcolor{revisecolor}{o}}(Nz, l) = N {\textcolor{revisecolor}{\eta}}^{\nu}(z, l)$$ 
is the rate at which the normalized density process $z^{\nu,N}(\cdot) =
N^{-1}x^{\nu,N}(\cdot)$ jumps from $z = x/N$ to $z + l/N = (x + l)/N$.
Concrete models of density dependent Markov chains include the predator-prey model, elementary chemical reactions such as \ce{B + C <=> D} or epidemic
models \cite{limit_thm_ode1971, ode_kurtz_limit}.

Notice that if we assume 
\begin{align}
M_\alpha = \sup_{\nu \in \mathcal{A}}
\sup_{z \in \textcolor{revisecolor}{\Omega}}
   \Big(\sum_{l \in \mathbb{X}} |l|^{\alpha}
   {\textcolor{revisecolor}{\eta}}^{\nu}(z, l)\Big) < \infty\,,
  \label{M-alpha}
\end{align}
then Assumption~\ref{assump-1} holds, since $M_{N,\textcolor{revisecolor}{\alpha}} = N^{1-\alpha} M_\alpha$ with $\alpha > 1$.  Furthermore, if we define
  \begin{align}
    F^\nu(z) = \sum_{l \in \mathbb{X}} l\,{\textcolor{revisecolor}{\eta}}^{\nu}(z,l)\,,\quad  \forall z \in
    \mathbb{R}^n\,,
    \label{density-dep-f-nu}
  \end{align}
  then (\ref{F-nu-N-general}) becomes
  \begin{align*}
    F^{\nu, N}(z) =
     \sum_{l \in
     \mathbb{X}_N} l\,f^{\nu, N}_{\textcolor{revisecolor}{d}}(z, l) =
    \sum_{l \in
    \mathbb{X}} \frac{l}{N}\cdot N{\textcolor{revisecolor}{\eta}}^{\nu}(z, l) =
    F^{\nu}(z)\,, \quad z \in \mathbb{X}_N\,,
  \end{align*}
  \textcolor{revisecolor}{where the function $F^\nu(z)$ is independent of $N$}. This implies that Assumption~\ref{assump-2} trivially holds with $\omega_N
  \equiv 0$. \\

\subsubsection*{Chemical reactions}
As a second example, we mention systems of chemical reactions. Consider a reaction network
consisting of $n$ chemical species that can undergo $m$ different
chemical reactions:
\begin{align}
  \sum_{i = 1}^{n} v_{ki}\, S_i \xrightarrow{\kappa_k} \sum_{i = 1}^{n}
  v_{ki}'\, S_i\,, \quad
  k = 1, \cdots, m\,.
  \label{reaction-network}
\end{align}
Here $S_i$ are the different chemical species, $\kappa_k$ is the rate constant
of the $k$-th reaction, $v_{ki}$, $v_{ki}'$ are the
molecule numbers of species $S_i$ consumed or generated when the $k$-th
reaction fires. Now let $x^{(i)}(t)$ be the number of molecules of species $S_i$ at time $t$ and define
\begin{align}
  x(t) = \big(x^{(1)}(t), x^{(2)}(t), \cdots, x^{(n)}(t)\big)^T \in \mathbb{N}^n\,
\end{align}
to be the state of the chemical system at time $t$. When the $k$-th reaction
fires at time $t > 0$, the system's state
jumps from $x(t)$ to $x(t) + (v'_k - v_k)$ where
\begin{align}
   v_k = (v_{k1}, v_{k2}, \cdots, v_{kn})^T \in \mathbb{N}^n, \quad v'_k = (v'_{k1}, v'_{k2}, \cdots, v'_{kn})^T \in \mathbb{N}^n\,.
\end{align}
In order to fully describe the system as a Markov jump process, we
still need to specify the Poisson intensity of each reaction (propensity
function). Let $\lambda$ denote a generic propensity function. For simplicity, we
will restrict ourself to \textcolor{revisecolor}{at most binary} reactions, which
\textcolor{revisecolor}{consume} at most two molecules :
\begin{enumerate}
  \item
    \ce{$\emptyset$ ->[\kappa] {\color{revisecolor}product}}\,, \;  $\lambda =
     \kappa N$\,
  \item
    \ce{$S_i$ ->[\kappa] {\color{revisecolor}product}}\,, \;  $\lambda = \kappa
    x^{(i)}$\,
  \item
    \ce{2$S_i$ ->[\kappa] {\color{revisecolor}product}}\,, \; $\lambda =
    \frac{\kappa}{N} x^{(i)}(x^{(i)} - 1)$\,
  \item
    \ce{$S_i$ + $S_j$ ->[\kappa] {\color{revisecolor}product}}\,, \;$\lambda =
    \frac{\kappa}{N} x^{(i)}x^{(j)}$\,,
\end{enumerate}
where $x = (x^{(1)}, \cdots, x^{(n)})$ is the system's state and $N$ is a constant
related to the volume of the system (e.g., the total number of molecules or a test tube volume).
In the above reactions $1-4$, $\kappa$ is a constant of order one and
\textcolor{revisecolor}{the scaling of $\lambda$ with respect to $N$ corresponds to the ``classical scaling'' considered in~\cite{kang2013,ball06}. We also refer to \cite{ssck_gillespie} for
further discussions on the propensity functions.}
Note that, in general the propensity function is a function of the system state. 

For the reaction network described in (\ref{reaction-network}), denoting the
propensity functions $\lambda_k(x)$ when the system is at state $x$, then the dynamics of $x(t)$ can be written as
\begin{align}
  x(t) = x(0) + \sum_{k=1}^{m} (v_k' - v_k)\, Y_k\left(\int_0^t
  \lambda_k\big(x(s)\big)\,  ds\right)
  \label{poisson-rep}
\end{align}
where $Y_k(\cdot), 1 \le k \le m$ are independent Poisson processes with
unit intensity. For the system of controlled chemical reactions, we use the
notation $\lambda^{\nu,N}_{k}(x)$ to indicate that the propensities not only
depend on $N$, but also on the control $\nu\in \mathcal{A}$ via the rate
constants $\kappa=\kappa_k(\nu)$. From the definition of the reaction events,
it is clear that the jump rates introduced before and the propensity functions are related by
\[
  f^{\nu,\textcolor{revisecolor}{N}}_{\textcolor{revisecolor}{o}}(x, l) = \sum_{\substack{1
  \le k \le m\,,\\  v_k' - v_k=l}}\lambda_k^{\nu,N}(x) \,.
 \]
Notice that if only reactions of type $1$, $2$ or $4$ are involved, the process
defined by $f^{\nu,\textcolor{revisecolor}{N}}_{\textcolor{revisecolor}{o}}$ is an instance of the aforementioned
density dependent Markov chain; when reactions of type $3$ are involved, then the
limiting vector field $F^\nu$ can be computed from $F^{\nu,N}$ in
\textcolor{revisecolor}{(\ref{F-nu-N-general})} by exploiting that
$f^{\nu, N}_{\textcolor{revisecolor}{d}}(z, l) = N\kappa z^{(i)}(z^{(i)}- N^{-1})$ for $z, l
\in \mathbb{X}_N$, where
$\kappa = \kappa_k(\nu)$, if $\textcolor{revisecolor}{N}l = v_k' - v_k$ for some $1
\le k \le m$ (for simplicity suppose only one such index $k$ exists) and $f^{\nu, N}_{\textcolor{revisecolor}{d}}(z, l)=0$ otherwise.

\section{Asymptotic analysis of the optimal control problem}
\label{sec-analysis-ocp}
In this section, we study optimal control problems
in the large number regime based on Kurtz's limit theorem \cite{limit_thm_ode1971, ode_kurtz_limit,
Kurtz1978, kurtz_limit_thm_diffusion_approx}.

As a first step, given an open loop control $u \in \mathcal{U}_{o,0}$, we establish the approximation result of the Markov jump
process $z^{u,N}$ by the ODE limit~(\ref{limit-ode}). \textcolor{revisecolor}{The proof is adapted
from Kurtz's argument, especially~\cite{limit_thm_ode1971}. However, for completeness we feel
it is necessary to present the proof in detail. } 
As a second step, we confine our attention to the open loop control problem
which is a direct application of Kurtz's theorem, given that
the Assumptions in Subsection~\ref{subsec-assumptions} hold. Specifically, we show
that $J_N(z_N,u) \to \widetilde{J}(z_0, u)$ for $u \in \mathcal{U}_{o,0}, z_N
\in \mathbb{X}_N, z_N \rightarrow z_0 \in \Omega$ as $N \to \infty$ (Theorem~\ref{thm-3}). Then, as a
third step, we consider the feedback control problem and prove that $U_N(z_N) \to
\widetilde{U}(z)$ if $z_N \rightarrow z$,
and, especially, if $u_0 \in \mathcal{U}_{o,0}$ and $\widetilde{J}(z,u_0) =
\widetilde{U}(z)$, then $|J_N(z_N, u_0) - U_N(z_N)| \to 0$ as $N\to\infty$ (Theorem~\ref{thm-4}).
As we will discuss in detail, an important consequence of Theorem~\ref{thm-4} is that the optimal (open
loop) control policy for the limiting ODE system is almost optimal for the
Markov jump process if $N \gg 1$, i.e., it  is asymptotically optimal among
all \textit{feedback} control policies in $\mathcal{U}_{f, 0}$. Finally, we extend the analysis of the finite time-horizon case to
   discounted optimal control problems on an infinite time-horizon (Theorem~\ref{thm-5}).

\subsection{ODE approximation of the normalized Markov jump process}
\label{sub-ode-appro}
Let $u \in \mathcal{U}_{o, 0}$ be some open loop control policy and $z^{u, N}(t)=N^{-1}x^{u,N}(t)$ denote the
normalized density Markov jump process. Recall that $\Omega$ is the open subset of $\mathbb{R}^n$ introduced in Subsection~\ref{subsec-assumptions}.
The convergence of the normalized density process as $N\to\infty$ is described by the following theorem.
\begin{theorem}
  Let $z^{u, N}(t)$ be the normalized density jump process under the open loop policy $u \in
  \mathcal{U}_{o, 0}$ and 
  \textcolor{revisecolor}{
  suppose the ODE (\ref{limit-ode}) has a unique solution $\tilde{z}^u(t)$ 
  on $t \in [0, T]$ starting from $z_0 \in \Omega$. 
  Furthermore, $\exists \gamma>0$, s.t. 
  \begin{align}
    \Omega^u_{\gamma,z_0, [0, T]} := \Big\{z' \in \mathbb{R}^n~\Big|~\inf_{0 \le t \le
    T}|z'-\tilde{z}^u(t)| \le \gamma\Big\} \subseteq \Omega \,.
    \label{omega-gamma-set}
  \end{align}
  Let $\tau^u_N$ be the stopping time for the jump process
    $z^{u,N}$ to leave the set $\Omega^u_{\gamma, z_0,
  [0,T]}$, i.e.  
\begin{align}
  \tau^{u}_N := \inf_{s \ge 0} \Big\{s\,\big|~z^{u,N}(s) \not\in\,
  \Omega^u_{\gamma, z_0, [0,T]}\Big\}\,.
  \label{tau-u-n}
\end{align}
}
\begin{enumerate}
  \item
Suppose Assumption~\ref{assump-3} holds. We have 
\begin{align}
  \mathbf{E}^u\!\left[\sup_{0 \le s \le t \textcolor{revisecolor}{\wedge \tau^{u}_N}} \big|z^{u, N}(s) - \tilde{z}^u(s)\big|\right] \le
  \Big[\mathbf{E}\big|z^{u, N}(0) - z_0\big| + C_{T,N} \Big]\,e^{L_F t}\,,
\label{thm-2-result-1}
\end{align}
for $0 \le t \le T$, where the constant 
\begin{align}
  C_{T,N} = T\omega_N + \frac{\alpha}{2(\alpha -
  1)}\Big(\frac{4TM_{N,\textcolor{revisecolor}{\alpha}}}{\alpha - 1}\Big)^{\frac{1}{\alpha}}\,,
  \label{c-t-n}
\end{align}
with $\alpha \in (1,2]$, and $\omega_N$, $M_{N,\textcolor{revisecolor}{\alpha}}$ are defined in
(\ref{omega-f-diff}) and (\ref{M-eps}), respectively. 
\item
  Suppose Assumptions~\ref{assump-1}-\ref{assump-3} are satisfied
\textcolor{revisecolor}{with constant $\alpha \in (1,2]$} and that $$\lim\limits_{N\rightarrow \infty} \mathbf{E}|z^{u, N}(0) - z_0|
  =0.$$ Then for any control policy $u \in \mathcal{U}_{o,0}$, we have 
\begin{align}
  \lim_{N\rightarrow\infty} \mathbf{E}^u\!\left[\sup_{0 \le s \le t
    \textcolor{revisecolor}{\wedge \tau^u_N}} |z^{u, N}(s) -
\tilde{z}^u(s)| \right] = 0 \,.
\label{thm-2-result-2}
\end{align}

\textcolor{revisecolor}{Furthermore, let $\rho > 0$ be given such that $\rho \le \frac{1}{3} \gamma e^{-L_F T}$. Then $\exists N_0
> 0$ which may depend on $\rho$, such that 
\begin{align}
  \mathbf{P}(\tau^u_N < T) \le
  \rho^{-1}\bigg[\mathbf{E}|z^{u,N}(0)-z_0| + 
\frac{\alpha}{2(\alpha -
  1)}\Big(\frac{4TM_{N,\textcolor{revisecolor}{\alpha}}}{\alpha -
1}\Big)^{\frac{1}{\alpha}}\bigg]\,,
\label{thm-2-result-3}
\end{align}
whenever $N \ge N_0$, where $\mathbf{P}$ is the probability with respect to the process $z^{u,N}$ under the control $u$.
Especially, we have 
\begin{align*}
  \lim_{N\rightarrow\infty} \mathbf{P} (\tau^u_N < T) = 0\,.
\end{align*}
}
  \end{enumerate}
\label{thm-2}
\end{theorem}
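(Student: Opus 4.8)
The plan is to represent $z^{u,N}$ through its Dynkin (martingale) decomposition and compare it with the integral form~(\ref{integral-ode}) of the ODE by a Gronwall argument, using the stopping time $\tau^u_N$ to guarantee that the path remains inside $\Omega$, where Assumptions~\ref{assump-1}--\ref{assump-3} are available. Applying Dynkin's formula to the coordinate map, one obtains
\begin{align*}
  z^{u,N}(t) = z^{u,N}(0) + \int_0^t F^{\nu_{j(s)},N}\big(z^{u,N}(s)\big)\, ds + M^{u,N}(t)\,,
\end{align*}
where $F^{\nu,N}$ is the discrete vector field~(\ref{F-nu-N-general}) and $M^{u,N}$ is the vector-valued martingale given by the compensated sum of jumps, with jumps of size $l\in\mathbb{X}_N$ occurring at rate $f^{\nu,N}_d(z,l)$. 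Subtracting~(\ref{integral-ode}) and splitting the drift difference for a generic $\nu$ as
\begin{align*}
  F^{\nu,N}\big(z^{u,N}(s)\big) - F^{\nu}\big(\tilde{z}^u(s)\big)
  &= \big[F^{\nu,N} - F^{\nu}\big]\big(z^{u,N}(s)\big) \\
  &\quad + \big[F^{\nu}\big(z^{u,N}(s)\big) - F^{\nu}\big(\tilde{z}^u(s)\big)\big]\,,
\end{align*}
the first bracket is controlled by $\omega_N$ through Assumption~\ref{assump-2} and the second by $L_F|z^{u,N}(s)-\tilde{z}^u(s)|$ through Assumption~\ref{assump-3}, provided $z^{u,N}(s)\in\Omega$; this is exactly what stopping at $\tau^u_N$ guarantees, since for $s<\tau^u_N$ the path lies in the tube $\Omega^u_{\gamma,z_0,[0,T]}\subseteq\Omega$.

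For Part~1 I would set $D(t)=\sup_{0\le s\le t\wedge\tau^u_N}|z^{u,N}(s)-\tilde{z}^u(s)|$ and, taking the supremum over $[0,t\wedge\tau^u_N]$ in the difference of the two integral equations, arrive at the pathwise estimate
\begin{align*}
  D(t) \le |z^{u,N}(0)-z_0| + T\omega_N + \sup_{0\le s\le t\wedge\tau^u_N}\big|M^{u,N}(s)\big| + L_F\int_0^t D(s)\, ds\,.
\end{align*}
Taking expectations and invoking Gronwall's inequality yields~(\ref{thm-2-result-1}), once the martingale term is estimated. The maximal inequality
\begin{align*}
  \mathbf{E}^u\!\sup_{0\le s\le t\wedge\tau^u_N}\big|M^{u,N}(s)\big|
  \le \frac{\alpha}{2(\alpha-1)}\Big(\frac{4TM_{N,\alpha}}{\alpha-1}\Big)^{\frac{1}{\alpha}}
\end{align*}
is the step I expect to be the main obstacle, since it produces precisely the second summand of $C_{T,N}$ in~(\ref{c-t-n}) and demands a Burkholder--Davis--Gundy / von~Bahr--Esseen type bound valid for $1<\alpha\le 2$. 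I would obtain it by combining Doob's $L^\alpha$ inequality with a control of $\mathbf{E}^u|M^{u,N}(t\wedge\tau^u_N)|^\alpha$ by the $\alpha$-th order compensator $\mathbf{E}^u\!\int_0^{t\wedge\tau^u_N}\sum_{l}|l|^\alpha f^{\nu_{j(s)},N}_d\big(z^{u,N}(s),l\big)\, ds$, which Assumption~\ref{assump-1} caps by $t\,M_{N,\alpha}$ because the stopped path stays in $\Omega$; the delicate part is tracking the constants, which I would isolate as the technical lemma of Appendix~\ref{app-1}.

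Part~2 then follows quickly. The convergence~(\ref{thm-2-result-2}) is immediate from~(\ref{thm-2-result-1}): under Assumptions~\ref{assump-1}--\ref{assump-3} one has $\omega_N\to 0$ and $M_{N,\alpha}\to 0$, so $C_{T,N}\to 0$, and together with $\mathbf{E}|z^{u,N}(0)-z_0|\to 0$ the right-hand side vanishes. For the probability bound~(\ref{thm-2-result-3}) I would instead apply Gronwall pathwise to the displayed estimate with $t=T$, giving
\begin{align*}
  \sup_{0\le s\le T\wedge\tau^u_N}\big|z^{u,N}(s)-\tilde{z}^u(s)\big|
  \le \Big[\,|z^{u,N}(0)-z_0| + T\omega_N + \sup_{0\le s\le T\wedge\tau^u_N}\big|M^{u,N}(s)\big|\,\Big]e^{L_FT}\,.
\end{align*}
On the event $\{\tau^u_N<T\}$ the path exits the tube at $\tau^u_N\le T$, so by the definition of $\Omega^u_{\gamma,z_0,[0,T]}$ one has $|z^{u,N}(\tau^u_N)-\tilde{z}^u(\tau^u_N)|\ge\gamma$ and hence the left-hand side is $\ge\gamma$. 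Using $\gamma e^{-L_FT}\ge 3\rho$ and distributing the three summands with weight $\rho$ each, the event $\{\tau^u_N<T\}$ is contained in $\{|z^{u,N}(0)-z_0|\ge\rho\}\cup\{T\omega_N\ge\rho\}\cup\{\sup_{s}|M^{u,N}(s)|\ge\rho\}$. I would then choose $N_0$ so that $T\omega_N<\rho$ for all $N\ge N_0$ --- possible because $\omega_N$ is deterministic with $\omega_N\to 0$ --- which empties the middle event; applying Markov's inequality to the remaining two events together with the martingale bound above gives~(\ref{thm-2-result-3}), and letting $N\to\infty$ for fixed $\rho$ yields $\mathbf{P}(\tau^u_N<T)\to 0$.
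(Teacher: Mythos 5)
Your proposal is correct and follows essentially the same route as the paper's proof: the same martingale (Dynkin) decomposition with the drift split into an $\omega_N$-term and a Lipschitz term, a pathwise Gronwall estimate up to the stopping time, the $L^\alpha$ maximal bound for the compensated jump martingale obtained from Doob's inequality together with the $\alpha$-th order compensator (whose constant-tracking is exactly the content of the paper's Lemma~\ref{lemma-1} in Appendix~\ref{app-1}), and for (\ref{thm-2-result-3}) the same union-bound-plus-Chebyshev argument over the three summands after choosing $N_0$ so that $T\omega_N<\rho$. The only cosmetic difference is that you state the exit-time inclusion directly (if $\tau^u_N<T$ then the supremum exceeds $\gamma$, forcing one summand to exceed $\rho$) while the paper argues the contrapositive; these are logically identical.
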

\begin{proof}
  \begin{enumerate}
    \item
Let $w^{u,N}$ be the martingale 
\begin{align}
  w^{u, N}(t) = z^{u, N}(t) - z^{u, N}(0)
- \int_0^t F^{\nu_{j(s)}, N}(z^{u, N}(s))\, ds \,,
\label{martingale-w}
\end{align}
and consider the coupled Markov process $(z^{u, N}(t), w^{u, N}(t))$. For a differentiable function
$\varphi$ of $w$, Dynkin's formula~\cite{markov_process_dynkin, oksendalSDE} entails
\begin{align*}
  & \mathbf{E}^u\!\left[\varphi\big(w^{u, N}(t \textcolor{revisecolor}{\wedge \tau^u_N})\big)\right] - \mathbf{E}^u\!\left[\varphi\big(w^{u,
  N}(0)\big)\right] \\
  = &
  \mathbf{E}^u \bigg\{\int_0^{t \textcolor{revisecolor}{\wedge \tau^u_N}}\Big[
  \sum_{l \in \mathbb{X}_N} \Big(\varphi\big(l + w^{u,
  N}(s)\big) - \varphi\big(w^{u, N}(s)\big) -l \cdot \nabla \varphi\big(w^{u,
  N}(s)\big) \Big)\\
  & \quad \times f^{\nu_{j(s)}, N}_{\textcolor{revisecolor}{d}}(z^{u, N}(s), l)
  \Big]\, ds\bigg\}\,.
\end{align*}
In particular, setting $\varphi(z) = |z|^\alpha$, \textcolor{revisecolor}{where $\alpha \in (1,2]$ is
the constant in Assumption~\ref{assump-1}}, and using Lemma~\ref{lemma-1} from Appendix~\ref{app-1}, we obtain
\begin{align*}
  & \mathbf{E}^u\big|w^{u, N}(t \textcolor{revisecolor}{\wedge \tau^u_N})\big|^\alpha \le
  \frac{4 t}{2^\alpha(\alpha - 1)} \sup_{\nu \in \mathcal{A}}
  \sup_{z \in \mathbb{X}_N \textcolor{revisecolor}{\cap \Omega}}
  \Big(\sum_{l \in \mathbb{X}_N} |l|^\alpha f^{\nu,
  N}_{\textcolor{revisecolor}{d}}(z, l)\Big) = \frac{4tM_{N,\textcolor{revisecolor}{\alpha}}}{2^\alpha(\alpha-1)}  \,,
\end{align*}
which, by H\"older's inequality and Doob's maximal inequality, implies that
\begin{equation}  \label{w-ineq}
\begin{aligned}
  \mathbf{E}^u\!\left[\sup_{0 \le s \le t} \big|w^{u, N}(s \textcolor{revisecolor}{\wedge
  \tau^u_N})\big|\right] & \le \!\left[\mathbf{E}^u\big(\sup_{0 \le s \le t}
      \big|w^{u, N}(s \textcolor{revisecolor}{\wedge \tau^u_N})\big|^\alpha\big)\right]^{\frac{1}{\alpha}}\\
  & \le \frac{\alpha}{\alpha - 1}
\Big[\mathbf{E}^u\,\big|w^{u, N}(t\textcolor{revisecolor}{\wedge \tau^u_N})\big|^\alpha\Big]^{\frac{1}{\alpha}}\\
  & \le \frac{\alpha}{2(\alpha - 1)}\left(\frac{4 t
M_{N,\textcolor{revisecolor}{\alpha}}}{\alpha -
  1}\right)^{\frac{1}{\alpha}} \,.
\end{aligned}
\end{equation}
Combining (\ref{martingale-w}) and (\ref{integral-ode}) and taking Assumption~\ref{assump-3} into consideration, it follows that
\begin{align*}
  & \big|z^{u, N}(t\textcolor{revisecolor}{\wedge \tau^u_N}) -
  \tilde{z}^u(t\textcolor{revisecolor}{\wedge \tau^u_N})\big| \\
   \le &
\big|z^{u, N}(0) - z_0\big| +
L_F\int_0^{t \textcolor{revisecolor}{\wedge \tau^u_N}} \big|z^{u, N}(s) - \tilde{z}^u(s)\big| ds \\
&+ \int_0^{t\textcolor{revisecolor}{\wedge \tau^u_N}}
  \big|F^{\nu_{j(s)}, N}\big(z^{u, N}(s)\big) - F^{\nu_{j(s)}}\big(z^{u,
  N}(s)\big)\big| ds
   + \big|w^{u, N}(t\textcolor{revisecolor}{\wedge \tau^u_N})\big|\\
   \le &
\big|z^{u, N}(0) - z_0\big| + L_F\int_0^{t\textcolor{revisecolor}{\wedge \tau^u_N}} \big|z^{u, N}(s) - \tilde{z}^u(s)\big| ds
   + t \,\omega_N + \big|w^{u, N}(t\textcolor{revisecolor}{\wedge \tau^u_N})\big|\,.
\end{align*}
Now let $y^{u, N}(t) = \sup\limits_{0 \le s \le t\textcolor{revisecolor}{\wedge \tau^u_N}} \big|z^{u, N}(s) - \tilde{z}^u(s)\big|$. Then
\begin{align*}
  y^{u, N}(t) \le
  y^{u, N}(0) +
  L_F \int_0^t y^{u, N}(s) \,ds + T\omega_N + \sup_{0 \le s \le T} \big|w^{u,N}(s\textcolor{revisecolor}{\wedge \tau^u_N})\big|\,,
\end{align*}
and Gronwall's inequality implies
\begin{align}
  y^{u, N}(t) \le \left[ y^{u, N}(0) + T\omega_N + \sup_{0 \le s \le T} \big|w^{u,N}(s\textcolor{revisecolor}{\wedge \tau^u_N})\big|\right] e^{L_F t}\,.
  \label{thm-2-estimate-y}
\end{align}
The estimate (\ref{thm-2-result-1}) follows by taking expectations on both
sides of the above inequality and using (\ref{w-ineq}). 
\item
  {\color{revisecolor} The assertion (\ref{thm-2-result-2}) follows directly from
    (\ref{thm-2-result-1}) by taking the limit $N\to\infty$ and applying
    Assumptions~\ref{assump-1} and \ref{assump-2}.
  To prove the assertion (\ref{thm-2-result-3}), we first choose $N_0 > 0$ such that
  $T\omega_N \le \rho$ whenever $N > N_0$. This is possible due to Assumption~\ref{assump-2}.

  From the definitions of $y^{u,N}(t)$, the subset $Q^u_{\gamma, z_0, [0,T]}$ in
  (\ref{omega-gamma-set}) and the inequality (\ref{thm-2-estimate-y}), we can
  deduce that
  \begin{align*}
    & y^{u,N}(0) \le \rho \quad \mbox{and}\,\quad \sup_{0 \le s \le T}
    |w^{u,N}(s\wedge \tau^u_N)| \le \rho \\
    \Longrightarrow & \quad y^{u,N}(T) \le 3\rho e^{L_F T} \le \gamma 
    \quad \Longrightarrow \quad \tau^u_N \ge T\,,
  \end{align*}
  and therefore 
  \begin{align*}
    \mathbf{P}(\tau^u_N < T) \le &\, \mathbf{P}\big(y^{u,N}(0) > \rho \big) +
    \mathbf{P}\Big(\sup_{0 \le s \le T} |w^{u,N}(s\wedge \tau^u_N)| >\rho \Big)
    \\
    \le&\, \rho^{-1}\bigg[\mathbf{E}|z^{u,N}(0)-z_0| + 
\frac{\alpha}{2(\alpha -
  1)}\Big(\frac{4TM_{N,\alpha}}{\alpha -
1}\Big)^{\frac{1}{\alpha}}\bigg]\,,
  \end{align*}
  where we have used the fact that $y^{u,N}(0) = |z^{u,N}(0) - z_0|$, the
  inequality (\ref{w-ineq}) and the Chebyshev's inequality.
}
\end{enumerate}
\end{proof}

We conclude this subsection with the following remarks.

\begin{remark}
 From the proof, it is straightforward to see that, when $z^{u,N}(0)$ is
 deterministic and $|z^{u,N}(0) - z_0| \le \rho \le \frac{1}{3} \gamma
 e^{-L_FT}$, estimate (\ref{thm-2-result-3}) can be improved as
\begin{align}
  \mathbf{P}(\tau^u_N < T) \le
  \rho^{-1}\bigg[\frac{\alpha}{2(\alpha -
  1)}\Big(\frac{4TM_{N,\textcolor{revisecolor}{\alpha}}}{\alpha -
1}\Big)^{\frac{1}{\alpha}}\bigg] \le \rho^{-1} C_{T,N}\,.
\label{thm-2-result-3-improved}
\end{align}
\label{rmk-kurtz-1}
\end{remark}

  \begin{remark}
  \label{rmk-kurtz-density-depend}
For the density dependent Markov chain introduced in Subsection~\ref{sub-concrete}, it
holds that $\omega_{N}=0$ and
$M_{N,\textcolor{revisecolor}{\alpha}} = N^{1-\alpha} M_\alpha$, where $M_\alpha$ is
given in (\ref{M-alpha}) with $\alpha \in (1,2]$.
Therefore, the constant in (\ref{c-t-n}) satisfies
  \begin{align}
    C_{T,N} = \mathcal{O}\left(N^{\frac{1}{\alpha} - 1}\right)\,.
 \label{const-n-order}
  \end{align}
  \textcolor{revisecolor}{
    Assuming $\mathbf{E}|z^{u,N}(0) - z_0|\rightarrow 0$ fast enough as $N\rightarrow \infty$,
the above implies that the convergence speed in both (\ref{thm-2-result-1}) and
(\ref{thm-2-result-3}) is explicitly of order $N^{\frac{1}{\alpha} - 1}$.}

  \textcolor{revisecolor}{The simplest case is when $z^{u,N}$ is a one-dimensional
    process and the control set $\mathcal{A}$ is a singleton}. For simplicity, we will omit the control $u$ in the notations in
  the remainder of this paragraph. Suppose that $\textcolor{revisecolor}{\eta}(z, 1)
  = 1$ and $\textcolor{revisecolor}{\eta}(z,l) = 0$
for $l \neq 1$, $z \ge 0$. Then 
(\ref{density-dep-f-nu}) implies that $F(z) \equiv 1$, which is Lipschitz continuous with Lipschitz constant $L_F = 0$. For the initial value $z_0 = 0$, equation
    (\ref{integral-ode}) yields $\tilde{z}(t) = t$ and $z^{N}(t) =
    N^{-1} P(Nt)$, where $P(\cdot)$ is a Poisson process with unit intensity.
    \textcolor{revisecolor}{We can also choose the subsets $\Omega_{\gamma, 0, [0,T]}=\Omega=\mathbb{R}^n$.}
    Further note that Assumption~\ref{assump-1} holds with $\alpha = 2$ and $M_{\alpha} = 1$, so that Theorem~\ref{thm-2} entails
  \begin{align*}
      \mathbf{E}\!\left[\sup_{0 \le s \le T} \Big|\frac{P(Ns)}{N} - s\Big|\right] \le
    \left(\frac{4T}{N}\right)^{1/2} \,.
  \end{align*}
\end{remark}

\subsection{Optimal control on finite time-horizon}
\label{sub-finite-time}
In this subsection, we apply the previous approximation result to study both open
and closed loop optimal control on a finite time-horizon.

\subsubsection*{Open loop control}
As a straight consequence of Theorem~\ref{thm-2} and
Assumptions~\ref{assump-3}--\ref{assump-4}, we have the following result
for the open loop control problem.
  \begin{theorem}
    Suppose that Assumptions~\ref{assump-1}-\ref{assump-4} hold true.
      Let $z_0 \in \Omega$ and $u \in \mathcal{U}_{o, 0}$ be any open loop control policy of the form $u = (\nu_0, \nu_1, \cdots, \nu_{K-1})$
    with $\nu_{j} \in \mathcal{A}$, $0 \le j < K$. 
    Suppose the ODE (\ref{limit-ode}) has a unique solution on $[0, T]$ and 
    furthermore the condition (\ref{omega-gamma-set}) is satisfied for some
    $\gamma > 0$.  Recall that the cost functionals $J_N$ and $\widetilde{J}$ are defined in (\ref{cost-j-general}),
    (\ref{cost-j-ode}), respectively. 
  Let $z_N \in \mathbb{X}_N \textcolor{revisecolor}{\cap \Omega}$ and $z_N
  \rightarrow z_0$ as $N\rightarrow +\infty$.  Then $\exists N_0 > 0$, s.t. for $N > N_0$, we have 
\begin{align}
\begin{split}
  \big|J_N(z_N, u) - \widetilde{J}(z_0, u)\big| 
  \le &
  \Big(|z_N - z_0| + C_{T,N}\Big)
  \Big[L_\phi \frac{e^{L_F T} - 1}{L_F}
  +\big(KL_r + L_\psi + \textcolor{revisecolor}{ \overline{M}}\big) e^{L_FT} \Big] \,,
  \label{cost-approx-thm-3}
\end{split}
\end{align}
with the convention
$\frac{e^{L_F T} - 1}{L_F}=T$,if $L_{F}=0$,
and the constant 
\begin{align}
  \textcolor{revisecolor}{ \overline{M} := 6 \gamma^{-1} \big(KM_r + TM_\phi + M_\psi\big)\,.}
  \label{m-overline}
\end{align}
The constant $C_{T,N}$ is defined in (\ref{c-t-n}) and the other constants are given in Assumptions~\ref{assump-3}--\ref{assump-4}.
Especially, when the condition (\ref{omega-gamma-set}) is satisfied for all $u \in
\mathcal{U}_{o,0}$ for some common $\gamma>0$, we have
\begin{align}
  \lim_{N\rightarrow \infty} |J_N(z_N, u) - \widetilde{J}(z_0, u)| = 0\,,
\end{align}
 uniformly for all control policies $u\in \mathcal{U}_{o,0}$.
\label{thm-3}
\end{theorem}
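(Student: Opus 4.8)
The plan is to estimate $J_N(z_N,u)-\widetilde{J}(z_0,u)$ term by term, exploiting Theorem~\ref{thm-2} in two complementary ways: on the event where the jump process never leaves the tube $\Omega^u_{\gamma,z_0,[0,T]}$ I will use the pathwise convergence estimate (\ref{thm-2-result-1}), while on the escape event $\{\tau^u_N<T\}$ I will discard the state-dependence and rely only on the uniform bounds $|r|\le M_r$, $|\phi|\le M_\phi$, $|\psi|\le M_\psi$ from Assumption~\ref{assump-4}, controlling its probability via (\ref{thm-2-result-3}). Since the process starts deterministically at $z_N$, I have $\mathbf{E}|z^{u,N}(0)-z_0|=|z_N-z_0|$. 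First I would write the difference of the two cost functionals (\ref{cost-j-general}) and (\ref{cost-j-ode}) as
\begin{align*}
J_N(z_N,u) - \widetilde{J}(z_0,u)
={}& \sum_{j=0}^{K-1}\Big(\mathbf{E}^u[r(z^{u,N}(t_j),\nu_j)] - r(\tilde{z}^u(t_j),\nu_j)\Big) \\
&+ \sum_{j=0}^{K-1}\mathbf{E}^u\!\int_{t_j}^{t_{j+1}}\!\Big(\phi(z^{u,N}(s),\nu_j) - \phi(\tilde{z}^u(s),\nu_j)\Big)\,ds \\
&+ \mathbf{E}^u[\psi(z^{u,N}(T))] - \psi(\tilde{z}^u(T))\,,
\end{align*}
and then split each summand over $\{\tau^u_N\ge T\}$ and $\{\tau^u_N<T\}$.

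On $\{\tau^u_N\ge T\}$ the whole trajectory up to time $T$ stays in $\Omega^u_{\gamma,z_0,[0,T]}\subseteq\Omega$, so $s\wedge\tau^u_N=s$ for $s\le T$ and the Lipschitz bounds of Assumption~\ref{assump-4} apply. Writing $y^{u,N}(t)=\sup_{0\le s\le t\wedge\tau^u_N}|z^{u,N}(s)-\tilde{z}^u(s)|$ as in the proof of Theorem~\ref{thm-2}, the pointwise inequality $|z^{u,N}(s)-\tilde{z}^u(s)|\,\mathbf{1}_{\{\tau^u_N\ge T\}}\le y^{u,N}(s)$ combined with (\ref{thm-2-result-1}) gives $\mathbf{E}^u[y^{u,N}(t)]\le(|z_N-z_0|+C_{T,N})e^{L_Ft}$. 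This bounds each of the $K$ cost-stage terms by $L_r(|z_N-z_0|+C_{T,N})e^{L_FT}$ and the terminal term by $L_\psi(|z_N-z_0|+C_{T,N})e^{L_FT}$; for the running cost, Fubini and $\sum_j\int_{t_j}^{t_{j+1}}e^{L_Fs}\,ds=\int_0^Te^{L_Fs}\,ds=\frac{e^{L_FT}-1}{L_F}$ yield the contribution $L_\phi(|z_N-z_0|+C_{T,N})\frac{e^{L_FT}-1}{L_F}$. Together these reproduce exactly the first bracketed term of (\ref{cost-approx-thm-3}).

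On $\{\tau^u_N<T\}$ the crude two-sided bound $|{\rm cost}|\le 2M$ shows that the total contribution of all three groups of terms is at most $2(KM_r+TM_\phi+M_\psi)\,\mathbf{P}(\tau^u_N<T)$. The key bookkeeping step is to invoke (\ref{thm-2-result-3}) with the extremal choice $\rho=\tfrac13\gamma e^{-L_FT}$, permissible for $N$ large (this is where $N_0$ enters, through the requirement $T\omega_N\le\rho$, and through $|z_N-z_0|\le\gamma$ so that $z_N$ lies in the tube). Then $\rho^{-1}=3\gamma^{-1}e^{L_FT}$ and $\mathbf{P}(\tau^u_N<T)\le 3\gamma^{-1}e^{L_FT}(|z_N-z_0|+C_{T,N})$, so multiplying by $2(KM_r+TM_\phi+M_\psi)$ gives precisely $\overline{M}\,e^{L_FT}(|z_N-z_0|+C_{T,N})$ with $\overline{M}$ as in (\ref{m-overline}). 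Adding the good-event and escape-event contributions yields (\ref{cost-approx-thm-3}). \emph{The main obstacle is exactly this stopping-time split}: Theorem~\ref{thm-2} only delivers convergence of $z^{u,N}$ to $\tilde{z}^u$ up to the exit time $\tau^u_N$, so one cannot simply pass the Lipschitz estimates through the expectation; the coefficient $6\gamma^{-1}$ in $\overline{M}$ is forced by pairing the factor $2$ from $|{\rm cost}|\le 2M$ with $\rho^{-1}=3\gamma^{-1}e^{L_FT}$.

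For the final uniform assertion I would observe that when $\gamma>0$ can be chosen common to all $u\in\mathcal{U}_{o,0}$, every constant on the right-hand side of (\ref{cost-approx-thm-3})—namely $L_F,L_r,L_\phi,L_\psi$ from Assumptions~\ref{assump-3}--\ref{assump-4}, and $C_{T,N},\overline{M}$, whose ingredients $\omega_N,M_{N,\alpha}$ are suprema over $\nu\in\mathcal{A}$—is independent of the particular policy $u$, and $N_0$ may likewise be taken uniformly. Since $C_{T,N}\to0$ by Assumptions~\ref{assump-1}--\ref{assump-2} and $|z_N-z_0|\to0$ by hypothesis, the bound tends to $0$ uniformly over $u\in\mathcal{U}_{o,0}$, which is the claimed uniform convergence.
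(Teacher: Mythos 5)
Your proposal is correct and follows essentially the same route as the paper: the same split of the pathwise cost difference over the events $\{\tau^u_N \ge T\}$ and $\{\tau^u_N < T\}$, the same use of the stopped estimate (\ref{thm-2-result-1}) on the good event, and the same choice $\rho = \tfrac{1}{3}\gamma e^{-L_F T}$ feeding the exit-probability bound multiplied by the crude bound $2(KM_r + TM_\phi + M_\psi)$, which is exactly how $\overline{M}$ arises. The only (immaterial) difference is that you invoke the general estimate (\ref{thm-2-result-3}), retaining the $|z_N - z_0|$ term, whereas the paper uses the sharper deterministic-start version (\ref{thm-2-result-3-improved}) from Remark~\ref{rmk-kurtz-1}; both land within the stated bound (\ref{cost-approx-thm-3}).
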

\begin{proof}
  First of all, let us define the quantity 
  \begin{align*}
    I =& \sum_{j = 0}^{K-1} \Big[
  r(z^{u, N}(t_j), \nu_j) - r(\tilde{z}^u(t_j), \nu_j)
    + \int_{t_j}^{t_{j+1}} \Big(\phi(z^{u, N}(s), \nu_{j}) -
   \phi(\tilde{z}^{u}(s), \nu_{j})\Big) \,ds\Big] \\
   & + \psi(z^{u,N}(T)) - \psi\big(\tilde{z}^u(T)\big)\,.
  \end{align*}
  Then the boundedness conditions in Assumption~\ref{assump-4}
  immediately imply $|I| \le 2\big(KM_r + TM_\phi + M_\psi\big)$. 
  Recalling the stopping time $\tau^u_N$ in (\ref{tau-u-n}) and the Lipschitz
  conditions in Assumption~\ref{assump-4}, we also have 
  \begin{align}
    \begin{split}
    |I| \le & 
     \sum_{j = 0}^{K-1} \Big\{L_r\big|z^{u, N}(t_j) -
  \tilde{z}^{u}(t_j)\big| + L_\phi \int_{t_j}^{t_{j+1}} \big|z^{u, N}(s) - \tilde{z}^{u}(s)
 \big| ds \Big\} \\
 & + L_\psi \big|z^{u, N}(T) - \tilde{z}^{u}(T)\big|\,, 
 \end{split}
 \label{thm3-inproof-1}
  \end{align}
  as long as $\tau^u_N \ge T$.  Therefore, using the definitions of the cost functions $J_N$,
  $\widetilde{J}$,  we have 
  \textcolor{revisecolor}{
\begin{align*}
  & \big|J_N(z_N, u) - \widetilde{J}(z_0, u)\big| =  \big|\mathbf{E}^u_{z_N}\,I| \\
  \le& 
  \big|\mathbf{E}^u_{z_N}(I\cdot\mathbbm{1}_{\{\tau^{u}_N \ge T\}})\big| +
  \big|\mathbf{E}^u_{z_N}(I\cdot\mathbbm{1}_{\{\tau^{u}_N < T\}})\big| \\
  \le & \mathbf{E}^u_{z_N}\big(|I|\cdot\mathbbm{1}_{\{\tau^{u}_N \ge T\}}\big) +
2\big(KM_r + TM_\phi + M_\psi\big)\, \mathbf{P}\big(\tau^{u}_N < T)\,,
\end{align*}
}
where $\mathbbm{1}$ denotes the indicator function. 
For the first term above, noticing the fact
$$\mathbf{E}^u_{z_N}\!\left[\Big(\sup_{0 \le s \le t} \big|z^{u, N}(s) - \tilde{z}^u(s)\big|\Big)
\mathbbm{1}_{\{\tau^{u}_N \ge T\}}
\right]
\le \mathbf{E}^u_{z_N}\Big[\sup_{0 \le s \le t \wedge \tau^{u}_N} \big|z^{u, N}(s)
  - \tilde{z}^u(s)\big|\Big],$$
using (\ref{thm3-inproof-1}) and applying Theorem~\ref{thm-2}, we obtain
  \begin{align*}
    & \textcolor{revisecolor}{\mathbf{E}^u_{z_N}\big(|I|\cdot\mathbbm{1}_{\{\tau^{u}_N \ge
T\}}\big)} \\
  \le & \Big(|z_N - z_0| + C_{T,N}\Big)\Big[\sum_{j = 0}^{K-1}\Big(L_\phi
\int_{t_j}^{t_{j+1}} \, e^{L_F\,s} ds + L_r e^{L_F\,t_j}\Big)+ L_\psi e^{L_F T}\Big] \\
  \le & \left\{L_\phi \frac{e^{L_F T} - 1}{L_F} + \big(KL_r + L_\psi\big) e^{L_FT}\right\}\,\Big(|z_N - z_0| + C_{T,N}\Big)\,.
\end{align*}
\textcolor{revisecolor}{
Now fix the constant $\rho= \frac{1}{3}\gamma e^{-L_FT}$ and choose $N_0$ such
that $|z_N-z_0| \le \rho$ when $N>N_0$. The assertion (\ref{cost-approx-thm-3}) then follows after we estimate
$\mathbf{P}(\tau^u_N < T)$ by applying Theorem~\ref{thm-2}. See (\ref{thm-2-result-3-improved}) in Remark~\ref{rmk-kurtz-1}.}
The convergence of the cost function $J_N$ to $\widetilde{J}$ follows from
(\ref{cost-approx-thm-3}) directly. 
\end{proof}

\subsubsection*{Feedback control}
Now we consider the case of a feedback control problem.
In accordance with (\ref{cost-j-general}), we define the cost functional for $u \in \mathcal{U}_{f, k}$,
$z \in \mathbb{X}_N \textcolor{revisecolor}{\cap \Omega}$, $0 \le k < K$ and the corresponding value function as
\begin{align}
  \begin{split}
  J_N(z,u, k) & = \mathbf{E}^u_{t_{k},z}\!\left[\sum_{j=k}^{K-1} \Big(
r(z^{u,N}(t_j), \nu_j) +  \int_{t_j}^{t_{j+1}}
\phi(z^{u,N}(s), \nu_j) \,ds\Big) +  \psi\big(z^{u,N}(T)\big)\right], \\
U_N(z,k) & = \inf_{u \in \mathcal{U}_{f,k}} J_N(z,u, k)\,,
\end{split}
\label{fb-j-u-finite}
\end{align}
with the shorthand $ \mathbf{E}^u_{t_{k},z}[\cdot]=
\mathbf{E}^u[\,\cdot\,|\,z^{u,N}(t_{k})=z]$ for the conditional expectation
over all realizations of the controlled process starting at
$z^{u,N}(t_{k})=z$. 
\textcolor{revisecolor}{Notice that, following the convention in
  Subsection~\ref{sub-setup-mjp}, we have used the same
notation $\nu_j$ to denote both the control policy function which depends on
system's state, and the value of the control selected at $t_j$, i.e.
we have $\nu_j=\nu_j(z^{u,N}(t_j))$ in (\ref{fb-j-u-finite}). See the discussion after
(\ref{control-policy-set}).}
By definition, the value function $U_N$, also called
the \emph{optimal cost-to-go}, is the minimum cost value from time $t_k$ to $T$ as a
function of the initial data $(z,t_{k})$. In particular, it holds that $U_N(z,K) =
\psi(z)$.

Then in complete analogy with the above definitions, we define
\begin{align*}
  \widetilde{J}(z,u,k) =& \sum_{j=k}^{K-1} \Big(
r(\tilde{z}^{u}(t_j), \nu_j)  +
\int_{t_j}^{t_{j+1}}
  \phi(\tilde{z}^{u}(s), \nu_j)\, ds\Big) +
  \psi\big(\tilde{z}^{u}(T)\big) \,,\quad u \in \mathcal{U}_{o, k}\,,\\
  \widetilde{U}(z,k) =& \inf_{u \in \mathcal{U}_{o,k}} \widetilde{J}(z,u,k)\,,
\end{align*}
for $z \in \Omega$,
to be the cost functional and the value function of the deterministic limiting process.
In what follows, we will omit the dependence of $J_N$,
$\widetilde{J}$ and $U_N$, $\widetilde{U}$ on
$k$ when $k = 0$ so that the notations are consistent with
(\ref{cost-j-general}) and (\ref{cost-j-ode}).

By the dynamic programming principle \cite{Puterman1994}, the necessary conditions for optimality are given in terms of Bellman's equations for the two value functions :
\begin{align}
  \begin{split}
  U_N(z_N,k) &= \inf_{\nu \in \mathcal{A}} \mathbf{E}^\nu\!\left[r(z_N,\nu) +
    \int_{t_k}^{t_{k+1}} \phi(z^{\nu, N}(s) ,\nu)\, ds +
  U_N(z^{\nu, N}(t_{k+1}), k+1) \right],  \\
  \widetilde{U}(z,k) &= \inf_{\nu \in \mathcal{A}} \left\{r(z,\nu) +
  \int_{t_k}^{t_{k+1}} \phi(\tilde{z}^\nu(s) ,\nu)\, ds +
\widetilde{U}(\tilde{z}^\nu(t_{k+1}), k+1) \right\},
\end{split}
\label{optimality-eqn}
\end{align}
with $0 \le k \le K - 1$,
where $z^{\nu, N}(t_{k}) = z_N \in \mathbb{X}_N$, $\tilde{z}^{\nu}(t_{k})=z
\in \textcolor{revisecolor}{\Omega}$
and the terminal conditions
\begin{align}
  U_N(z_N,K) = \psi(z_N)\,,\qquad  \widetilde{U}(z,K) = \psi(z)\,.
  \label{terminal-value-fun}
\end{align}
\textcolor{revisecolor}{Notice that in (\ref{optimality-eqn}), we have used the notation
$\mathbf{E}^\nu=\mathbf{E}^\nu_{t_{k},z_N}$ for the conditional expectation and $z^{\nu, N}(t)$,
$\tilde{z}^\nu(t)$ for the processes, since the involved quantities and
processes only depend on the control $\nu$ selected at $t_k$, rather than the whole control policy.
}

Before we proceed, we shall first introduce some
constants in order to simplify the analysis later on.
Let $h= \max\big\{|t_{j+1} - t_j| \colon 0\le j \le K-1 \big\}$. In accordance with (\ref{c-t-n}),
we set
\begin{equation}\label{c-h-n}
  C_{h,N} = h\,\omega_N + \frac{\alpha}{2(\alpha - 1)}
  \left(\frac{4h\,M_{N,\textcolor{revisecolor}{\alpha}}}{\alpha-1}\right)^{\frac{1}{\alpha}}\,.
\end{equation}
We also introduce the sequences of numbers $a_k, b_k$, $0 \le k \le K$, satisfying the recursive
relations
\begin{align}
  \begin{split}
    a_k =& L_r + L_\phi e^{L_Fh}h + \textcolor{revisecolor}{\overline{M} e^{L_Fh}} + a_{k+1} e^{L_Fh}\,,\\
    b_k =& L_\phi C_{h,N} e^{L_Fh} (t_{k+1} - t_k) + \textcolor{revisecolor}{2\overline{M} C_{h,N}
  e^{L_F h}} + a_{k+1}
    C_{h,N} e^{L_Fh} + b_{k+1}\,,
  \end{split}
  \label{ak-bk}
\end{align}
for $0 \le k \le K - 1$ and $a_K = L_\psi$, $b_K = 0$, where $\overline{M}$ is
defined in (\ref{m-overline}). The last two expressions can be made more explicit :
\begin{align}
  \begin{split}
    a_k = & \Big(L_r + L_\phi e^{L_Fh}h + \textcolor{revisecolor}{\overline{M}
  e^{L_Fh}} \Big) \frac{e^{L_Fh(K-k)} -
  1}{e^{L_Fh} - 1} + L_\psi e^{(K-k)L_Fh}\,, \\
  b_k  = & C_{h,N} e^{L_Fh}\left\{L_\phi (T - t_k) +
  \left[\frac{L_r + L_\phi e^{L_Fh}h + \textcolor{revisecolor}{\overline{M}e^{L_Fh}}}{e^{L_Fh} - 1}
  \left(\frac{e^{L_Fh(K-k)} - 1}{e^{L_Fh} - 1} - (K-k)\right) \right.\right.\\
  & + \left.\left. L_\psi
\frac{e^{L_Fh(K-k)} - 1}{e^{L_Fh} - 1} \right] +
\textcolor{revisecolor}{2\overline{M} (K-k)} \right\}\,,
\end{split}
\label{ak-bk-explicit}
\end{align}
for $0 \le k \le K$.
Notice that under Assumptions~\ref{assump-1} and \ref{assump-2}, both $C_{h, N}$
and $b_k$ go to zero as $N\rightarrow\infty$.

\textcolor{revisecolor}{Similar to (\ref{omega-gamma-set}), we also introduce the set 
  \begin{align}
    \Omega^u_{\gamma,z, [t_i, t_{j}]} := \Big\{z' \in \mathbb{R}^n~\Big|~\inf_{t_i \le t \le t_j}|z'-\tilde{z}^u(t)| \le \gamma\Big\} \,,
    \label{omega-gamma-set-i-j}
  \end{align}
  between two control stages $t_i < t_j$ where $\tilde{z}^u(t_i) = z$, $u \in \mathcal{U}_{o, i}$.
  Especially, the notation $\Omega^\nu_{\gamma, z, [t_i, t_{i+1}]}$ will be
      used when only the control policy $\nu \in \mathcal{A}$ at the control stage
    $t_i$ is relevant.} We have the following approximation result of the value functions.
\begin{theorem}
  Suppose Assumptions~\ref{assump-1}-\ref{assump-4} hold.
Given $0 \le k \le K$ and $z \in \Omega$, s.t.
the ODE (\ref{limit-ode})  has a unique solution $\tilde{z}^u$ on $[t_k, T]$ 
for all $u \in \mathcal{U}_{o, k}$ and furthermore, $\exists \gamma > 0$, s.t.
    $\Omega^u_{\gamma,z, [t_k, T]} \subseteq \Omega$.
  for all $u \in \mathcal{U}_{o, k}$.  Let $z_N \in \mathbb{X}_N
  \textcolor{revisecolor}{\cap \Omega}$ be random with $\mathbf{E}|z_N - z| < \infty$.
  Then $\exists N_k>0$, s.t. 
  \begin{align}
    \mathbf{E} |U_N(z_N,k) - \widetilde{U}(z,k)| \le a_k \mathbf{E}|z_N - z| +
    b_k\,,
    \label{value-approx}
\end{align}
for $N>N_k$, with $a_k, b_k$ as given by (\ref{ak-bk}) or (\ref{ak-bk-explicit}).
 Further suppose that $u_0 \in \mathcal{U}_{o,0}$ is the optimal (open loop) control policy for the process
$\tilde{z}^{u}$, i.e.~$\widetilde{J}(z, u_0) = \widetilde{U}(z)$, and $z_N
\in \mathbb{X}_N \textcolor{revisecolor}{\cap \Omega}$ is deterministic satisfying $z_N \rightarrow z$ as $N \rightarrow \infty$. Then $\exists N_0 > 0$, s.t. when $N>N_0$, 
\begin{align}
  \begin{split}
  &|J_N(z_N, u_0) - U_N(z_N)| \\
  \le& b_0 + a_0 |z_N - z| + \left[L_\phi \frac{e^{L_F T} - 1}{L_F}
  +\big(KL_r + L_\psi + \textcolor{revisecolor}{\overline{M}}\big)e^{L_FT} \right]\,\Big(C_{T,N} + |z_N -z|\Big)\,.
\end{split}
  \label{asymp-optimality}
\end{align}
Especially, it holds that
\begin{align*}
  \lim_{N \rightarrow \infty} |J_N(z_N, u_0) - U_N(z_N)| = 0.
\end{align*}
\label{thm-4}
\end{theorem}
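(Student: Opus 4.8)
The plan is to prove the two displayed inequalities in turn and to derive the asymptotic optimality (\ref{asymp-optimality}) as a corollary of the value-function estimate (\ref{value-approx}) together with the open-loop result of Theorem~\ref{thm-3}. Since $u_0$ is optimal for the limiting problem, $\widetilde{J}(z,u_0)=\widetilde{U}(z)=\widetilde{U}(z,0)$, so writing $J_N(z_N,u_0)-U_N(z_N)=\big(J_N(z_N,u_0)-\widetilde{J}(z,u_0)\big)+\big(\widetilde{U}(z)-U_N(z_N)\big)$ and applying the triangle inequality gives $|J_N(z_N,u_0)-U_N(z_N)|\le |J_N(z_N,u_0)-\widetilde{J}(z,u_0)|+|U_N(z_N)-\widetilde{U}(z)|$. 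The first term is bounded by Theorem~\ref{thm-3} applied to the open-loop policy $u_0$ with $z_0=z$, which supplies exactly the bracketed factor times $(C_{T,N}+|z_N-z|)$ appearing in (\ref{asymp-optimality}); the second term is the $k=0$ instance of (\ref{value-approx}) with the deterministic $z_N$, giving $a_0|z_N-z|+b_0$. The stated limit then follows because $|z_N-z|\to 0$, $C_{T,N}\to 0$, and $b_0\to 0$ (the latter since $C_{h,N}\to 0$ under Assumptions~\ref{assump-1}--\ref{assump-2}). Thus everything reduces to (\ref{value-approx}).

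To establish (\ref{value-approx}) I would argue by backward induction on $k$, from $k=K$ down to $0$, using the dynamic programming equations (\ref{optimality-eqn}). The base case $k=K$ is immediate from the terminal conditions (\ref{terminal-value-fun}) and the $L_\psi$-Lipschitz bound of Assumption~\ref{assump-4}: $\mathbf{E}|U_N(z_N,K)-\widetilde{U}(z,K)|=\mathbf{E}|\psi(z_N)-\psi(z)|\le L_\psi\mathbf{E}|z_N-z|$, which matches $a_K=L_\psi$, $b_K=0$. For the inductive step, assuming (\ref{value-approx}) at stage $k+1$ for $N>N_{k+1}$, I compare the two Bellman expressions in (\ref{optimality-eqn}). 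Using $|\inf_\nu A_\nu-\inf_\nu B_\nu|\le\sup_\nu|A_\nu-B_\nu|$ for each realization of $z_N$, the task reduces, for every fixed $\nu\in\mathcal{A}$, to estimating the one-stage difference $\Delta_\nu$ assembled from $r(z_N,\nu)-r(z,\nu)$, the running-cost integral of $\phi$ over $[t_k,t_{k+1}]$, and the continuation gap $U_N(z^{\nu,N}(t_{k+1}),k+1)-\widetilde{U}(\tilde{z}^\nu(t_{k+1}),k+1)$.

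The crucial step is to split $\mathbf{E}^\nu[\Delta_\nu]$ over the tube-exit event defined by the one-stage stopping time $\tau^\nu_N$ of (\ref{tau-u-n}), now taken relative to the one-stage tube $\Omega^\nu_{\gamma,z,[t_k,t_{k+1}]}$ of (\ref{omega-gamma-set-i-j}). On the good event $\{\tau^\nu_N\ge t_{k+1}\}$ the process stays in $\Omega$, so the Lipschitz bounds of Assumption~\ref{assump-4} apply and Theorem~\ref{thm-2} controls $\mathbf{E}\sup_{t_k\le s\le t_{k+1}\wedge\tau^\nu_N}|z^{\nu,N}(s)-\tilde{z}^\nu(s)|\le(\mathbf{E}|z_N-z|+C_{h,N})e^{L_Fh}$; this produces the $L_r$, $L_\phi e^{L_Fh}h$, and matching $C_{h,N}$ pieces in (\ref{ak-bk}). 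For the continuation gap I would exploit that the induction hypothesis at stage $k+1$, although stated in expectation, holds for every \emph{deterministic} initial state (take $z_N$ degenerate) and hence gives the \emph{pointwise} bound $|U_N(w',k+1)-\widetilde{U}(\tilde{z}^\nu(t_{k+1}),k+1)|\le a_{k+1}|w'-\tilde{z}^\nu(t_{k+1})|+b_{k+1}$ for every lattice point $w'$; applying this with $w'=z^{\nu,N}(t_{k+1})$ on the good event and invoking Theorem~\ref{thm-2} once more yields the $a_{k+1}e^{L_Fh}$, $a_{k+1}C_{h,N}e^{L_Fh}$, and additive $b_{k+1}$ contributions. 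One must check that the theorem's hypotheses transfer to stage $k+1$ with the \emph{same} $\gamma$: the trajectory $\tilde{z}^\nu$ on $[t_k,t_{k+1}]$ concatenated with any $u\in\mathcal{U}_{o,k+1}$ is a trajectory from $z$ under a policy in $\mathcal{U}_{o,k}$, so $\Omega^u_{\gamma,\tilde{z}^\nu(t_{k+1}),[t_{k+1},T]}\subseteq\Omega^{(\nu,u)}_{\gamma,z,[t_k,T]}\subseteq\Omega$.

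On the complementary bad event $\{\tau^\nu_N<t_{k+1}\}$ I would rely on the \emph{global} boundedness of $r,\phi,\psi$ on all of $\mathbb{R}^n$ (Assumption~\ref{assump-4}): both cost-to-go quantities are then bounded by $KM_r+TM_\phi+M_\psi$, so the bad-event contribution is at most $2(KM_r+TM_\phi+M_\psi)\,\mathbf{P}(\tau^\nu_N<t_{k+1})$, and bounding the exit probability by Theorem~\ref{thm-2}, specifically (\ref{thm-2-result-3}) with $\rho=\frac{1}{3}\gamma e^{-L_Fh}$, turns this into a multiple of $\overline{M}e^{L_Fh}(\mathbf{E}|z_N-z|+C_{h,N})$, which is precisely where the constant $\overline{M}$ of (\ref{m-overline}) enters $a_k$ and $b_k$. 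Collecting the good- and bad-event contributions, taking $\sup_\nu$ and then $\mathbf{E}$ over $z_N$, reproduces the recursion (\ref{ak-bk}); setting $N_k=\max\{N_{k+1},N^\ast\}$ with $N^\ast$ large enough for the single-stage estimates of Theorem~\ref{thm-2} to hold (e.g.\ $h\omega_N\le\rho$) closes the induction. I expect the main obstacle to be exactly this control of the continuation gap across the tube-exit event: the unstopped displacement $\mathbf{E}|z^{\nu,N}(t_{k+1})-\tilde{z}^\nu(t_{k+1})|$ is \emph{not} controlled by Theorem~\ref{thm-2}, and the resolution—using the pointwise form of the induction hypothesis on the good event while falling back on the uniform cost bounds and the exit-probability estimate on the bad event—is the delicate bookkeeping that forces the appearance of $\overline{M}$.
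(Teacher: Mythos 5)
Your proposal is correct and takes essentially the same approach as the paper: you reduce (\ref{asymp-optimality}) via the triangle inequality to Theorem~\ref{thm-3} plus the $k=0$ case of (\ref{value-approx}), and prove (\ref{value-approx}) by backward induction on the Bellman equations (\ref{optimality-eqn}), splitting each one-stage comparison over the single-stage tube-exit event so that Theorem~\ref{thm-2} controls the good event while the uniform cost bounds times the exit probability produce the $\overline{M}$-terms in the recursion (\ref{ak-bk}). The only differences are immaterial bookkeeping: the paper first conditions on $\{|z_N - z|\le \rho\}$ with $\rho = \frac{1}{3}\gamma e^{-L_F h}$ (handling the complement by Chebyshev's inequality) so that it can use the deterministic exit estimate (\ref{thm-2-result-3-improved}), and it applies the stage-$(k+1)$ hypothesis to the conditional law of $z^{\nu,N}(t_{k+1})$ given $\{\tau^{\nu}_N \ge t_{k+1}\}$, whereas you invoke (\ref{thm-2-result-3}) directly with random initial data and use the induction hypothesis pointwise on deterministic states---both variants yield the same constants $a_k$, $b_k$.
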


\begin{proof}
  We first prove (\ref{value-approx}) by backward induction from $k=K$ to $k=0$.
  Let $\mathbf{E}$ denote the expectation with respect to
  the random variable $z_N
  \in \mathbb{X}_N \textcolor{revisecolor}{\cap \Omega}$
  and recall $\mathbf{E}^\nu$ is the shorthand of the conditional expectation $\mathbf{E}^\nu_{t_{k},z_N}$.
  For $k=K$, since $z, z_N \in \Omega$, the terminal condition (\ref{terminal-value-fun}) and the Lipschitz continuity of the terminal cost $\psi$ in Assumption~\ref{assump-4} imply that
  \begin{align*}
    \mathbf{E}|U_N(z_N,K) - \widetilde{U}(z, K)| = \mathbf{E}|\psi(z_N)
    - \psi(z)| \le L_\psi \mathbf{E}|z_N-z|\,,
  \end{align*}
  therefore  (\ref{value-approx}) holds with $a_K = L_\psi$, $b_K = 0$ and for
  any $N_K>0$. 
  
  Now suppose (\ref{value-approx})
  is true for $k+1\le K$. First notice that we have the simple estimate $$|U_N(z_N,
  k) - \widetilde{U}(z,k)| \le 2\big[(K-k) M_r + (T-t_k) M_\phi +
  M_\psi\big]$$
  under Assumption~\ref{assump-4}.  Then, \textcolor{revisecolor}{fixing the constant
  $\rho=\frac{1}{3}\gamma e^{-L_Fh}$}
  and using the Bellman equation (\ref{optimality-eqn}) for the value
  function, we can estimate
\begin{align}
  & \mathbf{E}|U_N(z_N,k) - \widetilde{U}(z,k)| \notag \\
  = &
  \mathbf{E}\Big[|U_N(z_N,k) - \widetilde{U}(z,k)|\cdot \textcolor{revisecolor}{\mathbbm{1}_{\{|z_N-z|
  \le \rho\}}}\Big]
  + \mathbf{E}\Big[|U_N(z_N,k) - \widetilde{U}(z,k)|\cdot \textcolor{revisecolor}{\mathbbm{1}_{\{|z_N-z| >
  \rho\}}}\Big] \notag \\
  \le &
  \mathbf{E}\Big[|U_N(z_N,k) - \widetilde{U}(z,k)|\cdot \textcolor{revisecolor}{\mathbbm{1}_{\{|z_N-z|
  \le \rho\}}}\Big] \notag \\
  &  + \textcolor{revisecolor}{6\gamma^{-1} e^{L_Fh} \big[(K-k) M_r + (T-t_k) M_\phi +
M_\psi\big]\,\,\mathbf{E}|z_N-z|} \notag \\
  \le &
  \mathbf{E}\bigg[\bigg(\sup_{\nu \in \mathcal{A}} \Big\{\big|r(z_N,\nu) - r(z,\nu)\big|  
   +  \mathbf{E}^\nu\left|\int_{t_k}^{t_{k+1}} \Big(\phi(\tilde{z}^\nu(s),\nu)
   - \phi(z^{\nu,N}(s),\nu)\Big)\, ds\right| \notag \\
  & + \mathbf{E}^\nu\left| U_N\big(z^{\nu,N}(t_{k+1}),k+1\big) -
  \widetilde{U}\big(\tilde{z}^\nu(t_{k+1}), k+1\big) \right|\Big\}\bigg)\cdot
\textcolor{revisecolor}{\mathbbm{1}_{\{|z_N-z| \le \rho\}}}\bigg] \notag \\
& + \textcolor{revisecolor}{\overline{M} e^{L_Fh} \mathbf{E}|z_N-z|} \,,
  \label{thm-4-tmp-1}
 \end{align}
 where Chebyshev's inequality has been used and we recall that the constant
 $\overline{M}$ is defined in (\ref{m-overline}). 

 \textcolor{revisecolor}{In the following, let us consider a fixed $z_N \in \mathbb{X}_N$ such that $|z_N - z| \le \rho$. We consider the process $z^{\nu, N}(s)$ on $[t_k, t_{k+1}]$ with $z^{\nu,N}(t_k) = z_N$
 and, similar to (\ref{tau-u-n}), we define the stopping time $$\tau^{\nu}_N = \inf\limits_{s \ge t_{k}}
 \Big\{s\,\big|\,z^{\nu, N}(s) \not\in \Omega^\nu_{\gamma, z, [t_k, t_{k+1}]}\Big\}.$$
 For the notation, see the paragraph following (\ref{omega-gamma-set-i-j}). 
 Since $\Omega^u_{\gamma, z, [t_k, T]} \subseteq \Omega$ for $\forall u\in
 \mathcal{U}_{o,k}$ trivially implies
 $\Omega^\nu_{\gamma, z, [t_k, t_{k+1}]} \subseteq \Omega$,
 Theorem~\ref{thm-2} when considered on the time interval $[t_k, t_{k+1}]$
 guarantees that $\exists N' > 0$, s.t. when $N \ge N'$ we have 
 \begin{align}
   \begin{split}
   & \mathbf{E}^\nu\Big[\sup_{t_k \le s \le t \wedge \tau^{\nu}_N} \big|z^{u,
   N}(s) - \tilde{z}^u(s)\big|\Big] \le
\Big(\big|z_N - z\big| + C_{h,N} \Big)\,e^{L_F (t-t_k)}\,, \quad t
\in [t_k, t_{k+1}]\,, \\
   & \mathbf{P}(\tau^{\nu}_N < t_{k+1}) \le 3 \gamma^{-1} e^{L_Fh} C_{h,N}\,,
\end{split}
\label{tmp-4-tmp-2}
 \end{align}
 where the second inequality follows from (\ref{thm-2-result-3-improved}) in Remark~\ref{rmk-kurtz-1}.  }

 We continue to estimate each of the three terms within the supremum in (\ref{thm-4-tmp-1}).
 For the first term, noticing that $|z_N - z| \le \rho < \gamma$ implies $z_N \in
 \Omega$, and the function $r$ is Lipschitz in $\Omega$, 
 \begin{align*}
   |r(z_N, \nu)-r(z,\nu)| \le L_r|z_N-z|\,.
 \end{align*}
 For the second term, using a similar argument as in the proof of
 Theorem~\ref{thm-3} and the estimate (\ref{tmp-4-tmp-2}), we can obtain, 
 for $N > N'$,
 \begin{align*}
   &\mathbf{E}^\nu\Big|\int_{t_k}^{t_{k+1}} \big(\phi(\tilde{z}^\nu(s),\nu) -
   \phi(z^{\nu,N}(s),\nu)\big)\, ds\Big| \\
 \le 
 &\mathbf{E}^\nu\!\left[\left(\int_{t_k}^{t_{k+1}}
  \big|\phi(\tilde{z}^\nu(s),\nu) - \phi(z^{\nu,N}(s),\nu)\big|\,
ds\right)\textcolor{revisecolor}{\mathbbm{1}_{\{\tau^{\nu}_N \ge t_{k+1}\}}}\right] +
\textcolor{revisecolor}{2hM_\phi
\mathbf{P}(\tau^{\nu}_N < t_{k+1})}  \\
\le& L_\phi\Big(|z_N - z| + C_{h,N}\Big)e^{L_Fh} (t_{k+1} - t_k) +
\textcolor{revisecolor}{6h\gamma^{-1} M_\phi e^{L_Fh} C_{h,N}}\,. 
 \end{align*}
 For the third term, we notice the simple fact that $\Omega^{u}_{\gamma, z, [t_k, T]}
 \subseteq \Omega$ for $\forall u \in \mathcal{U}_{o, k}$ implies
 $\Omega^u_{\gamma, z', [t_{k+1}, T]} \subseteq \Omega$ for $\forall u \in \mathcal{U}_{o,
 k+1}$, where $z'=\tilde{z}^\nu(t_{k+1})$. And also that $\tau^{\nu}_N \ge t_{k+1}$
 implies $z^{\nu,N}(t_{k+1}) \in \Omega$. We have
 \begin{align*}
   &\mathbf{E}^\nu\Big| U_N(z^{\nu,N}(t_{k+1}),k+1) -
  \widetilde{U}(\tilde{z}^\nu(t_{k+1}), k+1) \Big| \\
\le 
& \mathbf{E}^\nu\Big[\big| U_N(z^{\nu,N}(t_{k+1}),k+1) -
  \widetilde{U}(\tilde{z}^\nu(t_{k+1}), k+1) \big|\,\cdot
\textcolor{revisecolor}{\mathbbm{1}_{\{\tau^{\nu}_N \ge t_{k+1}\}}}\Big]  \\
&+ \textcolor{revisecolor}{2\Big[(K-k-1) M_r + (T-t_{k+1}) M_\phi + M_\psi\Big]
\mathbf{P}(\tau^{\nu}_N < t_{k+1})}\\
  \le
& \mathbf{E}^\nu\Big[\big| U_N(z^{\nu,N}(t_{k+1}),k+1) -
\widetilde{U}(\tilde{z}^\nu(t_{k+1}), k+1)
\big|\,\Big|\,\textcolor{revisecolor}{\tau^{\nu}_N \ge t_{k+1}}\Big]
\textcolor{revisecolor}{\mathbf{P}(\tau^{\nu}_N \ge t_{k+1})} \\
&+ \textcolor{revisecolor}{6\gamma^{-1}\Big[(K-k-1) M_r + (T-t_{k+1}) M_\phi + M_\psi\Big]
e^{L_Fh}\,C_{h,N}} \\
  \le & a_{k+1} \mathbf{E}^\nu
\Big[|z^{\nu,N}(t_{k+1}) - \tilde{z}^\nu(t_{k+1})| \cdot
\textcolor{revisecolor}{\mathbbm{1}_{\{\tau^{\nu}_N \ge t_{k+1}\}}}\Big]
+ b_{k+1} + \textcolor{revisecolor}{\overline{M} e^{L_Fh}\,C_{h,N}} \\
\le& a_{k+1} \big(|z_N-z| + C_{h,N}\big) e^{L_Fh} + b_{k+1} + 
\textcolor{revisecolor}{\overline{M} e^{L_Fh}\,C_{h,N}}\,,
 \end{align*}
 for $N > \max\big\{N', N_{k+1}\big\}$.
 In the above, we have used the conclusion for $k+1$ to the conditional
 expectation $\mathbf{E}^\nu(\cdot\,|\,\tau^\nu_N \ge t_{k+1})$. 

 Substituting the above estimates into (\ref{thm-4-tmp-1}), we conclude
\begin{align*}
  & \mathbf{E} |U_N(z_N,k) - \widetilde{U}(z,k)| \\
  \le & \mathbf{E}\Big[L_r |z_N -
  z| + 
  L_\phi\big(|z_N - z| + C_{h,N}\big)e^{L_Fh} (t_{k+1} - t_k) +
  \textcolor{revisecolor}{6h\gamma^{-1} M_\phi e^{L_Fh} C_{h,N}}\,\\
  & + a_{k+1} \big(|z_N-z| + C_{h,N}\big) e^{L_Fh} + b_{k+1} + 
\textcolor{revisecolor}{\overline{M} e^{L_Fh}\,C_{h,N}}\Big] +
\textcolor{revisecolor}{e^{L_Fh} \overline{M}\,\mathbf{E}|z_N-z|}  \\
\le & \big(L_r + L_\phi e^{L_Fh}h+ a_{k+1} e^{L_Fh} +
\textcolor{revisecolor}{\overline{M} e^{L_Fh} \big)\,\mathbf{E}|z_N - z|} \\
& + L_\phi C_{h,N} e^{L_Fh} (t_{k+1} - t_k) + \textcolor{revisecolor}{2\overline{M}
C_{h,N} e^{L_Fh}} + a_{k+1} C_{h,N} e^{L_Fh} + b_{k+1} \\ = & a_k \mathbf{E}\,|z_N - z| + b_k\,,
\end{align*}
where the recursive relation (\ref{ak-bk}) has been used in the last equation.
This proves (\ref{value-approx}) for $k$ with
$N_k=\max\big\{N',N_{k+1}\big\}$. 

Equation (\ref{asymp-optimality}) now follows from (\ref{value-approx})   and Theorem~\ref{thm-3}, using the
triangle inequality: $\exists N_0 > 0$, s.t. $N>N_0$, we have 
\begin{align*}
  & |J_N(z_N, u_0) - U_N(z_N)|\\
  \le & |J_N(z_N, u_0) - \widetilde{J}(z, u_0)| +
  |\widetilde{U}(z) - U_N(z_N)| \\
  \le & b_0 + a_0|z_N - z| + \left(L_\phi \frac{e^{L_F T} - 1}{L_F} +\big(KL_r
  + L_\psi + \textcolor{revisecolor}{\overline{M}}\big) e^{L_FT}\right)\,\Big(C_{T,N} + |z_N - z|\Big) \,.
\end{align*}
Convergence $|J_N(z_N, u_0) - U_N(z_N)|\to 0$ as $N\to\infty$  readily
follows from Assumptions~\ref{assump-1} and \ref{assump-2}.
\end{proof}
\begin{remark}
  As discussed in Remark \ref{rmk-kurtz-density-depend}, we have $C_{T,N}
  =\mathcal{O}(N^{\frac{1}{\alpha} - 1})$ and thus $b_0 =
  \mathcal{O}(N^{\frac{1}{\alpha} - 1})$ for the density dependent Markov
  chain introduced in Subsection~\ref{sub-concrete}.  As a consequence, in
  this case we can explicitly compute the order of convergence in Theorems~\ref{thm-3} and
  \ref{thm-4}. That is, $\exists N_0 > 0$, s.t. when $N>N_0$,  
\begin{align*}
  &|J_N(z_N, u) - \widetilde{J}(z_0, u)|
  \le
  C N^{\frac{1}{\alpha} - 1} \,, \quad u \in \mathcal{U}_{o,0}\,,
\end{align*}
and
\begin{align*}
  |J_N(z_N, u_{0}) - U_N(z_N)| \le
  C N^{\frac{1}{\alpha} - 1} \,,
\end{align*}
with $C>0$ being a generic constant, $u_{0}$ being the optimal open loop policy for the limiting process
$\tilde{z}^u$, and $U_N$ being the value function of the stochastic feedback optimal control problem.
\end{remark}

\subsection{Feedback optimal control on infinite time-horizon with discounted cost}
\label{sub-discount}

As a final step of our analysis, 
we consider the discounted optimal control problem on an infinite time-horizon. 
\textcolor{revisecolor}{While the open loop control problem on a
  finite time horizon that is addressed in Theorem~\ref{thm-3} will be useful later
  on in Sections~\ref{sec-hybrid} and~\ref{sec-examples}, open loop control on an infinite time-horizon
for stochastic processes seems to be less relevant in applications. }
Therefore, in the following, we consider the \textcolor{revisecolor}{feedback} optimal control problem with cost functional
 \begin{align}
   J_N(z, u) = \mathbf{E}_{z}^u\!\left[\sum_{j=0}^{\infty} e^{-\beta t_j} \left(
       r(z^{u,N}(t_j), \nu_j) + \int_{t_j}^{t_{j+1}} \phi(z^{u,N}(s), \nu_j)\, ds
   \right)\right]\,,
  \label{cost-j-general-infty-discount}
\end{align}
where $\beta > 0$ is a discount factor, $u \in \mathcal{U}_f$ with  
\begin{align}
  \mathcal{U}_{f} =  \Big\{(\nu_0, \nu_{1}, \cdots) ~|~ \nu_j :
    \mathbb{X}_N \rightarrow \mathcal{A}\,, \quad 0\le j < \infty\Big\}\,,
\end{align}
and again the shorthand $\nu_j=\nu_j(z^{u,N}(t_j))$ has been used in (\ref{cost-j-general-infty-discount}).

We assume that the control set $\mathcal{A}$ is finite, \textcolor{revisecolor}{which guarantees the
  existence of the optimal control policy and will simplify the proof of
  Theorem~\ref{thm-5} (see below). We emphasize that this assumption is not
  essential and can be relaxed since we will only consider $\epsilon$-optimal
control policies in Theorem~\ref{thm-5}. Also see the related discussions in
Subsection~\ref{subsec-assumptions}}. Furthermore, we only focus on the
case when the time stages at which the controls can be changed are uniformly
distributed, i.e. $t_j = jh$ for some $h > 0$. \textcolor{revisecolor}{This
  uniformity in time allows us to define value functions which only depend on system's states and will simplify the discussions below. }

It is known (e.g. \cite{Puterman1994}) that the value function $U_N(z) = \inf\limits_{u \in \mathcal{U}_f} J_N(z,u)$
solves the Bellman equation
\begin{align}
  U_N(z) = \min_{\nu \in \mathcal{A}} \mathbf{E}_{z}^\nu\!\left[r(z,\nu) +
  \int_0^h \phi(z^{\nu, N}(s), \nu) ds + \lambda U_N(z^{\nu, N}(h))\right]\,,
  \label{optimality-discount-N}
\end{align}
where $\lambda = e^{-\beta h} < 1$. Moreover it is known \cite{Puterman1994}
that there is a map
$\pi_N \colon \mathbb{X}_N \rightarrow \mathcal{A},$ such that  $u_{opt}=(\pi_N, \pi_N,
\cdots) \in \mathcal{U}_f$ is an optimal feedback policy that satisfies $U_N(z) = J_N(z, u_{opt})$ and can be determined by the dynamic programming (i.e.~Bellman) equation via
\begin{align*}
  \pi_N(z) \in \argmin_{\nu \in \mathcal{A}} \left\{r(z,\nu) +
  \mathbf{E}^\nu_z\Big[ \int_0^h
\phi(z^{\nu,N}(s), \nu)\, ds + \lambda U_N(z^{\nu,N}(h))\Big]\right\}, \quad z \in
  \mathbb{X}_N\,.
\end{align*}

In correspondence with the stochastic control problem, we also consider the optimal
control of the deterministic limit dynamics $\tilde{z}^u(\cdot)$ which
satisfies ODE (\ref{limit-ode}), where $$u
\in \mathcal{U}_{o} :=  \Big\{\,(\nu_0, \nu_{1}, \cdots) ~\Big|~ \nu_j \in
\mathcal{A}\,,~0\le j < \infty\Big\}.$$ 
{
\color{revisecolor}In this context, it is necessary that the solution $\tilde{z}^u(\cdot)$ exists on $[0, +\infty)$. 
    Recalling the set defined in (\ref{omega-gamma-set}), in the following we
    consider the subset $\Omega_g \subseteq \Omega$ with the property that,
    \begin{enumerate}
      \item
	$z \in \Omega_g$ $ \Longrightarrow$ $\tilde{z}^u(t) \in \Omega_g$,
	$\forall 0 \le t < \infty$, $\forall u \in \mathcal{U}_o$.
      \item
    for all $T>0$, we can find $\gamma>0$, such that $\Omega^u_{\gamma, z, [0, T]}
    \subseteq \Omega$ holds for all $u \in \mathcal{U}_o$, $\forall z \in \Omega_g$.
\end{enumerate}

      We emphasize that this (nonempty) subset $\Omega_g$ can be easily
      constructed as long as $\Omega$ is large enough and it doesn't have to
      be unique. In fact, when the solution $\tilde{z}^u$ of the ODE (\ref{limit-ode}) starting from
      $\tilde{z}^u(0) = z$ exists on $[0, +\infty)$ and stays in $\Omega$
	for all time (without approaching its boundary) for any $u \in \mathcal{U}_o$, it is easy to see that the set
	$\Omega_g:=\{\tilde{z}^u(t)~|~ t\ge 0,\, u \in\mathcal{U}_o\}$
	satisfies the above two conditions. 
}

  The natural candidate for the deterministic cost functional reads
\begin{align}
  \widetilde{J}(z, u) = \sum_{j=0}^{\infty} e^{-\beta jh} \left(r(\tilde{z}^u(jh), \nu_j) + \int_{jh}^{(j+1)h}
  \phi(\tilde{z}^u(s), \nu_j)\, ds \right)\,,
  \label{J-discount-infty}
\end{align}
where $z \in \Omega_{g}$. 
  \textcolor{revisecolor}{Notice that again, following the convention in Subsection~\ref{sub-setup-mjp}, we use the same
notation $\nu_j$ to denote both the control policy function which depends on
system's state, and the value of the control selected at $t_j$. See the discussion after
(\ref{control-policy-set}).}

  By the dynamic programming principle, the
  corresponding value function $\widetilde{U}(z) = \inf\limits_{u\in
  \mathcal{U}_o} \widetilde{J}(z,u)$ satisfies
\begin{align}
  \widetilde{U}(z) = \min_{\nu \in \mathcal{A}} \left\{r(z,\nu) + \int_0^h
  \phi(\tilde{z}^\nu(s), \nu)\, ds + \lambda \widetilde{U}(\tilde{z}^{\nu}(h))\right\}\,,
  \label{optimality-discount-infty}
\end{align}
where $\tilde{z}^\nu(0) = z \in \Omega_g$.  We will assume that a map $\pi_\infty :
\textcolor{revisecolor}{\Omega_g} \rightarrow \mathcal{A}$ exists such that
\begin{align}
  \pi_\infty(z) \in \argmin_{\nu \in \mathcal{A}} \left\{r(z,\nu) + \int_0^h
  \phi(\tilde{z}^{\nu}(s), \nu)\, ds + \lambda
\widetilde{U}(\tilde{z}^{\nu}(h))\right\} \,,
  \label{optimality-condition-infty}
\end{align}
where $\tilde{z}^{\nu}(0) = \textcolor{revisecolor}{z \in \Omega_g}
$.

Assumption~\ref{assump-4} implies that
  \begin{align}
    \widetilde{J}(z,u)
    \le \sum_{j = 0}^{\infty} e^{-\beta jh} \left(M_r +
    \int_{jh}^{(j+1)h} M_\phi\, ds\right)
    = \frac{M_r + M_\phi h}{1 - e^{-\beta h}} =:
    M_J\,. \label{MJ}
  \end{align}
  Similarly, $J_N(z,u) \le M_J$ and therefore the same upper bound
  applies to $\widetilde{U}(z)$ and $U_N(z)$. 

  The next theorem provides the relations between the stochastic optimal control
  problem and the optimal control problem of the limiting ODE.
    \begin{theorem}
    Let \textcolor{revisecolor}{the nonempty subset $\Omega_g \subseteq \Omega$ be given}. 
    \begin{enumerate}
      \item
	Suppose that Assumptions~\ref{assump-3}-\ref{assump-4} hold.
	For every $\epsilon > 0$, there exists $C_\epsilon > 0$, such that
	\begin{align*}
	  \sup_{\substack{z,z' \in \Omega_g \\|z - z'| \le R}}
	  |\widetilde{U}(z) - \widetilde{U}(z')| \le C_\epsilon R + \epsilon
	  \,,\quad \forall R > 0\,.
      \end{align*}
    \item
      Suppose that Assumptions~\ref{assump-1}-\ref{assump-4} hold. Then for
      all $\epsilon > 0$, there exists $\delta > 0$ and $N' \in{\mathbb N}$,
      such that when $N\ge N'$, it holds
      \begin{align}
	|U_N(z_N) - \widetilde{U}(z)| \le \epsilon\,, 
	\label{thm-5-ineq-1}
      \end{align}
      for all $z_N \in \mathbb{X}_N \cap \Omega$ , $z \in \Omega_g$, and $|z_N-z|\le \delta$.
    \item
Suppose that Assumptions~\ref{assump-1}-\ref{assump-4} hold. Given $0 <
\epsilon' < \epsilon$, 
$z \in \Omega_g$, and 
      an $\epsilon'$-optimal open loop policy $u =(\nu_0, \nu_1, \cdots) \in
      \mathcal{U}_o$ of the limiting ODE system, which satisfies
      $$\widetilde{U}(z) \le \widetilde{J}(z, u) \le \widetilde{U}(z) +
      \epsilon'.$$
      There exist constants $N'\in{\mathbb N}$ and $\delta > 0$, depending on
      $\epsilon, \epsilon'$ and $z$, such that
      for $N > N'$,  we have
    \begin{align*}
      J_N(z_N, u) \le U_N(z_N) + \epsilon\,,
    \end{align*}
    for all $z_N \in \mathbb{X}_N \cap \Omega$ and $|z_N - z| \le \delta$.
    That is, $u$ is an $\epsilon$-optimal control policy for the feedback optimal control problem (\ref{cost-j-general-infty-discount}).
    \end{enumerate}
    \label{thm-5}
  \end{theorem}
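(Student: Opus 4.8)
The plan is to treat the three assertions in order, reducing everything to the two single-stage tools already available, namely Theorem~\ref{thm-2} (the ODE approximation on one interval) and Theorem~\ref{thm-3} (cost-functional convergence), together with the contraction furnished by the discount factor $\lambda=e^{-\beta h}<1$. For the first assertion I would convert the Bellman equation (\ref{optimality-discount-infty}) into a recursive bound on the modulus of continuity of $\widetilde U$ on $\Omega_g$. Set $\omega(R)=\sup\{|\widetilde U(z)-\widetilde U(z')|\colon z,z'\in\Omega_g,\ |z-z'|\le R\}$, which is finite since $|\widetilde U|\le M_J$ by (\ref{MJ}). Fixing $z,z'$ and letting $\nu$ be a minimizer in (\ref{optimality-discount-infty}) for $z$, I would use $\nu$ as an admissible but suboptimal control at $z'$ and subtract the two Bellman relations; the Lipschitz bounds of Assumption~\ref{assump-4} and the Gronwall estimate $|\tilde{z}^\nu(s;z')-\tilde{z}^\nu(s;z)|\le|z-z'|e^{L_Fs}$ from Assumption~\ref{assump-3} (both trajectories staying in $\Omega_g$ by its forward-invariance) then give, after symmetrizing, $\omega(R)\le C_1 R+\lambda\,\omega(R e^{L_Fh})$ with $C_1=L_r+L_\phi(e^{L_Fh}-1)/L_F$. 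Since the spatial expansion $e^{L_Fh}$ may exceed $\lambda^{-1}$, this is not an outright contraction; instead I would iterate it $n$ times and bound the remainder by $\omega\le 2M_J$, obtaining $\omega(R)\le C_1 R\sum_{k=0}^{n-1}(\lambda e^{L_Fh})^k+2M_J\lambda^n$. Given $\epsilon$, choosing $n$ so large that $2M_J\lambda^n\le\epsilon$ yields the claim with $C_\epsilon=C_1\sum_{k=0}^{n-1}(\lambda e^{L_Fh})^k$.

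As a preparatory lemma for the second and third assertions I would record the infinite-horizon analogue of Theorem~\ref{thm-3}: for a fixed open loop $u\in\mathcal{U}_o$, truncating the series in (\ref{J-discount-infty}) and (\ref{cost-j-general-infty-discount}) after $n$ stages changes either functional by at most $\lambda^n M_J$ (boundedness, Assumption~\ref{assump-4}), while convergence of the finite $n$-stage remainder is exactly Theorem~\ref{thm-3}, whose tube hypothesis (\ref{omega-gamma-set}) holds on the finite horizon $[0,nh]$ by the second defining property of $\Omega_g$. Hence $|J_N(z_N,u)-\widetilde J(z,u)|\to0$ as $N\to\infty$ and $z_N\to z$, uniformly in $u\in\mathcal{U}_o$. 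This immediately gives the upper direction of the second assertion: taking an $\epsilon'$-optimal open loop $u^*$ for $\widetilde U(z)$ and noting $\mathcal{U}_o\subseteq\mathcal{U}_f$ (a constant policy is a special feedback policy), we get $U_N(z_N)\le J_N(z_N,u^*)\le\widetilde J(z,u^*)+\tfrac{\epsilon}{2}\le\widetilde U(z)+\epsilon$ for $N$ large and $|z_N-z|$ small.

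For the reverse direction $\widetilde U(z)\le U_N(z_N)+\epsilon$, I would fix the truncation depth $n$ and run a backward induction over the $n$ stages exactly as in the proof of Theorem~\ref{thm-4}, but with the extra factor $\lambda$ in front of every continuation value. This produces discounted constants $a_k^{(n)},b_k^{(n)}$ obeying $a_k=L_r+L_\phi e^{L_Fh}h+\overline{M}e^{L_Fh}+\lambda e^{L_Fh}a_{k+1}$ (with $\overline M$ as in (\ref{m-overline})) and an analogous relation for $b_k$, so that $b_k^{(n)}\to0$ as $N\to\infty$ and $a_0^{(n)}$ is finite for each fixed $n$; as in Theorem~\ref{thm-4} one invokes Theorem~\ref{thm-2} on a single stage $[0,h]$ and the exit-probability bound (\ref{thm-2-result-3-improved}) from Remark~\ref{rmk-kurtz-1} to control $\mathbf P(\tau^\nu_N<h)$. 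The two defining properties of $\Omega_g$ are precisely what keep the induction inside admissible domains: its forward-invariance guarantees that the ODE endpoints $\tilde z^\nu(h)\in\Omega_g$ at which $\widetilde U$ is evaluated, while on the good event $\{\tau^\nu_N\ge h\}$ the jump endpoints $z^{\nu,N}(h)$ stay in $\Omega$. The outcome is the two-sided bound $|U_N^{(n)}(z_N)-\widetilde U^{(n)}(z)|\le a_0^{(n)}|z_N-z|+b_0^{(n)}$; adding the truncation error $2\lambda^n M_J$ and choosing $n$ (to make $\lambda^n M_J\le\epsilon/4$), then $\delta$ (to make $a_0^{(n)}\delta\le\epsilon/4$), then $N$ large (to make $b_0^{(n)}\le\epsilon/4$) establishes (\ref{thm-5-ineq-1}).

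The third assertion is then a triangle inequality assembled from the pieces above: $J_N(z_N,u)\le\widetilde J(z,u)+|J_N(z_N,u)-\widetilde J(z,u)|\le\widetilde U(z)+\epsilon'+|J_N(z_N,u)-\widetilde J(z,u)|$, and $\widetilde U(z)\le U_N(z_N)+|U_N(z_N)-\widetilde U(z)|$; using the cost-convergence lemma for the middle term and the second assertion for the last, each made $\le(\epsilon-\epsilon')/2$ for $N$ large and $\delta$ small, gives $J_N(z_N,u)\le U_N(z_N)+\epsilon$. I expect the main obstacle to be the reverse direction of the second assertion. Because $U_N$ has no a priori modulus of continuity and $\widetilde U$ is only Lipschitz-up-to-$\epsilon$, a naive single-scale contraction in the spatial gap fails to converge, since the expansion $e^{L_Fh}$ competes with the discount $\lambda$ and need not be dominated by it. The device that resolves this is to let the discount truncate the horizon at a finite depth $n$ selected from $\epsilon$ \emph{first}, so that the finite-horizon Lipschitz constant $a_0^{(n)}$ remains bounded and the gap $\delta$ and the pre-limit error $b_0^{(n)}$ can be driven down only afterwards; the forward-invariance and tube properties of $\Omega_g$ are what keep all evaluations of $\widetilde U$ and $U_N$ legitimate throughout the induction.
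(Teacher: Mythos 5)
Your proposal is correct, and its underlying mechanism is the one the paper uses; the difference lies in how parts 2 and 3 are packaged. Part 1 is argued exactly as in the paper: reuse the minimizer at $z$ as a suboptimal control at $z'$, apply the Gronwall bound $|\tilde{z}^\nu(s;z)-\tilde{z}^\nu(s;z')|\le e^{L_Fs}|z-z'|$, and iterate $\omega(R)\le C_1R+\lambda\,\omega(Re^{L_Fh})$ a finite number of times, terminating with the uniform bound $2M_J$ from (\ref{MJ}); you also correctly isolate the key point that the iteration depth must be fixed from $\epsilon$ \emph{before} $\delta$ and $N$, since $\lambda e^{L_Fh}$ need not be below one. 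For part 2, you truncate the infinite-horizon value functions at depth $n$ (error of order $\lambda^{n}M_J$) and re-run the backward induction of Theorem~\ref{thm-4} with a discount factor, getting a bound $a_0^{(n)}|z_N-z|+b_0^{(n)}$ with $a_k=L_r+L_\phi e^{L_Fh}h+\overline{M}e^{L_Fh}+\lambda e^{L_Fh}a_{k+1}$, whereas the paper never truncates: it derives, directly from the two Bellman equations (\ref{optimality-discount-N}) and (\ref{optimality-discount-infty}) together with the exit-probability bound (\ref{thm-2-result-3-improved}), the self-referential inequality $G_2(\delta)\le 3L_\phi\delta e^{L_Fh}h+\lambda G_2(3\delta e^{L_Fh})+2\delta^{-1}(hM_\phi+\lambda M_J)C_{h,N}$ for the two-sided modulus $G_2$, and unfolds it $k$ times, killing the tail with $2\lambda^kM_J$ --- which is the same finite-depth unfolding written at the level of fixed-point equations rather than of truncated value functions. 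Similarly, in part 3 the paper carries out your ``preparatory lemma'' inline (truncate the series, introduce the stopping time, apply Theorem~\ref{thm-2} on $[0,kh]$) before concluding by the same triangle inequality you propose. Your route buys modularity --- it reuses Theorems~\ref{thm-3} and \ref{thm-4} nearly verbatim and makes the discount's role as a horizon-truncation device explicit --- at the price of having to re-verify those finite-horizon proofs with the extra $\lambda$ factors (Theorem~\ref{thm-3} does not apply literally to the discounted truncated sum, though since $\lambda^j\le 1$ its proof goes through unchanged), while the paper's route stays entirely within the infinite-horizon Bellman framework. Two harmless observations: your separate open-loop argument for the upper direction of part 2 is redundant, since the induction bound (like the paper's $G_2$) is already two-sided; and both routes rely on the same bookkeeping --- ODE endpoints remain in $\Omega_g$ by forward invariance, jump endpoints remain in $\Omega$ on the good event --- which you supply from the two defining properties of $\Omega_g$, just as the paper does.
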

  \begin{proof}
    \begin{enumerate}
      \item
	Consider two starting points $z, z' \in \Omega_g$ and let $\nu =
	\pi_\infty(z)$. 
	Let $\tilde{z}^\nu(s;z)$, $\tilde{z}^\nu(s;z')$ be the solutions of
	the ODE (\ref{integral-ode}) on the time interval $[0, h]$ starting from $z, z'$ at $s=0$, respectively. And notice that $z,z' \in \Omega_g$ implies both solutions stay in $\Omega$ all time.

	By the Lipschitz continuity of the cost functions in Assumption~\ref{assump-4}, and (\ref{optimality-discount-infty})--(\ref{optimality-condition-infty}), we have
 \begin{equation}\label{u-continuity-1-side}
   \begin{aligned}
   \widetilde{U}(z') - \widetilde{U}(z)  \le& r(z', \nu) + \int_0^h \phi(\tilde{z}^{\nu}(s; z'), \nu) \, ds + \lambda
    \widetilde{U}(\tilde{z}^{\nu}(h; z'))\\
    & - r(z,\nu) - \int_0^h \phi(\tilde{z}^{\nu}(s; z), \nu)\, ds - \lambda
 \widetilde{U}(\tilde{z}^{\nu}(h; z))\\
 \le& L_r |z - z'| + \int_0^h L_\phi\big|\tilde{z}^{\nu}(s; z') -
 \tilde{z}^{\nu}(s; z)\big| \, ds \\
 & + \lambda \big|\widetilde{U}(\tilde{z}^{\nu}(h; z)) -
 \widetilde{U}(\tilde{z}^{\nu}(h; z'))\big| \,.
  \end{aligned}
  \end{equation}
  Using Assumption~\ref{assump-3}, the standard ODE theory implies 
  \begin{equation}\label{ode-diff-init-value}
    \big|\tilde{z}^\nu(t;z) - \tilde{z}^\nu(t;z')\big| \le e^{L_F t} \big|z -
    z'\big|\,,\quad 0 \le t \le h\,.
  \end{equation}
  Now for all $R \ge 0$, we define the function
  \begin{align}
    G_1(R) = \sup_{\substack{z_1, z_2\in \Omega_g\,,\\ |z_1 - z_2| \le R}} \big|\widetilde{U}(z_1) - \widetilde{U}(z_2)\big|\,,
    \label{G-R}
\end{align}
and it follows from (\ref{MJ}) that $G_1(R) \le 2M_J$, $\forall R\ge 0$.
    Combining (\ref{u-continuity-1-side}) and (\ref{ode-diff-init-value}), we find
    \begin{align*}
      G_1(R) \le \left(L_r + L_\phi e^{L_F h}h \right) R + \lambda G_1\!\left(e^{L_F h} R\right)\,,
    \end{align*}
    which, upon iterating the above inequality $k$ times, leads to
    \begin{align}
      G_1(R) \le \Big(L_r + L_\phi e^{L_F h} h\Big) \frac{1 - \lambda^k e^{L_Fkh}}{1 -
      \lambda e^{L_Fh}} R + 2\lambda^k M_J\,.
      \label{G-R-bound}
    \end{align}
    The first conclusion follows by noticing that $\lambda < 1$.
  \item
    Given $\epsilon > 0$ and since $\lambda < 1$, we could first choose $k>0$ such that $2\lambda^{k}
    M_J \le \frac{\epsilon}{3}$.
    From the definition of the subset $\Omega_g$, 
    we know $\exists \gamma > 0$, s.t. $\Omega^u_{\gamma, z, [0, kh]}
    \subseteq \Omega$ is satisfied for all $z \in \Omega_g$ and $u \in
    \mathcal{U}_o$.
    Let the constant $0 < \delta < \frac{\gamma}{3} e^{-L_F h}$ and $z_N \in \mathbb{X}_N\textcolor{revisecolor}{\cap
    \Omega}$, such that $|z-z_N| \le \delta$. 
    \textcolor{revisecolor}{Given $\nu \in \mathcal{A}$, we consider the stopping time 
    \begin{align}
      \tau^{\nu}_N = \inf_{s \ge 0} \Big\{s\,\Big|\,
      |z^{\nu,N}(s)-\tilde{z}^\nu(s)| > 3\delta e^{L_F h}\Big\} \wedge h \,,
      \label{thm-5-stopping-time}
    \end{align}
  where $z^{\nu, N}(0) = z_N$, $\tilde{z}^\nu(0) = z$, respectively.}  In
  fact, under Assumptions~\ref{assump-1}-\ref{assump-3} and
  using the fact that $\Omega^{\nu}_{\gamma, z, [0,h]} \subseteq \Omega$ (see the discussion
  before Theorem~\ref{thm-4} on the notations), the same argument in
  Theorem~\ref{thm-2} on the time interval $[0, h]$
implies that $\exists N' > 0$, s.t. when $N \ge N'$, we have 
\textcolor{revisecolor}{
 \begin{align}
   \mathbf{P}(\tau^{\nu}_N < h) \le \delta^{-1} C_{h,N}\,,
\label{tmp-5-tmp-2}
 \end{align}
 }
 where the constant $C_{h,N}$ is defined in (\ref{c-h-n}). Also see (\ref{thm-2-result-3-improved}) in Remark~\ref{rmk-kurtz-1}.  

More generally, for $R \ge 0$, we define the function
    $$G_2(R) = \sup_{\substack{z' \in \mathbb{X}_N \cap \Omega, z \in
    \Omega_g\\ |z'-z|\le R}} |U_N(z') - \widetilde{U}(z)|\,,$$
    and notice that Assumption~\ref{assump-4} implies $|G_2(R)| \le 2M_J$, $\forall R\ge 0$.

    Letting $\nu = \pi_\infty(z) \in \mathcal{A}$, using the dynamic programming equations (\ref{optimality-discount-N}),
    (\ref{optimality-discount-infty}) and the estimate (\ref{tmp-5-tmp-2}), we can obtain
       \begin{align*}
	 & U_N(z_N) - \widetilde{U}(z) \\
	 \le & \mathbf{E}^\nu_{\textcolor{revisecolor}{z_N}}\!\left[\int_0^h \phi(z^{\nu, N}(s), \nu)\, ds + \lambda
U_N(z^{\nu, N}(h))\right] 
 - \int_0^h \phi(\tilde{z}^\nu(s), \nu)\, ds - \lambda \widetilde{U}(\tilde{z}^{\nu}(h)) \\
	 \le & \mathbf{E}^\nu_{\textcolor{revisecolor}{z_N}}\!\left[\Big(\int_0^h \big|\phi(z^{\nu, N}(s), \nu) 
 -  \phi(\tilde{z}^\nu(s), \nu)\big|\, ds\Big)
   \cdot
 \textcolor{revisecolor}{\mathbbm{1}_{\{\tau^{\nu}_N \ge h\}}} \right]\\
 & + \lambda 
 \mathbf{E}^\nu_{\textcolor{revisecolor}{z_N}}
 \Big[\big|U_N(z^{\nu, N}(h)) - \widetilde{U}(\tilde{z}^{\nu}(h))\big|
   \cdot
 \textcolor{revisecolor}{\mathbbm{1}_{\{\tau^{\nu}_N \ge h\}}} \Big]
  + 2\big(h M_\phi + \lambda M_J\big) \textcolor{revisecolor}{\mathbf{P}(\tau^{\nu}_N < h)} \\
   \le & \mathbf{E}^\nu_{\textcolor{revisecolor}{z_N}}\!\left[\Big(\int_0^h L_\phi\big|z^{\nu, N}(s) -
   \tilde{z}^\nu(s)\big|\, ds + \lambda \big| U_N(z^{\nu, N}(h)) -
   \widetilde{U}(\tilde{z}^{\nu}(h))\big|\Big)\cdot
 \textcolor{revisecolor}{\mathbbm{1}_{\{\tau^{\nu}_N \ge h\}}} \right] \\
 & + 2\big(h M_\phi + \lambda M_J\big) \textcolor{revisecolor}{\mathbf{P}(\tau^{\nu}_N < h)} \\
       \le & 3 L_\phi \delta e^{L_F h} h + \lambda G_2(3\delta e^{L_Fh}) +
 2\delta^{-1} \big(h M_\phi + \lambda M_J\big) C_{h,N}\,.
    \end{align*}
    In the above, we have used the facts that 
    \textcolor{revisecolor}{
    \begin{align*}
      & z \in \Omega_g~\Longrightarrow ~\tilde{z}^\nu(h) \in \Omega_g\,, \\
      & \tau^{\nu}_N \ge h ~\Longrightarrow ~\sup_{0 \le s \le h} |z^{\nu,N}(s) -
      \tilde{z}^\nu(s)| \le 3\delta e^{L_Fh} \le \gamma ~\Longrightarrow~
      z^{\nu,N}(h) \in \Omega^\nu_{\gamma, z, [0,h]} \subseteq \Omega\,.
    \end{align*}
  }
    Since the same upper bound holds for $\widetilde{U}(z) - U_N(z_N)$ as well, taking the supremum over $z_N \in \mathbb{X}_N\cap \Omega$, $z \in
    \Omega_g$, such that $|z_N-z| < \delta$, we obtain
    \begin{align*}
      G_2(\delta) \le 3 L_\phi \delta e^{L_F h}h  + \lambda
      G_2\big(3\delta e^{L_Fh}\big) + 2\delta^{-1} (h M_\phi + \lambda M_J) C_{h,N}\,,
    \end{align*}
    as long as $\delta \le \frac{\gamma}{3} e^{-L_F h}$, $N>N'$. 

    \textcolor{revisecolor}{Notice that 
    $\Omega^u_{\gamma, z, [0, kh]} \subseteq \Omega$ implies $\Omega^u_{\gamma,
    z', [ih, kh]} \subseteq \Omega$ for the same $\gamma>0$, where $z' =
  \tilde{z}^u(ih)$, $0 < i < k$, $\forall u \in \mathcal{U}_{o}$.}
    Therefore, iterating the above inequality for $k$ times and using the inequality $G_2 \le 2M_J$,  it gives 
    \begin{align*}
      G_2(\delta) \le&  3 L_\phi \delta e^{L_F h}h 
      \frac{(3\lambda e^{L_Fh})^k-1}{3\lambda e^{L_Fh}-1} + 2\lambda^k M_J \\
      & + 2\delta^{-1} \big(h M_\phi + \lambda M_J\big) 
      C_{h,N}\frac{3^{-k}\lambda^k e^{-kL_Fh}-1}{3^{-1}\lambda e^{-L_Fh}-1} \\
      \le & 3 L_\phi \delta e^{L_F h}h 
      \frac{(3\lambda e^{L_Fh})^k-1}{3\lambda e^{L_Fh}-1} + \frac{\epsilon}{3} \\
      & + 2\delta^{-1} \big(h M_\phi + \lambda M_J\big) 
      C_{h,N}\frac{3^{-k}\lambda^k e^{-kL_Fh}-1}{3^{-1}\lambda e^{-L_Fh}-1} \\
    \end{align*}
    for $\delta \le 3^{-k} e^{-kL_F h} \gamma$, $N>N'$. 

    Since Assumptions $\ref{assump-1}-\ref{assump-2}$ imply that $C_{h,N} \rightarrow 0$ as $N\rightarrow \infty$, 
    we can first choose $\delta$ and then $N'$ such that $G_2(\delta) \le \epsilon$ when $N \ge N'$. The conclusion follows readily.
  \item
    We estimate the cost using the definition (\ref{cost-j-general-infty-discount}).
    Notice that the constant $\lambda = e^{-\beta h}<1$ and that the open loop
    control $u$ is $\epsilon'$-optimal for the deterministic optimal control problem (\ref{J-discount-infty}). For any $k \ge 1$, recalling the
    stopping time in (\ref{tau-u-n}) and Assumption~\ref{assump-4},  we obtain
    \begin{align*}
      & J_N(z_N, u) \\
      \le&
      \mathbf{E}_{z_N}^u\!\left[\sum_{j=0}^{k} \lambda^j \left( r(z^{u,N}(t_j), \nu_j) + \int_{t_j}^{t_{j+1}} \phi(z^{u,N}(s), \nu_j)\, ds \right)\right] 
       + \sum_{j=k+1}^\infty \lambda^j (M_r + h M_{\phi})  \\
   \le& \sum_{j=0}^{k} \lambda^j \left(r(\tilde{z}^{u}(t_j), \nu_j) + \int_{t_j}^{t_{j+1}} \phi(\tilde{z}^{u}(s), \nu_j)\, ds \right)
       + \sum_{j=k+1}^\infty \lambda^j (M_r + h M_{\phi}) \\
      &+  \mathbf{E}_{z_N}^u\bigg[\sum_{j=0}^{k} \lambda^j \big|r(z^{u,N}(t_j), \nu_j) - r(\tilde{z}^{u}(t_j), \nu_j)\big|\bigg]\\
      &+  \mathbf{E}_{z_N}^u\bigg[\sum_{j=0}^{k} \lambda^j \int_{t_j}^{t_{j+1}} \big|\phi(z^{u,N}(s), \nu_j) - \phi(\tilde{z}^{u}(s), \nu_j)\big|ds\bigg] \\
   \le& \sum_{j=0}^{\infty} \lambda^j \left(r(\tilde{z}^{u}(t_j), \nu_j) + \int_{t_j}^{t_{j+1}} \phi(\tilde{z}^{u}(s), \nu_j)\, ds \right)
       + 2\sum_{j=k+1}^\infty \lambda^j (M_r + h M_{\phi}) \\
      &+  \mathbf{E}_{z_N}^u\bigg[\Big(\sum_{j=0}^{k} \lambda^j
     \big|r(z^{u,N}(t_j), \nu_j) - r(\tilde{z}^{u}(t_j),
     \nu_j)\big|\Big)\textcolor{revisecolor}{\cdot \mathbbm{1}_{\{\tau^{u}_N \ge kh\}}}\bigg]\\
     &+  \mathbf{E}_{z_N}^u\bigg[\Big(\sum_{j=0}^{k} \lambda^j \int_{t_j}^{t_{j+1}} \big|\phi(z^{u,N}(s), \nu_j) - \phi(\tilde{z}^{u}(s), \nu_j)\big|ds
     \Big)\textcolor{revisecolor}{\cdot \mathbbm{1}_{\{\tau^{u}_N \ge kh\}}}\bigg] \\
     & + 2\big(M_r+hM_{\phi}\big) \textcolor{revisecolor}{\mathbf{P}(\tau^{u}_N < kh)} \sum_{j=0}^{k} \lambda^{j} \\
     \le   & \mathbf{E}_{z_N}^u\left[\Big(\sup_{0 \le s \le kh} |z^{u,N}(s) -
      \tilde{z}^u(s)|\Big)\cdot \textcolor{revisecolor}{\mathbbm{1}_{\{\tau^{u}_N \ge kh\}}}\right] (L_r + h L_\phi) \sum_{j=0}^{k} \lambda^j \\
      & + \widetilde{J}(z, u) +
      2\frac{M_r+hM_{\phi}}{1-\lambda}\Big(\lambda^{k+1} + \textcolor{revisecolor}{\mathbf{P}(\tau^{u}_N < kh)}\Big) \\
   \le& \widetilde{U}(z) + \epsilon' + \mathbf{E}_{z_N}^u\left[\Big(\sup_{0 \le
   s \le kh} |z^{u,N}(s) - \tilde{z}^u(s)|\Big)\cdot
 \textcolor{revisecolor}{\mathbbm{1}_{\{\tau^{u}_N \ge kh\}}}\right] 
   \frac{L_r + h L_\phi}{1-\lambda} \\
   & + 2M_J \bigg(\lambda^{k+1}+ \textcolor{revisecolor}{\mathbf{P}(\tau^{u}_N < kh)}\bigg)\\ 
   \le& U_N(z_N) + |U_N(z_N) - \widetilde{U}(z)|  + 2 M_J \bigg(\lambda^{k+1}+
   \textcolor{revisecolor}{\mathbf{P}(\tau^{u}_N < kh)}\bigg) + \epsilon' \\
   & +\mathbf{E}_{z_N}^u\left[\Big(\sup_{0 \le s \le kh} |z^{u,N}(s) -
  \tilde{z}^u(s)|\Big)\cdot \textcolor{revisecolor}{\mathbbm{1}_{\{\tau^{u}_N \ge kh\}}}\right] 
   \frac{L_r + h L_\phi}{1-\lambda}\,. 
 \end{align*}
 Now for $\epsilon > \epsilon'$, we can first choose $k > 0$ and then
 obtain $\gamma>0$ using the property of the subset $\Omega_g$ with $T=kh$. Applying 
 Theorem~\ref{thm-2} on the time interval $[0, kh]$, (\ref{thm-5-ineq-1}) and
 Assumptions~\ref{assump-1}-\ref{assump-2}, we can find $N' \in \mathbb{N}$
 and $\delta>0$,
 such that 
 \begin{align*}
   J_N(z_N,u) \le U_N(z_N) + \epsilon \,,
 \end{align*}
 if $z \in \Omega_g$, $z_N \in \mathbb{X}_N \cap \Omega$, and $|z-z_N|<
 \delta$.
The conclusion follows immediately.
  \end{enumerate}
  \end{proof}

\section{Algorithms}
\label{sec-hybrid}
In this section, we discuss some numerical aspects of the
control problems studied in this paper.
\textcolor{revisecolor}{The main motivation is that, although our previous analysis suggested that the
optimal open loop control of the limiting ODE system is a reasonable approximation
whenever $N$ is sufficiently large, in applications it is often difficult to
verify how large $N$ should be such that the approximation is satisfactory.
On the other hand, the optimal feedback control becomes increasingly difficult
to compute due to the rapid growth of the state space when $N$ is large. The main purpose of this
section is to construct an algorithm which further improves the
optimal open loop policy by utilizing the information of the system state
(i.e.~by adding feedback), while avoiding the curse of dimensionality that is
inherent to  the dynamic programming approach.}

In contrast to the previous sections, this part involves some heuristics, and
we confine ourselves to the optimal control problem for a Markov jump
process on a finite time-horizon $[0, T]$ with a finite control set $\mathcal{A}$. To this end, we assume that the parameter $N$ is large, and we remind the reader again that $x^{u,N}$
denotes the original Markov jump process with a control policy $u$ and $z^{u,N} =
N^{-1 }x^{u, N}$ stands for the normalized density process. The state spaces on which $x^{u,N}$ and $z^{u,N}$ live are denoted by
$\mathbb{X}$ and $\mathbb{X}_N$, respectively.

\subsection{Tau-leaping method}
\label{sub-tau-leaping}
In order to compute the optimal control policy, it is necessary to simulate
trajectories of the underlying Markov jump process and to estimate the corresponding cost.
The stochastic simulation algorithm (SSA) \cite{Gillespie1976_ssa, Gillespie1977_ssa, ssck_gillespie} is a typical
Monte Carlo method: At each time step, it determines the waiting
time in (\ref{waiting-time}) as well as the next state according to the jump rates between
the current state and the next possible states. When $N$ is large, however, the system becomes numerically stiff
 because a large number of jump events occur within a short time interval.
Since SSA traces every single jump event of the system, the effective step size of the method decreases rapidly, which renders the SSA  inefficient.

As a remedy to this problem, the tau-leaping method \cite{gillespie2001, stiffness_implicit_tau,tau_size_selection,randon_correction_tiejun,ssck_gillespie} aims at increasing the effective step size by updating the state vector according to the transitions that may occur within a given time interval. Roughly speaking, instead of computing
the waiting time and the next jump, the idea of the tau-leaping method
is to answer ``how many times will each type of jumps occur within a given time
interval'' and then update the state vector accordingly. With a proper and carefully chosen step size \cite{tau_size_selection}, the tau-leaping method can
approximate the SSA quite well and meanwhile reduce the simulation time up to
$1$ or $2$ orders of magnitude. In our implementation (see the numerical examples in Section~\ref{sec-examples}), we use the explicit
tau-leaping method where the leaping time step sizes are determined according to \cite{tau_size_selection}.

\subsection{State space truncation}
\label{sub-feedback}

The computational complexity for solving the feedback optimal control problem
is proportional to the number of states in $\mathbb{X}$ considered (which
is of order $N^n$, with $n$ being the number of species).
Therefore, truncating the state space $\mathbb{X}$
is necessary before numerically solving the optimal feedback control.
One such approach to truncate the state space is to consider only states
$x=(x^{(1)}, x^{(2)}, \cdots, x^{(n)}) \in
\mathbb{X}$ that lie within a hypercube defined by $x^{(i)} \in [c_{i} N,
c'_{i}N]$, $1 \le i \le n$, where $0 \le c_{i} < c_{i}'$ are estimations of
the lower and upper bounds of the average densities per species. The cut-off values $c_{i},\, c_{i}'$ could, e.g., be determined by launching independent simulations of the jump process controlled by candidate open loop control policies.

Once a truncated state space $\mathbb{X}_{cut}$ has been constructed, then a simple algorithm (Algorithm~\ref{algo-feedback}) to
compute the optimal feedback control policy can be based on the necessary optimality
condition (\ref{optimality-eqn}) with the terminal condition
$U_N(\cdot,K) = \psi$ where the expectation value in (\ref{optimality-eqn}) is estimated by a Monte Carlo average. If $T$ is the total simulation time, $\Delta t>0$ is the average time step size used to generate trajectories (e.g.~by SSA or tau-leaping) and we
use $M$ independent realizations for each starting state to approximate the expectation value, the overall computational cost of
Algorithm~\ref{algo-feedback} is $\mathcal{O}\big(M\cdot |\mathcal{A}|
\cdot|\mathbb{X}_{cut}|\cdot\lceil T/\Delta t \rceil\big)$.
\begin{algorithm}[h]
  \caption{Compute the optimal feedback control policy on truncated state
  space\label{algo-feedback}}
  \begin{algorithmic}[1]
    \State
    Set $U_N(\cdot, K) = \psi$.
    \For{$k \gets K-1 \textrm{ to } 0$}
    \For{each $x \in \mathbb{X}_{cut}$}
    \For{each $\nu \in \mathcal{A}$}
    \State
    Starting from $x$ at time $t_k$, generate $M$ trajectories $x^{\nu, N}_i$
    till time $t_{k+1}$, such that $x^{\nu,N}_i(t_{k+1}) \in \mathbb{X}_{cut}$
    (generate new realization if $x^{\nu,N}_i(t_{k+1}) \notin\mathbb{X}_{cut}$).
    \State
    Let $z = x/N$, $z^{\nu,N}_i = x^{\nu,N}_i/N$, compute
    \begin{align*}
      \textcolor{revisecolor}{Q}(\nu) = \frac{1}{M} \sum_{i=1}^M
      \left(r(z, \nu) + \int_{t_k}^{t_{k+1}} \phi(z^{\nu, N}_i(s), \nu) \,ds +
      U_N(z^{\nu,N}_i(t_{k+1}), k+1)\right).
    \end{align*}
    \EndFor
\State
Set $\nu_k(z) = \argmin\limits_{\nu \in \mathcal{A}} \textcolor{revisecolor}{Q}(\nu)$ and $U_N(z,k)
= \min\limits_{\nu \in \mathcal{A}} \textcolor{revisecolor}{Q}(\nu)$.
    \EndFor
    \EndFor
  \end{algorithmic}
\end{algorithm}

\subsection{Hybrid control}
\label{sub-hybrid}

Solving the feedback control problem may be computationally infeasible even
after truncation of the state space. As already mentioned at the beginning of
this section, we will utilize an adaptive state space truncation strategy
which exploits information from the (optimal) open loop control policies.
The key idea is to assume that the typical states visited
by the jump process when an optimal open loop policy is applied are also
important states for computing a sufficiently accurate feedback control policy. To
this end, the following algorithm generates states (for each control stage)
whose densities are
scattered around the density values of the system controlled by reasonable open loop control policies.

\subsubsection*{Adaptive truncation strategy}

Let $\mathcal{S}_j \subset \mathbb{X}$
denote the finite state set at the $j$-th control stage after truncation, $0 \le j
< K$. We construct sets $\mathcal{S}_j$ using the following steps.

\begin{enumerate}
  \item
    \ul{Compute ``good'' (open loop) candidate policies for the Markov jump
      process on time {\color{revisecolor}$[0, T]$}.}\label{goodpolicies}
    A control policy $u_k \in \mathcal{U}_{o, 0}$ is called ``good'' if $k <
    n_{ol}$ and $J_N(u_k) \le (1 + \epsilon_{ol}) J_N(u_0)$
    for appropriately chosen $n_{ol} \in \mathbb{N}$, $n_{ol}\ge1$ and
    $\epsilon_{ol} \ge 0$ (especially, $u_0$ is the optimal open loop control
    policy for the jump process).
    \textcolor{revisecolor}{Sort all ``good'' control policies $u_k \in \mathcal{U}_{o,0}$
    by their costs in non-decreasing order.}
  \item
\ul{Compute statistics of the controlled jump processes under ``good'' policies.}
For each ``good'' open loop policy $u_k$, record the average densities $z_{k,j} \in
\mathbb{R}^n$ and the standard deviations $\sigma_{k,j} \in \mathbb{R}^n$ of
the controlled normalized density process at each stage $j$, $0 \le j < K$.
\item
\ul{Compute the truncated sets $\mathcal{S}_j$.}
For each ``good'' open loop policy $u_k$, generate $M_{ol}$ trajectories and
add the states $x \in \mathbb{X}$ of each trajectory at stage $j$ to the set $\mathcal{S}_j$ if
\begin{align}
  x^{(i)}/N
  \in \big[z^{(i)}_{k,j} - \zeta\sigma^{(i)}_{k,j}, z^{(i)}_{k,j} + \zeta\sigma^{(i)}_{k,j}\big]\,, \quad \forall i\in\{1, \ldots,n\}
\label{closeness}
\end{align}
where $\zeta>0$ is a pre-selected constant, and $x^{(i)}, z^{(i)}_{k,j},
\sigma^{(i)}_{k,j}$ are the
$i$th components of $x$, $z_{k,j}$, $\sigma_{k,j} \in \mathbb{R}^n$ .
\end{enumerate}

\begin{remark}
A few remarks about the above algorithm are in order.
  \begin{enumerate}
    \item
      In the case that the jump process starts from a fixed initial value $x$,
      $\mathcal{S}_0=\{x\}$ is a singleton containing only the initial state.
    \item
      Step $1$ can be accomplished by enumerating all possible (finite) $u_k \in
\mathcal{U}_{o,0}$ and computing the cost $J(u_k)$ by simulating trajectories
using SSA or the tau-leaping method. \textcolor{revisecolor}{Parameters $n_{ol}$ and $\epsilon_{ol}$
are introduced in order to determine the number of ``good'' open loop policies which might carry
important information and will be used to construct the truncated state sets $\mathcal{S}_j$ in
Steps 2, 3 above}. By the central limit theorem (see
\cite{kurtz_limit_thm_diffusion_approx}), the state distributions of
the jump process under ``good'' open loop policies is approximately Gaussian whenever $N$ is
large, hence the standard statistical estimators for the means and standard
deviations computed in Step~$2$ can capture the distributions to a good approximation.
\item
Ideally, for every ``good'' control policy $u_k$ and every control
stage $j$, we would like to record all possible (i.e.~reachable) discrete states that satisfy
(\ref{closeness}).
However, this set may be very large. Therefore, we sample these reachable
states in Step $3$ with a tunable parameter $M_{ol}$, which can control the number of states in $\mathcal{S}_j$.
The drawback is that important states may be missing when they are
not visited by the $M_{ol}$ trajectories (see below for a patch).
  \end{enumerate}
  \label{rmk-hybrid}
\end{remark}

\subsubsection*{Hybrid control policy}
Having the state sets $\mathcal{S}_j$ at hand, the task of computing a feedback control policy is
to determine maps $\nu_j : \mathcal{S}_j \rightarrow \mathcal{A}$, $0 \le j <
K$, according to a modification of Algorithm~\ref{algo-feedback}.
Keeping in mind that the sets $\mathcal{S}_j$ may be only partially sampled, it is quite possible that, at some control stage
$j$, the system fails to reach $\mathcal{S}_j$ under control $\nu_{j-1}$. To remedy this defect, we propose the following strategy :
Denote the best available open loop policy as $u_0 = (\nu^0_0,
\nu^0_1, \cdots, \nu^0_{K-1})$, and consider the $j$-th control stage,
$0 \le j < K$ where we suppose that the system has ended up in a state $x
\notin\mathcal{S}_j$. Further let $x'$ be one of the nearest states to $x$ among all states in $\mathcal{S}_j$, i.e.~$x' \in \argmin_{x' \in \mathcal{S}_j} |x-x'|$. Then we apply the control $\nu_j(x')$ if $|x-x'|/N \le
\epsilon_{near}$, where $\epsilon_{near}$ is a cut-off parameter, and otherwise we use $\nu_j^0$. In other words, we replace the original candidate control by the modified control policy
$u=(\bar{\nu}_0, \bar{\nu}_1, \cdots, \bar{\nu}_{K-1}) \in \mathcal{U}_{f,0}$ with
\begin{align}
 \bar{\nu}_j \colon \mathbb{X} \rightarrow \mathcal{A}\,,\quad  \bar{\nu}_j(x) =
  \begin{cases}
    \nu_j(x')\,, & \mbox{if}~|x' - x|/N < \epsilon_{near} \\
      \nu_j^0\,, & otherwise\,.
    \end{cases}
    \label{def-hybrid-map}
\end{align}
In the following, we keep using $\nu_j$ instead of $\bar{\nu}_j$
when no ambiguity exists. This strategy can prevent problems that arise
when the feedback policy $\nu_{j}$ at stage $j$ cannot be computed because some rare, but important states are missing due
to the insufficient sampling when constructing the set $\mathcal{S}_j$. Notice that the algorithmic modification can be easily switched off by setting $\epsilon_{near} =
0$. In this case, the feedback policy is applied only when the states belong to $\mathcal{S}_j$, while open loop policies are applied otherwise.
In agreement with the notation used in Sections~\ref{sec-intro}--\ref{sec-analysis-ocp}, we define
\begin{align}
  \mathcal{U}_{h,k} =& \{(\bar{\nu}_k, \bar{\nu}_{k+1}, \cdots, \bar{\nu}_{K-1}) ~|~ \nu_j :
    \mathcal{S}_j \rightarrow \mathcal{A}\,,\, k\le j < K\}\,,\quad 0 \le k < K\,,
\end{align}
as the set of all hybrid control policies, where $\bar{\nu}_j$ is defined as
in (\ref{def-hybrid-map}). The algorithmic task now boils down to finding the
optimal hybrid control policy $u \in \mathcal{U}_{h, 0}$. In order to solve this task, we consider
the cost function $J_N(z,u, k)$ as in (\ref{fb-j-u-finite}) and define a modified value function as
\begin{align}
U_N(z,k) =& \inf_{u \in \mathcal{U}_{h,k}} J_N(z,u, k)\,, \quad Nz \in
  \mathcal{S}_k\,.
\end{align}

By definition, the value function satisfies the terminal condition $U_N(z,K) = \psi(z)$ and a modified Bellman equation as a necessary optimality condition :
\begin{align}
  \begin{split}
    U_N(z,k) = &\min_{\nu \in \mathcal{A}} \mathbf{E}^\nu\!\left[
  \sum_{j=k}^{\tau-1} \Big(r\big(z^{u,N}(t_j), \nu_j(z^{u,N}(t_j))\big)\right. \\
  &+ \left.\int_{t_{j}}^{t_{j+1}} \phi\big(z^{u,N}(s), \nu_j(z^{u,N}(t_j))\big)\,
  ds\Big) +
U_N\big(z^{u,N}(t_\tau), \tau\big) \right],\quad Nz \in \mathcal{S}_k\,.
\end{split}
\label{hybrid-optimality}
\end{align}
where $z^{u, N}(t_k) = z$, $u=(\nu_k, \nu_{k+1}, \cdots, \nu_{K-1})$ with $\nu_k=\nu$ and $(\nu_{k+1}, \cdots,
\nu_{K-1}) \in \mathcal{U}_{h, k+1}$ is the optimal hybrid control policy
starting from stage $k+1$. The terminal index $\tau$ is a stopping time, depending on the particular realization, and is either the
smallest stage index such that $k < \tau < K$ and $Nz^{u,N}(t_{\tau}) \in
\mathcal{S}_{\tau}$, or $\tau=K$ otherwise.
Notice that in (\ref{hybrid-optimality}), only values of $U_N(z, k)$ at states
$z$ such that $Nz \in \mathcal{S}_k$ are involved. Based on it, we can compute the optimal hybrid
control policy by backward iterations in Algorithm~\ref{algo-hybrid} below.
\begin{algorithm}[H]
  \caption{Compute the optimal hybrid control policy\label{algo-hybrid}}
  \begin{algorithmic}[1]
    \State
    Set $U_N(\cdot, K) = \psi$.
    \For{$k \gets K-1 \textrm{ to } 0$}
    \For{each $x \in \mathcal{S}_{k}$}
    \For{each $\nu \in \mathcal{A}$}
    \State
    Set $u=(\nu, \nu_{k+1}, \cdots, \nu_{K-1})$, where $\nu_j$ is the optimal
    policy function on the $j$-th stage, $k< j< K-1$ (already computed).
    \State
    Generate $M$ trajectories $x^{u, N}_i$ from time $t_k$ to $t_{\tau_i}$ where
    $k<\tau_i$ and $t_{\tau_i}$ is either the first time when $x^{u,
    N}_i(t_{\tau_i}) \in \mathcal{S}_{\tau_i}$ or $\tau_i=K$, $1 \le i \le M$.
    \State
    Let $z = x / N$, $z^{u,N}_i=x^{u,N}_i/N$, compute
    \begin{align*}
      \textcolor{revisecolor}{Q}(\nu) = &\frac{1}{M} \sum_{i=1}^M \Big\{\sum_{j=k}^{\tau_i - 1}
	\Big[r(z^{u,N}_i, \nu_j(z^{u,N}_i(t_j))) + \int_{t_j}^{t_{j+1}}
	  \phi\big(z^{u,
	N}_i(s), \nu_j(z^{u,N}_i(t_j))\big) \,ds\Big] \\
      &+ U_N(z^{u,N}_i(t_{\tau_i}), \tau_i)\Big\}.
    \end{align*}
    \EndFor
\State
Set $\nu_k(z) = \argmin\limits_{\nu \in \mathcal{A}} \textcolor{revisecolor}{Q}(\nu)$ and $U_N(z,k)
= \min\limits_{\nu \in \mathcal{A}} \textcolor{revisecolor}{Q}(\nu)$.
    \EndFor
    \EndFor
  \end{algorithmic}
\end{algorithm}

A computational bottleneck in computing the hybrid control policy for
$\epsilon_{near} > 0$ is the solution of the minimization problem $\argmin_{x'
\in \mathcal{S}_j} |x-x'|$, i.e. to find the nearest neighbor of $x$ in
$\mathcal{S}_j$. The computational complexity of a direct minimization based on a pairwise comparison is $\mathcal{O}\big(|\mathcal{S}_j|\big)$, which would increase the computational
cost of Algorithm~\ref{algo-hybrid} to
$\mathcal{O}\big(M \cdot |\mathcal{A}|\cdot |\mathcal{S}_j|^2 \cdot \lceil
T/\Delta t \rceil \big)$ (assuming
$\tau=k+1$ and $|\mathcal{S}_j|$ are constant for simplicity). However, by
employing the so-called $k$-$d$ tree data structure \cite{Bentley1975} to store the states in $\mathcal{S}_j$, the computational complexity of
finding the nearest neighbor can be reduced to $\mathcal{O}\big(\ln |\mathcal{S}_j|\big)$,
by which the total computational cost is of the order
\[
\mathcal{O}\big(M\cdot|\mathcal{A}|\cdot |\mathcal{S}_j| \cdot \ln
|\mathcal{S}_j| \cdot \lceil T /\Delta t \rceil\big)\,.
\]
In the numerical examples in Section~\ref{sec-examples} below, our implementation uses the ANN (Approximate Nearest Neighbor) library \cite{ANNmanual}, which provides operations
on $k$-$d$ trees and efficient algorithms for finding
the first $k$-th ($k=1$ in our case) nearest neighbors.

\section{Numerical examples}
In this section, we consider two numerical examples in order to
demonstrate the analysis and the algorithms discussed in the previous
sections.
\label{sec-examples}
\subsection{Birth-death process}
First, we consider the one-dimensional birth-death process which can be described as
  \begin{align}
    x-1 \xleftarrow{\kappa_-} x \xrightarrow{\kappa_+} x+1\,,
    \label{birth-death-process}
\end{align}
where $x \in \mathbb{N}^+$. We suppose that the process has a density dependent birth rate
which is $x\cdot\kappa_+ $ when the current state is $x$ and, similarly, $x \cdot \kappa_-$ for the
death rate.
We fix $T=3.0$ and $K=3$, i.e. the control can be switched at time $t=0.0,
1.0, 2.0$. Two control/parameterization sets $\mathcal{A}_1$, $\mathcal{A}_2$ shown in
Table~\ref{tab-birth-death-policy} are considered. Each set contains two controls
$\nu^{(0)}$, $\nu^{(1)}$ that affect the jump rates $\kappa_-$ and $\kappa_+$. For the optimal control problem,
let $x^{u,N}(t)$ be system's state at $t \in [0, T]$ with control $u \in
\mathcal{U}_{\sigma, 0}$
and set $r(z, \nu) = \psi(z) = 0$, $\phi(z, \nu) = |z-1.0|$ for $\nu \in
\mathcal{A}_i$, $i=1,2$, leading to the cost function
\begin{align}
  J_N(z_0, u) = \mathbf{E}_{z_0}^u\!\left[\int_0^{3} |z^{u,N}(t) - 1.0|\, dt\right]  \,,\quad
  u \in \mathcal{U}_{\sigma, 0}\,,
  \label{birth-death-cost-J}
\end{align}
with $z^{u,N}(t) = x^{u,N}(t)/N$ and $z^{u,N}(0) = z_0$. \textcolor{revisecolor}{Minimizing a cost as in
(\ref{birth-death-cost-J}) may arise when one wishes to keep the density
of the system (\ref{birth-death-process}) not far away from $1.0$ by
controlling the jump rates (depending on system's states)}. Fixing $z_0 = 1.2$ and one
of two control sets $\mathcal{A}_1$, $\mathcal{A}_2$, we shall compare the optimal open loop and feedback control policies
for the jump process as $N$ increases, as well as the optimal (open loop) control policy for the
related deterministic ODE
\begin{align}
  \frac{d\tilde{z}^u(t)}{dt} = (\kappa_+ - \kappa_-) \tilde{z}^u(t), \qquad
  \tilde{z}^u(0) = 1.2\,.
  \label{birth-death-ode}
\end{align}

\subsubsection*{Open loop control}
In the case of open loop control, there are $|\mathcal{U}_{o, 0}|=2^3 = 8$
different control policies in total for both the jump process
(\ref{birth-death-process}) and the deterministic ODE (\ref{birth-death-ode}),
regardless of the value $N$, since one of the two
controls $\nu^{(0)}, \nu^{(1)}$ can be selected at any of the three control stages. The optimal control
is obtained by simply comparing the costs of all $8$ possible policies. In
Figure~\ref{ol-birth-death-fig}, the evolutions of the means and the standard
deviations of the density $z^{u,N}(t)$ are
shown for different $N$. For both control sets $\mathcal{A}_1$,
$\mathcal{A}_2$, it is observed that the standard deviations decrease and the means get
closer to that of the ODE controlled by the optimal control policy as $N$ grows larger.
For the control set $\mathcal{A}_2$, we observe that the suboptimal policy
$u_2=(1,1,0)$ leads to a cost which is close to the optimal cost (that is determined by choosing the optimal policy $u_1=(1,0,1)$) of the ODE system. (For the ease of notation,
we use the index of the control action to denote the control policy, e.g.
$(1,0,1)$ means $(\nu^{(1)}, \nu^{(0)}, \nu^{(1)})$.) For
the jump processes with $N=40$ or $N=100$, $u_2$ performs even better than $u_1$; cf.~Figure~\ref{cost-birth-death-fig}.

\subsubsection*{Feedback control}
Now we turn to the feedback control problem, in which case
the optimal control policy can be obtained
by iterating the dynamic programming equations (\ref{optimality-eqn})--(\ref{terminal-value-fun})
by backward iterations. As the state space $\mathbb{X} = \mathbb{N}^+$ is
infinite, finite state truncation is necessary for
Algorithm~\ref{algo-feedback} to work.
\textcolor{revisecolor}{Based on a rough estimation of the solution of ODE (\ref{birth-death-ode}),
and taking account of the form of the cost functional (\ref{birth-death-cost-J}), the initial condition
$z_0 = 1.2$, as well as the jump rates $\kappa_+, \kappa_-$, we truncate the space into the finite subset
$\mathbb{X}_{cut}=\{N/2, N/2+1, \cdots, 2N\} \subset \mathbb{N}^+$ (see discussions in Subsection~\ref{sub-feedback}).}

Figure~\ref{fb-birth-death-fig} shows the means and the standard
deviations of $z^{u, N}(t)$ under the optimal feedback control policy as a function of time for increasing $N$.
Generally, for both control sets $\mathcal{A}_1$ and $\mathcal{A}_2$, the optimal feedback control policies lead
to smaller costs as compared to the optimal open loop controls (Figure~\ref{cost-birth-death-fig}).
Specifically, we observe in Figure~\ref{fb-birth-death-fig}(a) that, for the
control set $\mathcal{A}_1$, the standard deviations
decrease and the means converge to the densities of the optimally controlled ODE system (by
$u_2$) as $N$ increases. For the control set $\mathcal{A}_2$, due to the
existence of the competing
policy $u_2$ (in this case, $u_1=(1,0,1)$ is optimal for the ODE system), some states with density close to $z=1.0$ may select the control
$\nu^{(1)}$ at stage $t=1.0$, while others select $\nu^{(0)}$ (see Table~\ref{tab-birth-death-policy}), which leads
to a significant rise in the standard deviation at the next control stage $t=2.0$ (see Figure~\ref{fb-birth-death-fig}(b));
we moreover notice that the convergence of the empirical means of the controlled jump process at time $t=2.0$ to
the ODE solution is slower than in case of the control set
$\mathcal{A}_1$ as $N$ increases. The last observation is in agreement with
Figure~\ref{set-2-bd-fig}(a) which shows the bimodal
probability density function of the optimally controlled process at time $t=2.0$ that becomes
even more pronounced for larger values of $N$. Nevertheless, Figure~\ref{cost-birth-death-fig} clearly shows the convergence of the
cost values of both open loop and feedback control policies as $N$ increases, in line with the theoretical prediction. 
Also notice that, in Figure~\ref{cost-birth-death-fig}(b), the optimal costs using feedback and
hybrid policies for finite $N$ can be smaller than the optimal cost of the limiting ODE system, i.e. the convergence may be not monotonically decreasing
from above. As a final demonstration, Figures~\ref{cost-birth-death-fig}(a)
and \ref{set-2-bd-fig}(b) show a comparison of the SSA and the tau-leaping methods,
with the clear indication that the results using the tau-leaping method are close to the SSA prediction, but  at much lower computational cost.
\begin{table}[pt]
  \centering
  \begin{tabular}{c|c|cl|ll}
    \hline
    \hline
    \multirow{2}{*}{No.} & \multirow{2}{*}{control} & \multicolumn{2}{c|}{$\mathcal{A}_1$} &
    \multicolumn{2}{c}{$\mathcal{A}_2$} \\
    & & $\kappa_-$ & $\kappa_+$ & $\kappa_-$ & $\kappa_+$ \\
    \hline
    $0$ & $\nu^{(0)}$ & \underline{$0.6$} & $1.0$ &  \underline{$0.8$} & $1.0$ \\
    $1$ & $\nu^{(1)}$  & $1.0$ & $0.8$ & $1.0$ & $0.8$ \\
    \hline
    \hline
  \end{tabular}
  \caption{Two different control sets $\mathcal{A}_1$, $\mathcal{A}_2$ for the birth-death jump process. Each
  set contains two controls where the underlined entries indicate different control actions in $\mathcal{A}_1$ and $\mathcal{A}_2$. \label{tab-birth-death-policy} }
\end{table}
\begin{figure}[h]
\centering
  \begin{tabular}{cc}
    \subfigure[Control set
    $\mathcal{A}_1$]{\includegraphics[width=6.0cm]{./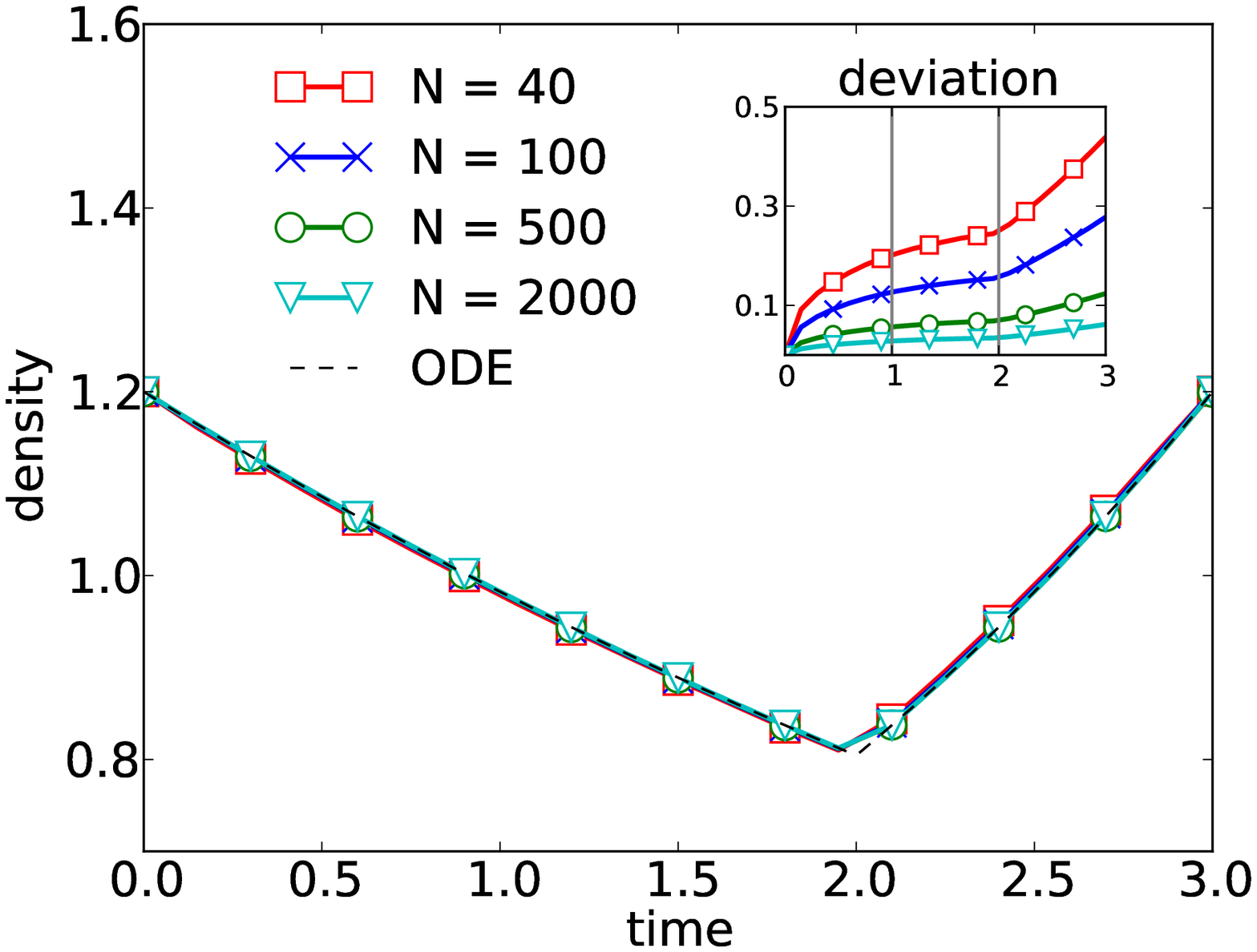}} &
    \subfigure[Control set $\mathcal{A}_2$]{\includegraphics[width=6.0cm]{./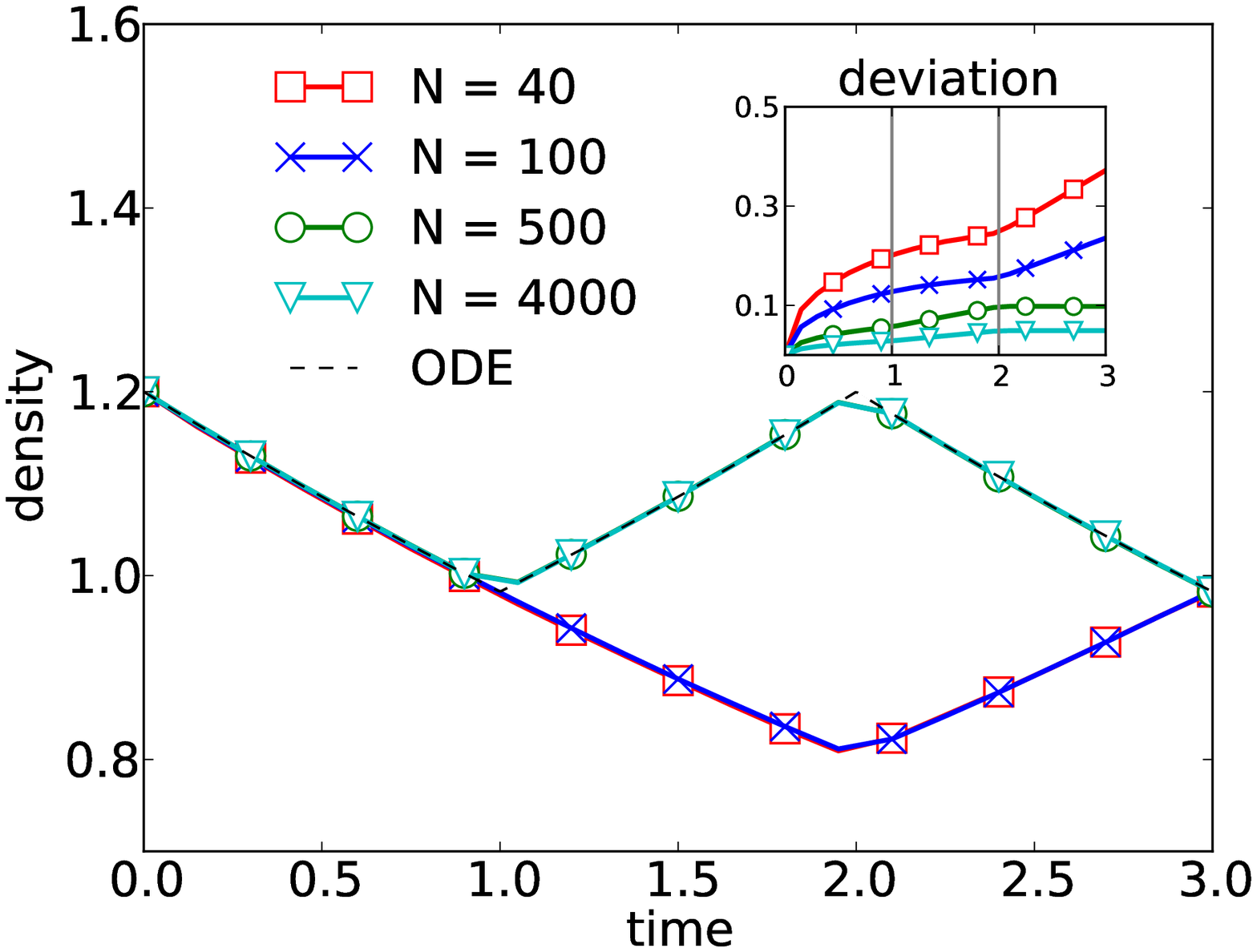}}
\end{tabular}
\caption{Birth-death process. Evolution of the empirical means and the standard
  deviations (inset plot) of the normalized density process $z^{u,N}$
  under the optimal open loop control policies in comparison with the ODE solutions.
  Here $N$ is the scaling number and controls are switched at times $t = 0.0,
  1.0, 2.0$. (a) Control Set $\mathcal{A}_1$. The optimal policy is
$u_2 = (1,1,0)$ both for the jump process for all $N$ and for the ODE system. (b)
Control Set $\mathcal{A}_2$. The optimal
policy is $u_2=(1,1,0)$ for the jump process with $N = 40, 100$, but it
becomes $u_1=(1,0,1)$ for $N=500, 4000$ and the ODE system. \label{ol-birth-death-fig}}
\end{figure}
\begin{figure}[h]
\centering
  \begin{tabular}{cc}
    \subfigure[Control set
    $\mathcal{A}_1$]{\includegraphics[width=6.0cm]{./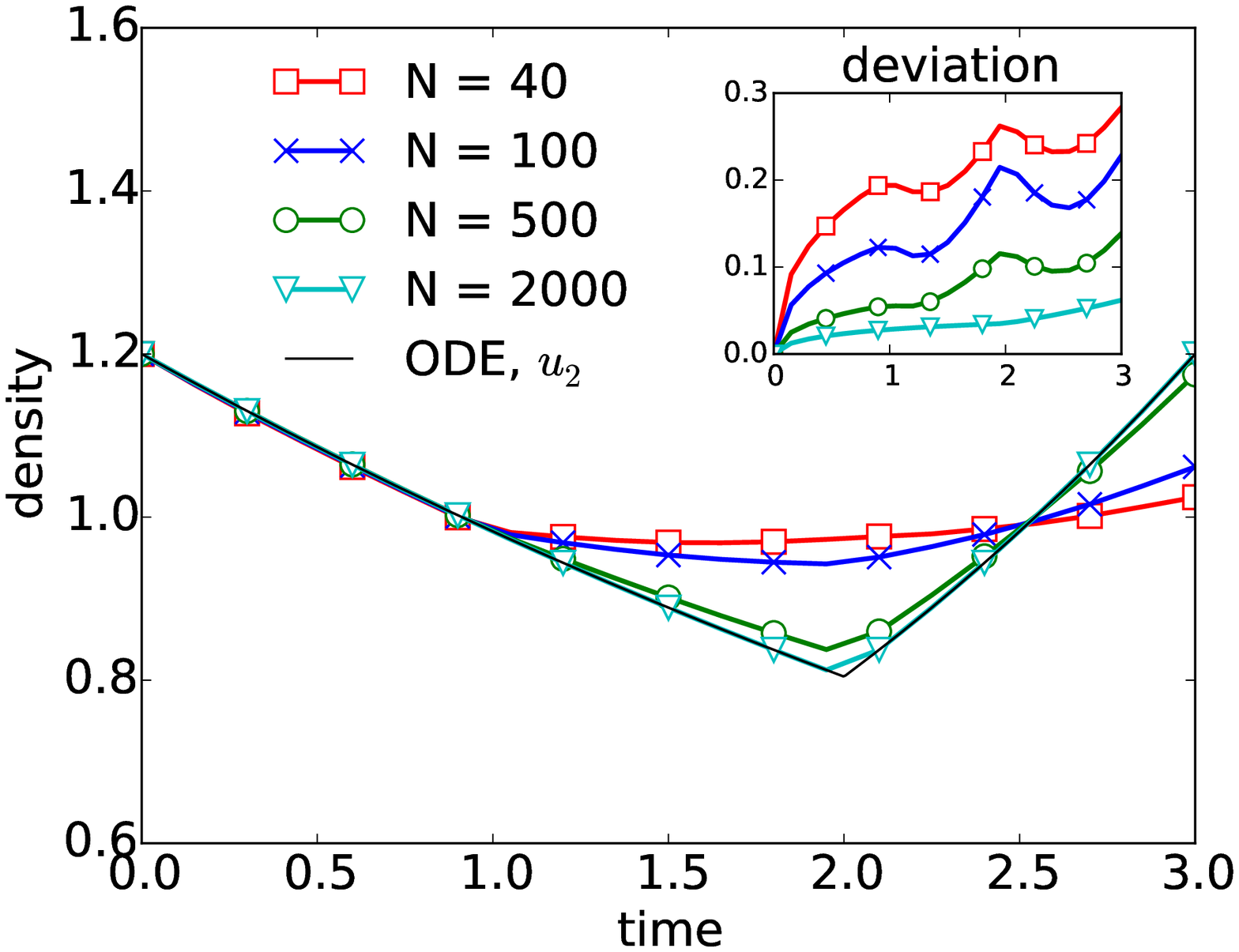}} &
    \subfigure[Control set
    $\mathcal{A}_2$]{\includegraphics[width=6.0cm]{./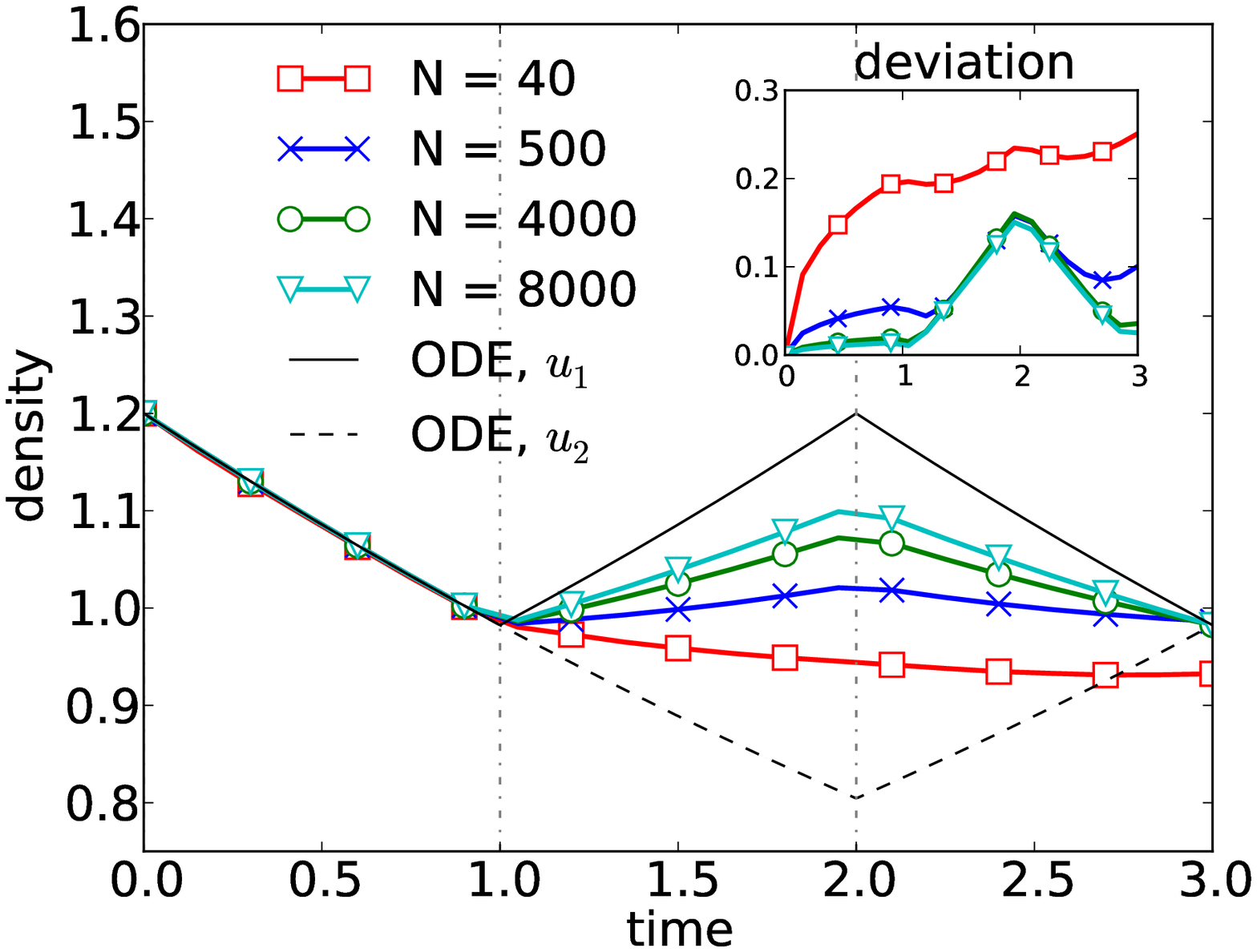}}
\end{tabular}
\caption{Birth-death process. Evolution of the empirical means and the
  standard deviations (inset plot) of
  the normalized density process $z^{u,N}$ under the optimal feedback
  control policies in comparison with the ODE solutions. $N$ is the
scaling number and controls are switched at times $t = 0.0, 1.0, 2.0$. (a)
Control set $\mathcal{A}_1$: as $N$ increases, the standard deviations
decrease and the empirical means get closer to the ODE solution under the optimal
policy $u_2 = (1,1,0)$. (b) Control set $\mathcal{A}_2$: the policies
$u_1=(1,0,1)$ and $u_2=(1,1,0)$ are the dominant (sub)optimal control policies for the ODE system.
\label{fb-birth-death-fig}}
\end{figure}
\begin{figure}[h]
\centering
  \begin{tabular}{cc}
    \subfigure[Control set
    $\mathcal{A}_1$]{\includegraphics[width=6.0cm]{./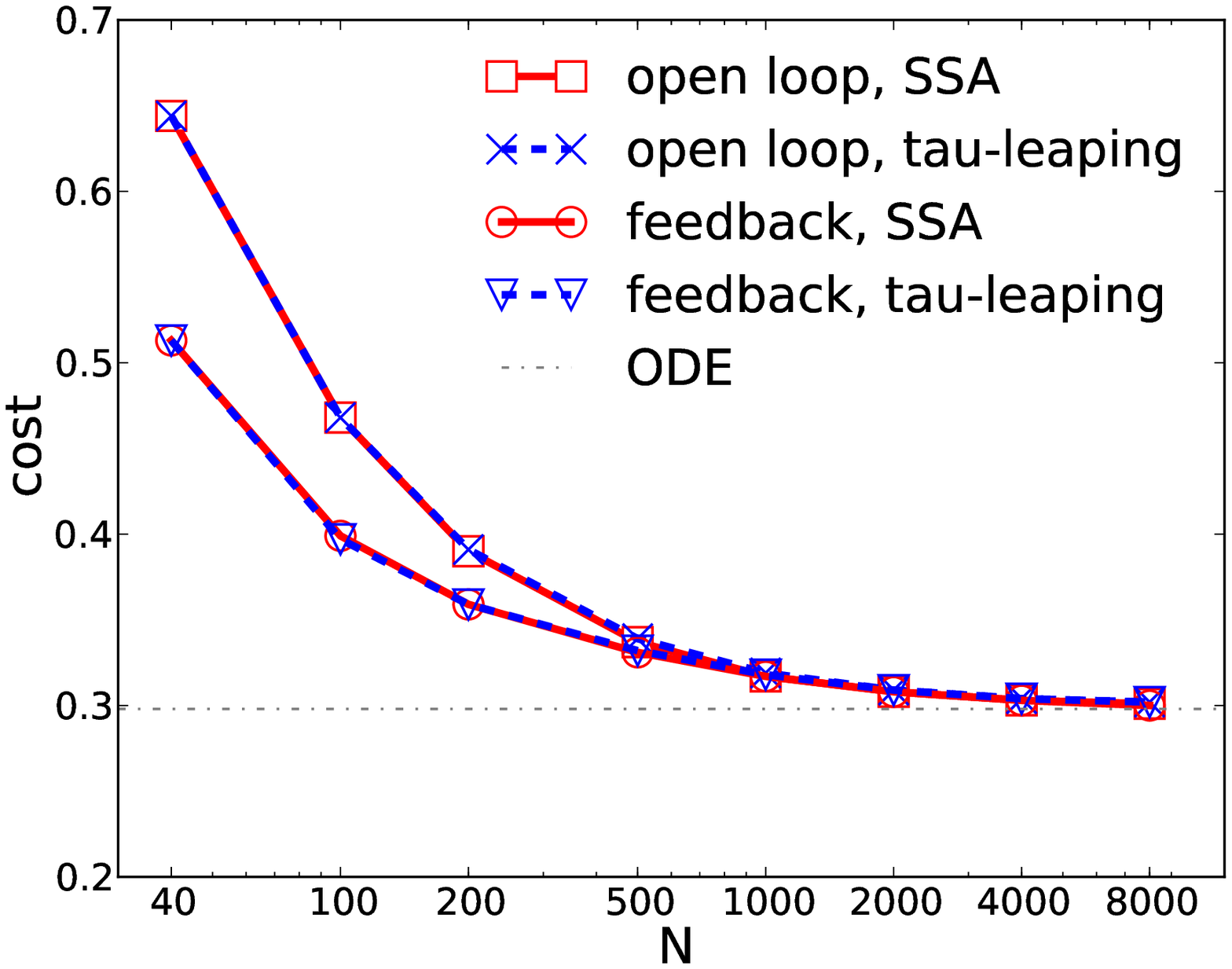}} &
    \subfigure[Control set
    $\mathcal{A}_2$]{\includegraphics[width=6.0cm]{./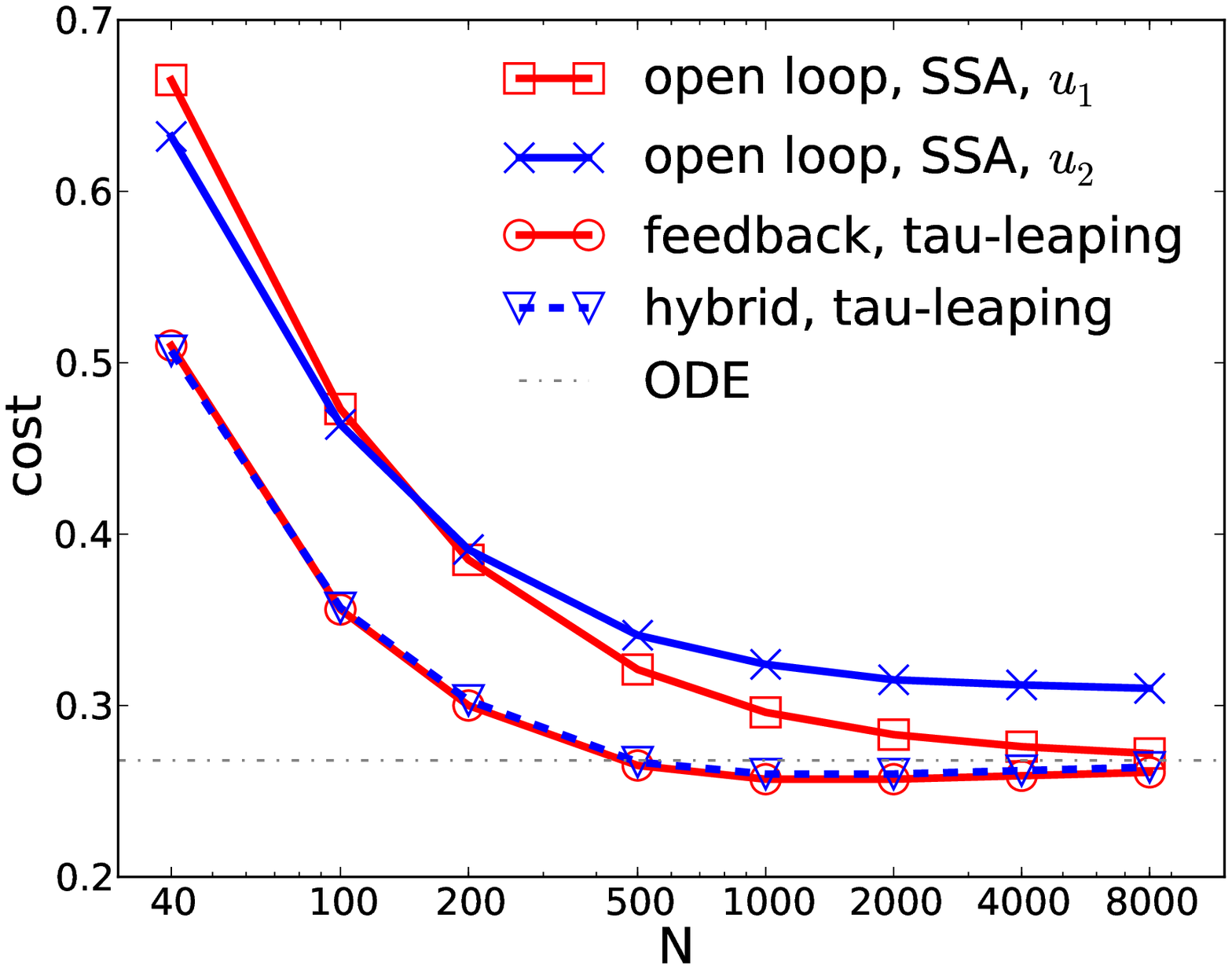}}
\end{tabular}
\caption{Birth-death process. Cost values for the jump processes with different scaling number $N$.
  Both SSA and tau-leaping methods are used to
  sample trajectories.  For the control set $\mathcal{A}_2$, $u_1 =
  (1,0,1)$, $u_2 = (1,1,0)$ are the best two open loop policies.  \label{cost-birth-death-fig}
}
\end{figure}
\begin{figure}[h]
\centering
  \begin{tabular}{ccc}
    \subfigure[Probability density at $t = 2.0$]{\includegraphics[width=0.301\textwidth]{./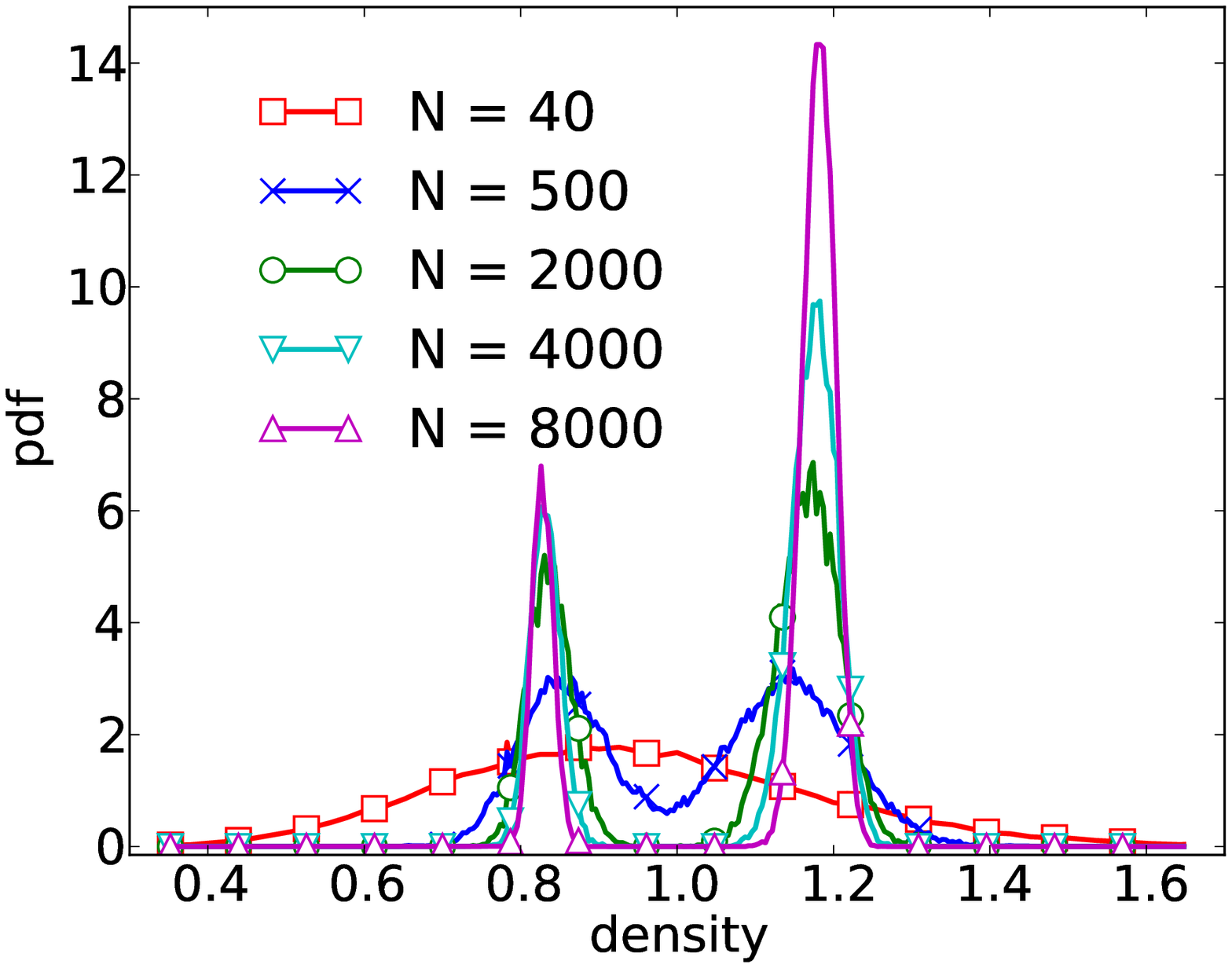}} &
    \subfigure[Run time]{\includegraphics[width=0.301\textwidth]{./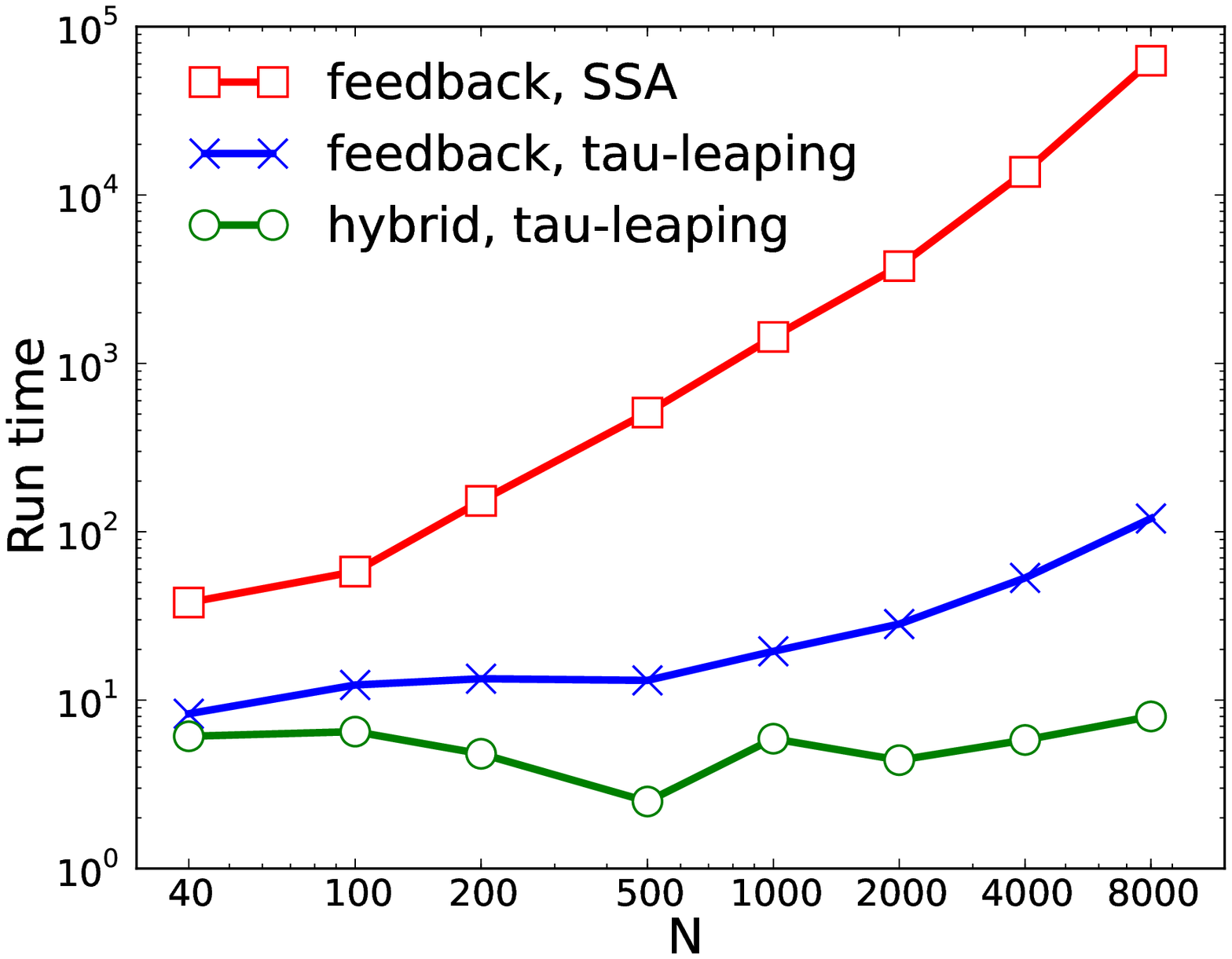}} &
    \subfigure[Number of states]{\includegraphics[width=0.301\textwidth]{./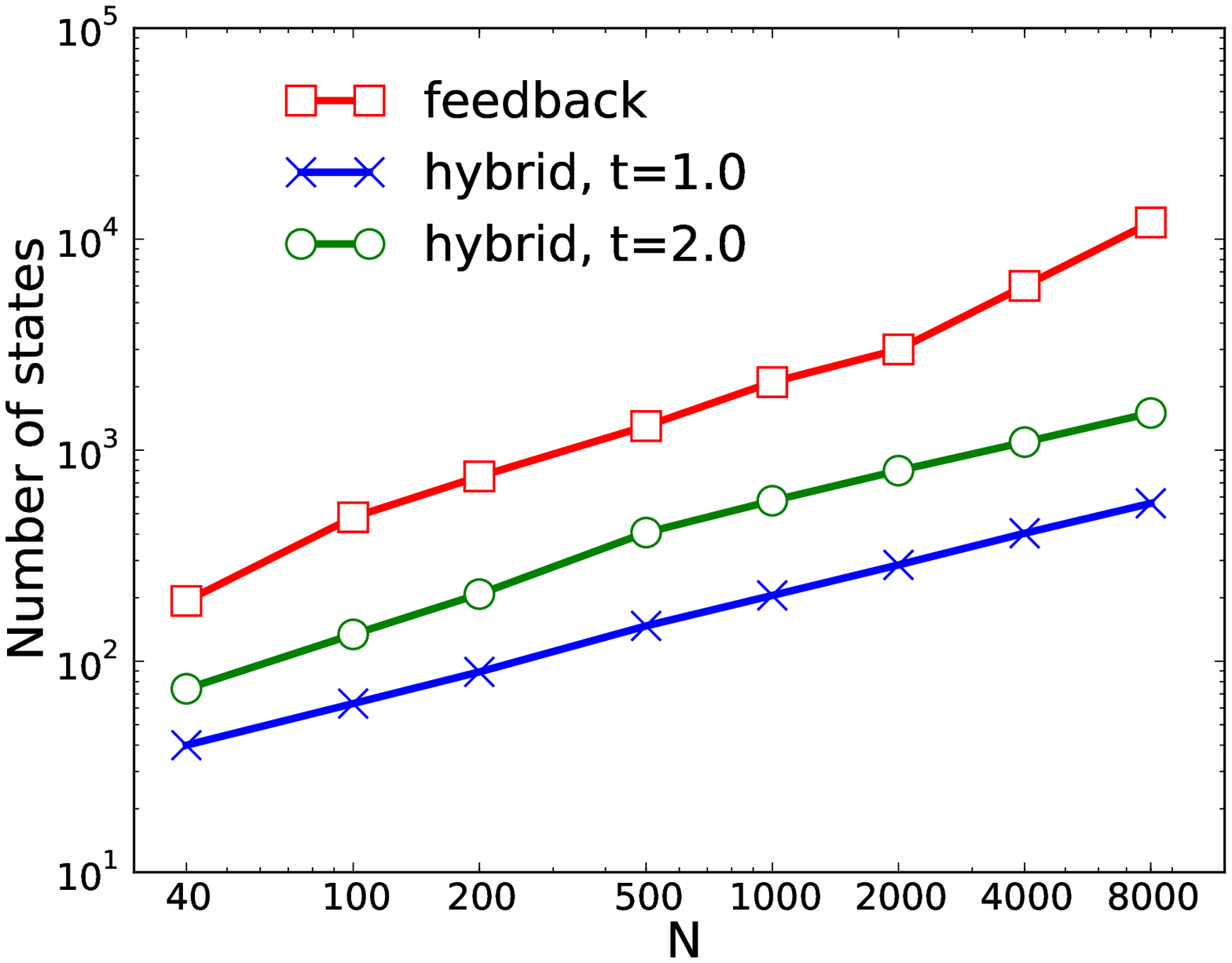}}
\end{tabular}
\caption{
Birth-death process. Dynamics under the control set $\mathcal{A}_2$.
(a) The probability distribution of states of the jump process at time $t=2.0$ under the optimal feedback control.
(b) The CPU run time (in seconds) for different values of $N$, where the
algorithm is run in parallel using $10$ processors in each case.
(c) Number of states in the sets $\mathbb{X}_{cut}$ (feedback control) and
$\mathcal{S}_1$, $\mathcal{S}_2$ (hybrid control).
\label{set-2-bd-fig}
}
\end{figure}

\subsubsection*{Hybrid control}
Finally, we consider the hybrid control policy following the procedure
discussed in Subsection~\ref{sub-hybrid} and we confine our attention to the
control set $\mathcal{A}_2$. To assess the approximation quality of the hybrid control algorithm, we compute the cost
under the open loop control policies for various values of $N$ and with $5000$
trajectories for each possible policy.
As ``good'' control policies, we define the suboptimal controls with $n_{ol} = 2$
and $\epsilon_{ol} = 0.05$ (see page \pageref{goodpolicies}). Sets
$\mathcal{S}_j$ are computed from $M_{ol} = 5000$ realizations for each
``good'' open loop policy according to (\ref{closeness}) with $\zeta=2.5$. As
Figure~\ref{set-2-bd-fig}(c) illustrated, the cardinality of the sets $\mathcal{S}_1$ and
$\mathcal{S}_2$ is much smaller than the cardinality of $\mathbb{X}_{cut}$ used in the
feedback control case, which can lead to a tremendous reduction of the computational effort as
compared to Algorithm \ref{algo-feedback} at almost no loss of numerical accuracy (see Figure~\ref{cost-birth-death-fig}).

\subsection{Predator-prey model}
In this subsection, we consider a two dimensional predator-prey model on the state
space $\mathbb{X}=\mathbb{N}^+\times \mathbb{N}^+$. We call $A$ and $B$ the prey and
predator species, and let $x = (x^{(1)}, x^{(2)})
\in \mathbb{X}$ denote the numbers of species $A$ and $B$. We suppose that
both the prey and predator reproduce
or decease naturally, with the predator eating the prey in order to reproduce.
Recalling the notations explained in Subsection~\ref{sub-concrete}, the dynamics of
$A$, $B$ species can be modelled as a jump process on $\mathbb{X}$ according to the rules (see
\cite{ode_kurtz_limit})
\begin{enumerate}
  \item
    \ce{A ->[\lambda_1] 2 A} \,, \hspace{0.2cm}
    \ce{A ->[\mu_1] $\emptyset$}
  \item
    \ce{B ->[\lambda_2] 2 B} \,, \hspace{0.2cm}
    \ce{B ->[\mu_2] $\emptyset$}
  \item
    \ce{A + B ->[b] B} \,, \hspace{0.2cm}
    \ce{A + B ->[c] A + 2B} \,.
 \end{enumerate}
A control corresponds to a vector $\nu = (\lambda_1, \mu_1, \lambda_2, \mu_2,
b, c)$, where each parameter takes positive real values. Now we define the jump vectors $l_1 = (1,0)$, $l_2 = (0,1)$ and consider the normalized
state vector $z = (z^{(1)}, z^{(2)}) = x/N \in \mathbb{R}^2$ for a fixed scaling
parameter $N \gg 1$.  The jump rates for the normalized density process are then given by
\textcolor{revisecolor}{
\begin{align}
  \begin{split}
    &f^{\nu,N}_d(z,\frac{l_1}{N}) = \lambda_1 N z^{(1)}\,, \hspace{0.5cm} f^{\nu,
  N}_d(z, -\frac{l_1}{N}) = N (\mu_1 + bz^{(2)})z^{(1)}\,,\\
  &f^{\nu, N}_d(z, \frac{l_2}{N}) = N (\lambda_2 + cz^{(1)}) z^{(2)}\,, \hspace{0.5cm}
  f^{\nu, N}_d(z, -\frac{l_2}{N}) = \mu_2 N z^{(2)}\,,
\end{split}
\end{align}
which indicate that the process is density dependent 
(see Subsection~\ref{sub-concrete}), with the vector fields $F^{\nu, N}(z)$ in
(\ref{F-nu-N-general}) given by 
\begin{align}
  F^\nu(z) = F^{\nu,N}(z) = \Big((\lambda_1 -\mu_1) z^{(1)} - bz^{(1)}z^{(2)}\,,
  \,cz^{(1)}z^{(2)} - (\mu_2 - \lambda_2) z^{(2)}\Big).
\end{align}
}

Our aim is to study the optimal control problem on a finite time-horizon $[0,T]$, with terminal time $T = 5.0$ and $K = 5$ control stages at times $t = j \times 1.0$, $0 \le j \le 4$. We define the cost functional as
\begin{align}
  J_N(z_0, u) = \mathbf{E}_{z_0}^u\!\left[\int_0^{5.0} \Big(|z_t^{(1), u, N} - 2
  z_t^{(2), u, N}| + |z_t^{(1), u, N} - 1.5|\Big)\, dt\right], \quad u \in
  \mathcal{U}_{\sigma, 0}\,,
\end{align}
where $z^{u,N}(t) = (z^{(1),u,N}_t, z^{(2),u,N}_t) = N^{-1} x^{u,N}(t)$ is the
normalized density jump process with initial condition $z^{u,N}(0) = z_0$. In our numerical
experiment, we set $z_0 = (1.0, 0.4)$ and choose $N = 50$,
$100$, $200$, $500$, $1000$, $2000$, $4000$.

The particular choice of the cost functional $J_N$ is
aimed at maintaining the density of the prey species around $z^{(1)}=1.5$ over
time $[0, 5.0]$, with roughly about two times more prey than predator.
The control set $\mathcal{A}$
contains three different controls and is shown in Table~\ref{tab-control-set-ex2}:
Observe that, in comparison with $\nu^{(0)}$, the prey reproduces faster under
the control $\nu^{(1)}$ and the predators decease more slowly, while
the control $\nu^{(2)}$ has the reverse effect.

\subsubsection*{Open loop control}
We do a brute-force calculation of the optimal open loop control policy based
on ranking all
possible $3^5=243$ policies in $\mathcal{U}_{o, 0}$ according to their cost. In each case, $50000$ trajectories are sampled using both SSA
and tau-leaping methods. From Table~\ref{tab-step-size-ex2}, we conclude that for large $N$ ($\ge 500$),
tau-leaping method outperforms the SSA, as is indicated by the large increment of the effective time step sizes.
Except for the system with $N=50$ whose optimal open loop control policy is $u_1 = (0, 2, 1, 0,
2)$ with the corresponding cost $11.26$, the optimal policies for other larger $N$ are all $u_2 =
(0,2,1,2,2)$, which is also the optimal policy for the limiting ODE system
(for $N=50$, $u_2$ is the second best policy with cost $11.30$), see
Figure~\ref{fig-cost-ex2}.
The empirical means and the standard deviations of the normalized density process $z^{u,N}$ 
are shown in Figure~\ref{fig-mean-var-ol-ex2} for various values of $N$. As can be expected from the theoretical predictions, we observe that the mean values approach the solution of the limiting ODE, with the standard deviations
decreasing as $N$ increases. Convergence of the cost values to the cost
value of the limit ODE system is also observed in
Figure~\ref{fig-cost-ex2}.
\begin{table}
\centering
\begin{tabular}{c|c|cccccc}
  \hline
  \hline
  No. & control & $\lambda_1$ & $\mu_1$ & $\lambda_2$ & $\mu_2$ & $b$ & $c$ \\
  \hline
  $0$ & $\nu^{(0)}$ & $2.5$ & $0.2$ & $0.2$ & $2.0$ & $2.0$ & $2.0$ \\
  $1$ & $\nu^{(1)}$ & \underline{$2.7$} & $0.2$ & $0.2$ & \underline{$1.5$} & $2.0$ & $2.0$ \\
  $2$ & $\nu^{(2)}$ & $2.5$ & $0.2$ & $0.2$ & \underline{$2.5$} & $2.0$ & $2.0$ \\
  \hline
\end{tabular}
\caption{Predator-prey model. The control set $\mathcal{A}$ contains three different controls to modify the rates in the
predator-prey model. The major differences among the controls are indicated by
the underlined rates.  \label{tab-control-set-ex2}}
\end{table}
\begin{table}
\centering
\setlength\tabcolsep{4.0pt}
\small
\begin{tabular}{c|ccccccc}
  \hline
  \hline
  $N$ & $50$ & $100$ & $200$ & $500$ & $1000$ & $2000$ & $4000$ \\
  \hline
  $\Delta_{s}t$ & $1.8 \times 10^{-3}$ &$9.0 \times 10^{-4}$ & $4.5 \times
  10^{-4}$ &$1.8 \times 10^{-4}$ & $9.0 \times 10^{-5}$ & $4.5 \times 10^{-5}$
  & $2.2 \times 10^{-5}$ \\
  $\Delta_{\tau}t$ & $1.8 \times 10^{-3}$ & $9.0 \times 10^{-4}$ & $4.5 \times
  10^{-4}$ & $3.9 \times 10^{-4}$ & $1.1 \times 10^{-3}$ & $2.7 \times 10^{-3}$ & $3.2 \times 10^{-3}$\\
  \hline
  \end{tabular}
  \caption{Predator-prey model. Average time step sizes when the SSA (row with
    label $\Delta_{s}t$) or tau-leaping method (row with label $\Delta_\tau t$) are used to generate realizations of the predator-prey
  model.\label{tab-step-size-ex2}}
\end{table}
\begin{table}
\centering
\begin{tabular}{c|rrrrrrr}
  \hline
  \hline
  $N$ & $50$ & $100$ & $200$ & $500$ & $1000$ & $2000$ & $4000$ \\
  \hline
  $N_g$ & $5$ & $5$ &$3$ &$3$ &$3$ & $3$ & $3$ \\
  $M_{ol}$ & $5000$ & $10000$ & $10000$ & $10000$ & $20000$ & $20000$ & $30000$ \\
 $\min\limits_{1 \le j \le 4} |\mathcal{S}_j|$ &  $4090$ & $8738$ & $12024$ & $11545$ & $23120$ &  $26060$ & $40463$ \\
 $\max\limits_{1 \le j \le 4} |\mathcal{S}_j|$ & $11420$ & $30572$ & $25784$ & $14587$ & $29369$ & $29597$ & $44513$ \\
  $9N^2$ & $22500$ & $90000$ & $360000$ & $2250000$ & $9000000$ & $36000000$ & $144000000$\\
  \hline
  \end{tabular}
  \caption{Predator-prey model with hybrid control. The row ``\,$9N^2$'' shows
    the estimated state space cardinalities
  after truncation if a simple cut-off criterion is used. The row ``$N_g$'' shows the
  number of the ``good'' open control policies, and ``$M_{ol}$'' denotes the number of
  trajectories generated for each ``good'' open policy in the calculation of the sets
  $\mathcal{S}_j$. The other two rows contain the minimum and maximum numbers of states in the sets $\mathcal{S}_j$.
 \label{tab-hybrid-ex2-1}}
\end{table}

\begin{table}
\centering
\begin{tabular}{c|c|rrrrrrr}
  \hline
  \hline
$\epsilon_{near}$ & $N$ &  $50$ & $100$ & $200$ & $500$ & $1000$ & $2000$ & $4000$ \\
  \hline
\multirow{3}{*}{$0.0$}
& $r_{ol}$  & $13.6\%$ & $13.6\%$ &$38.1\%$ &$66.1\%$ &$66.6\%$ &$73.2\%$ & $74.7\textcolor{revisecolor}{\%}$ \\
& time & $1.0$h & $5.3$h & $5.6$h & $7.1$h & $5.0$h & $5.0$h & $8.2$h \\
& cost    & $10.72$ & $9.88$& $9.58$& $9.27$ & $9.18$ &$9.13$ & $9.11$ \\
  \hline
\multirow{4}{*}{$0.02$}
& $r_{ol}$  & $3.3\%$ & $1.1\%$ & $0.9\%$ & $0.6\%$ &$0.3\%$ & $0.4\%$ & $0.3\%$ \\
  & $r_{near}$ & $10.2\%$ & $12.0\%$ & $36.4\%$ & $65.5\%$ & $66.3\%$ & $72.9\%$ & $74.3\%$\\
& time & $1.1$h & $5.5$h & $5.5$h & $7.0$h & $5.7$h & $5.5$h & $7.2$h \\
& cost & $10.60$  &  $9.81$  & $9.47$ & $9.25$ & $9.18$ & $9.13$ & $9.11$\\
  \hline
  \end{tabular}
  \caption{Predator-prey model with hybrid control. The rows ``\,$r_{ol}$'' and
``$r_{near}$'' record the relative frequencies of using an open loop policy or a
feedback policy of a nearest neighbor when the hybrid control policy
is applied (see Subsection~\ref{sub-hybrid}). The row ``time'' shows the
CPU run time (in hours) needed to compute the optimal hybrid control policy
with $20$ processors running in parallel for each $N$. \label{tab-hybrid-ex2-2}}
\end{table}
\begin{figure}[h!]
\centering
  \begin{tabular}{cc}
    \subfigure[]{\includegraphics[width=6.0cm]{./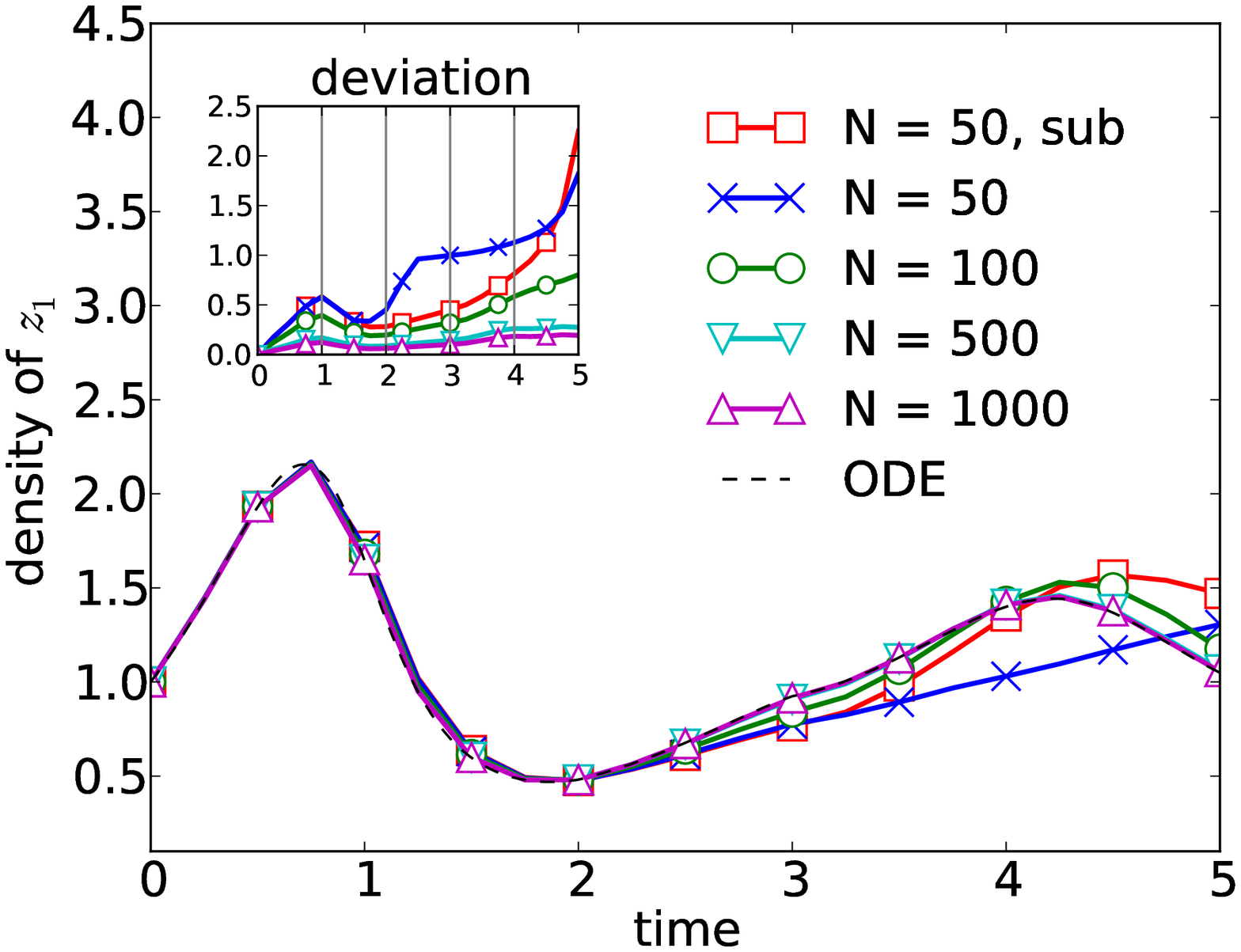}} &
    \subfigure[]{\includegraphics[width=6.0cm]{./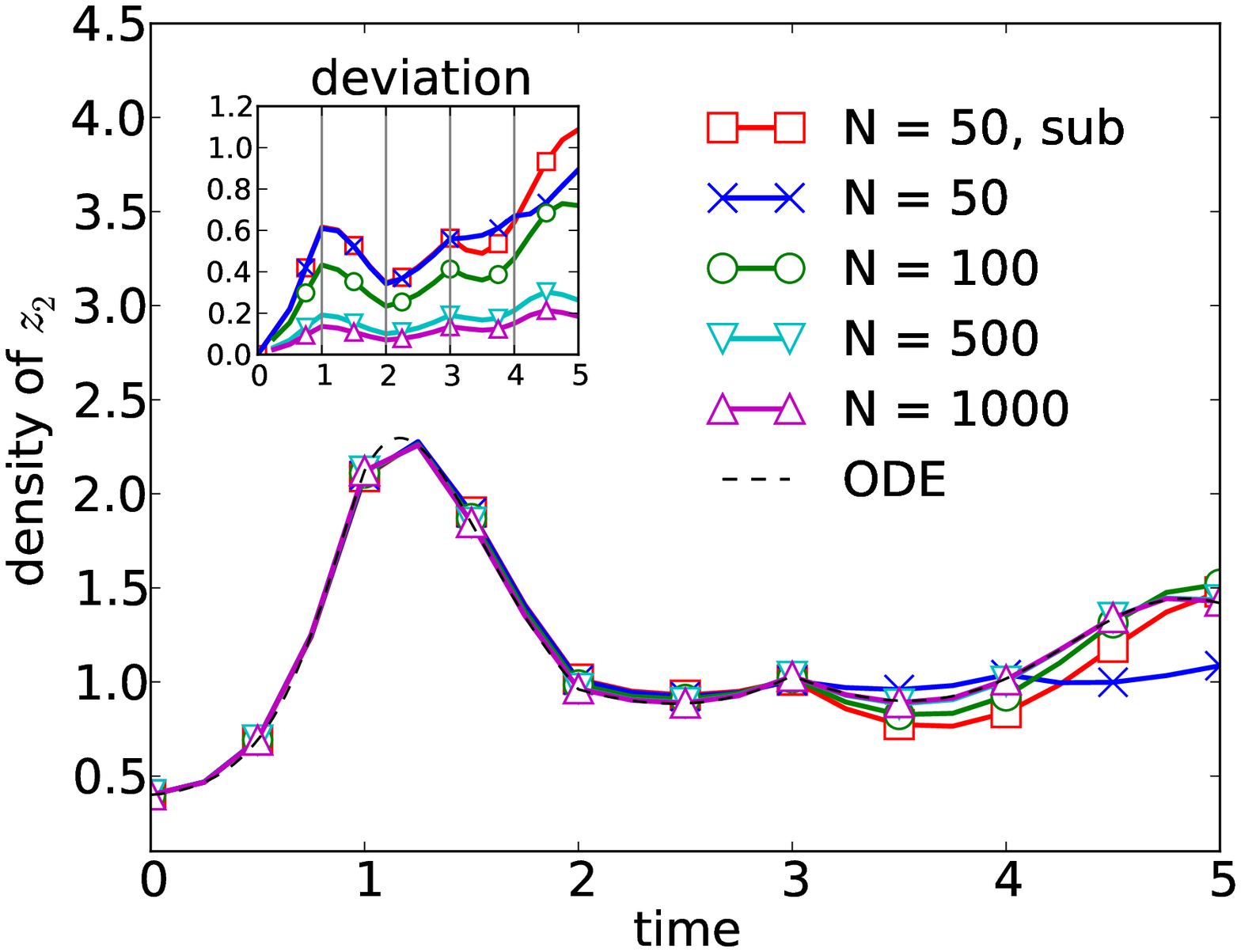}}
\end{tabular}
\caption{Predator-prey model. Evolution of the empirical means and the standard
  deviations (inset plot) of the normalized
  predator and prey states (densities) under the optimal open loop policy. The
  curve labeled by ``$N=50$, sub'' corresponds to the jump process of size $N=50$
that is controlled by the suboptimal policy $u_2$, which becomes the optimal policy for larger $N$.
``ODE'' corresponds to the limiting ODE under the optimal policy
$u_2$.\label{fig-mean-var-ol-ex2}}
\end{figure}
\begin{figure}[h!]
\centering
  \begin{tabular}{cc}
    \subfigure[]{\includegraphics[width=6.0cm]{./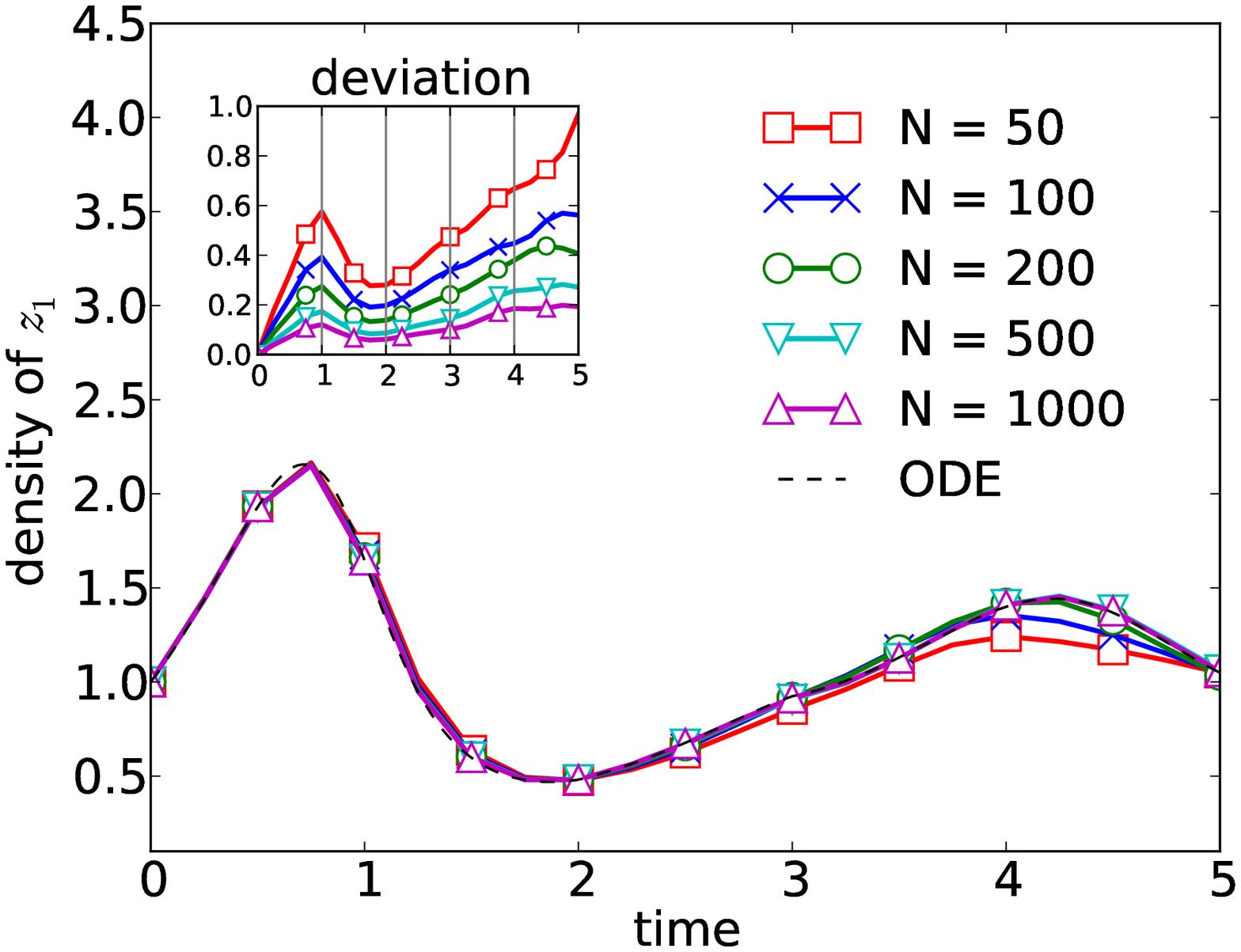}} &
    \subfigure[]{\includegraphics[width=6.0cm]{./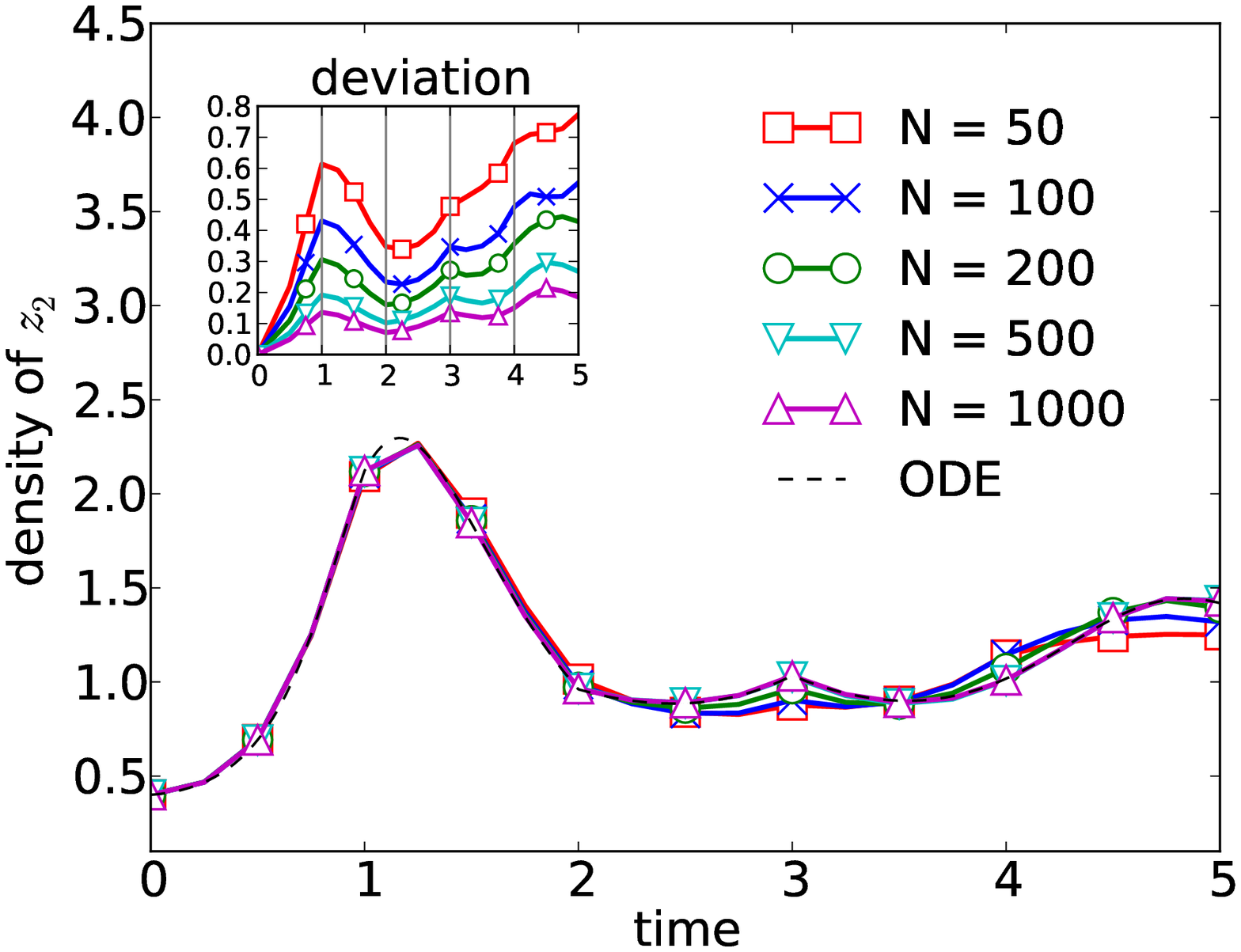}}
\end{tabular}
\caption{Predator-prey model. Evolution of empirical means and standard
  deviations (inset plot) of the normalized
  predator-prey system under the hybrid control policy.
\label{fig-mean-var-hybrid-ex2}}
\end{figure}

\begin{figure}[h!]
\centering
    \includegraphics[width=8.0cm]{./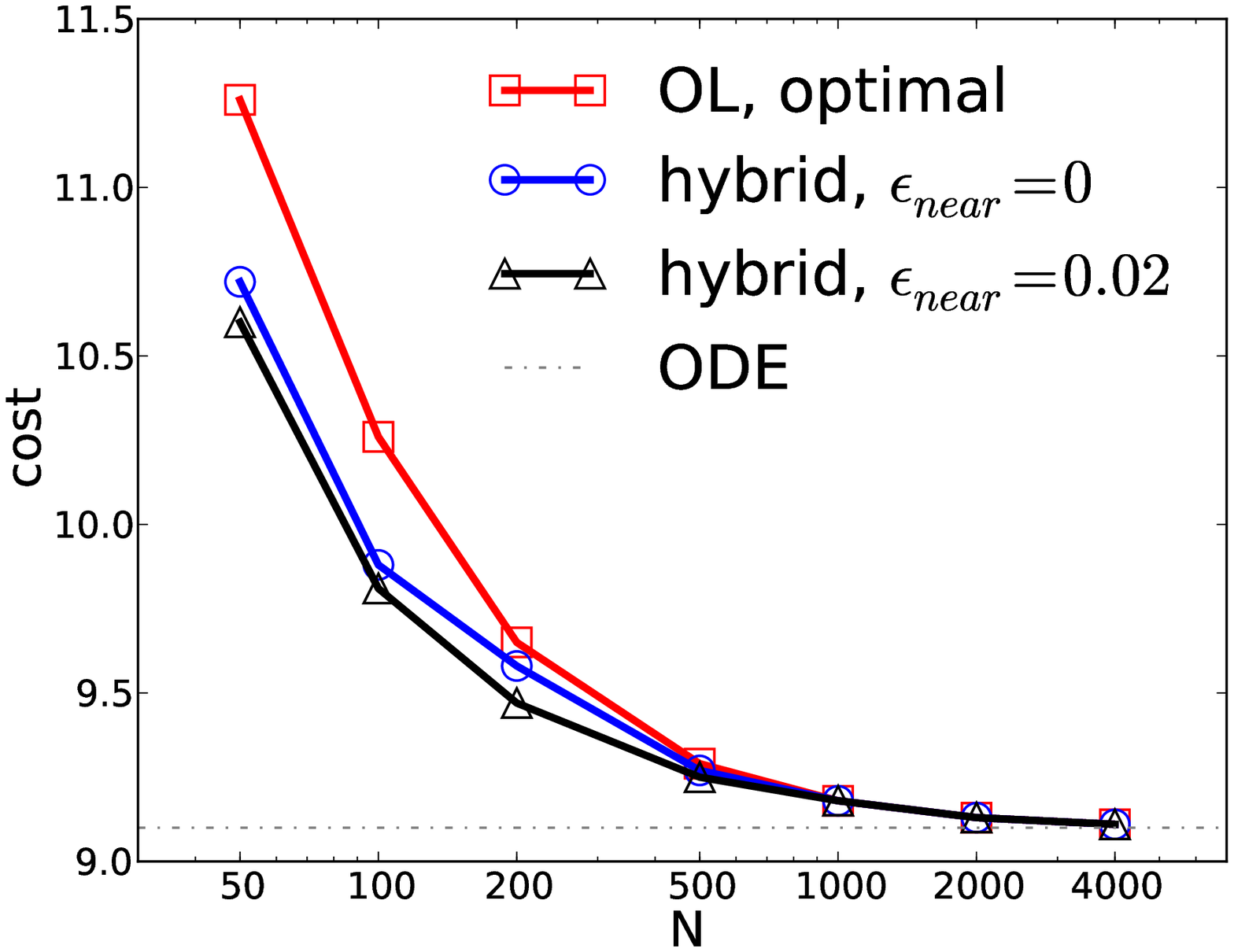}
    \caption{
      Predator-prey model.
      Cost values of the predator-prey model under the optimal open loop
      (``OL'') control policy, the hybrid control
    policies with $\epsilon_{near} = 0$ and $0.02$, for various values of $N$. The dotted horizontal line is the optimal cost for
  the limiting ODE system. \label{fig-cost-ex2}}
\end{figure}

\subsubsection*{Hybrid control}
We continue to study the hybrid control policy introduced in
Subsection~\ref{sub-hybrid}. Firstly, all $243$ possible open loop control policies are
ordered by their costs, among which we identify all ``good'' policies with $n_{ol} = 3$,
$\epsilon_{ol} = 0.05$.  Then, secondly, we estimate the empirical means and
the standard deviations of the process under all ``good''
policies based on $5000$ independent realizations of the process. Thirdly, for each ``good'' policy,
we generate $M_{ol}$ trajectories
once again and collect the accessed states at time $t_j$ in
$\mathcal{S}_j$, $1 \le j < M$ according to the criterion (\ref{closeness})
for $\zeta=3.0$. (Note that $\mathcal{S}_0$ contains only a single element). The minimum and maximum cardinalities
$\min\limits_{1 \le j \le M-1} |\mathcal{S}_j|$ and
$\max\limits_{1 \le j \le M-1} |\mathcal{S}_j|$ of sets $\mathcal{S}_j$ are shown in
Table~\ref{tab-hybrid-ex2-1}.

The reader should bear in mind that, if we wanted to compute the optimal feedback control
policy on a globally truncated state space $\mathbb{X}_{cut}$ (see
Subsection~\ref{sub-feedback}), then it would be necessary to include states whose
normalized components are within $[0, 3.0] \times [0, 3.0]$ as suggested by
the empirical means and standard
deviations of the process (see Figure~\ref{fig-mean-var-ol-ex2}), which
would result in roughly $9N^2$ states in total; even for moderate predator-prey populations, computing the optimal feedback
policy on $\mathbb{X}_{cut}$ is therefore extremely costly. Compared to this approach, the adaptive state truncation that gives
rise to the sets $\mathcal{S}_j$ is much more
efficient in the sense that the overall number of states involved in the
computation of the optimal hybrid policy is much smaller; see
Table~\ref{tab-hybrid-ex2-1} and Figure~\ref{fig-states-ex2}.

Finally, we compute the optimal hybrid policy
using Algorithm~\ref{algo-hybrid} and apply it to the predator-prey
model in the way explained in Subsection~\ref{sub-hybrid}. The resulting cost
values that were estimated based on $50000$ independent realizations are shown in
Table~\ref{tab-hybrid-ex2-2}, Figure~\ref{fig-cost-ex2} and clearly demonstrate the superiority of the hybrid controls over
the optimal open loop control policies (in particular, see Table~\ref{tab-hybrid-ex2-2} for $N=50, 100, 200$). To explain the
observed gain in the numerical speed-up,  Table~\ref{tab-hybrid-ex2-2} also records the relative frequencies $r_{ol}$ of switching to
an open loop policy: For $\epsilon_{near}=0.0$, we observe that the hybrid control frequently switches to the
optimal open loop policy, which is an indicator that the sets $\mathcal{S}_j$ are too
small as the dynamics often hits an ``unknown'' state outside  $\mathcal{S}_j$. Yet, for $\epsilon_{near} = 0.02$, we
find that $r_{ol}$ decreases significantly which suggests that the sets $\mathcal{S}_j$ contain almost all states
that are close to the accessible states under the given control policy.
Note moreover that the resulting cost value for $\epsilon_{near} = 0.02$  is slightly improved over the choice $\epsilon_{near} = 0.0$.

Before we conclude, we would like to stress an important observation that the
standard deviations of the process are smaller under the hybrid control
policy (similarly for the feedback policy) than that under the optimal open loop policy.
This effect can be revealed by comparing Figure~\ref{fig-mean-var-ol-ex2} with Figure~\ref{fig-mean-var-hybrid-ex2} for the same value of $N$, and it suggests that besides providing smaller costs, both hybrid and feedback control
policies have a positive effect on stabilizing the stochastic process.

\begin{figure}[h!]
\centering
    \includegraphics[width=0.95\textwidth]{./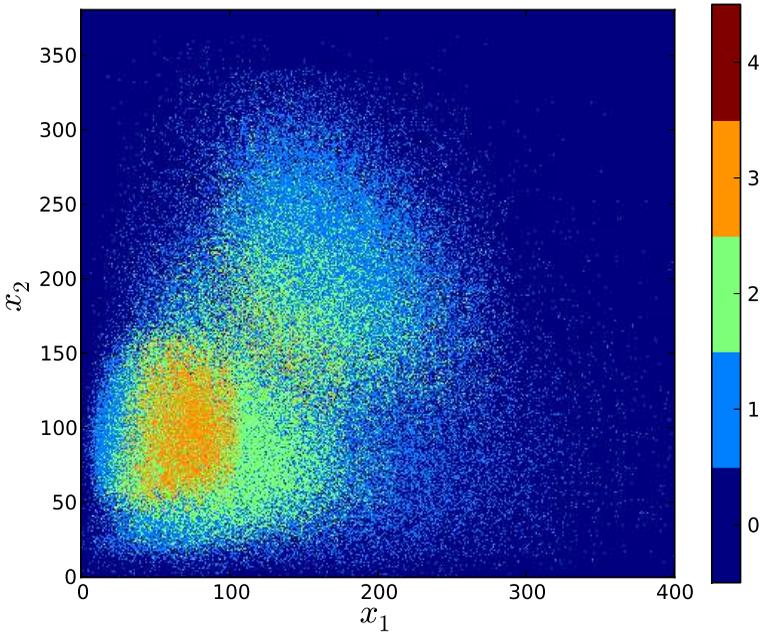}
    \caption{States selected to construct the hybrid control policy in the predator-prey model for $N=100$. The value
      at each grid point $x=(x^{(1)}, x^{(2)})$ counts how many
    sets $\mathcal{S}_j$ contain the state $x$, i.e. the value at
    $x\in\mathbb{X}$ is equal to $\sum\limits_{j=1}^4
  \mathbf{1}_{\mathcal{S}_j}(x)$. \label{fig-states-ex2}}
\end{figure}

\section{Conclusions and future directions}
Due to their wide applicability, Markov Decision Processes
have been the subject of intensive research.
While the theory is well developed, algorithms for numerically computing optimal controls are restricted to small or moderately sized systems.

The aim of this paper was to analyze optimal control problems for Markov jump
process in the large number regime (parameterized by the ``particle'' number
$N \gg 1$), i.e. when the state space is too large to compute the optimal feedback controls using
standard algorithms.
Based on Kurtz's limit theorems, we have established convergence results
for the value functions of the optimal control problems on finite and infinite time-horizons as $N
\rightarrow \infty$. Our results suggest that the optimal open loop control policy for
the limiting deterministic system is a good substitute for the controlled Markov jump
process, for which the optimal feedback policy may be difficult to compute.
Nonetheless, for a given jump process with a possibly large, but finite $N$, the approximation error induced by replacing the
optimal stochastic (feedback) control with the limiting deterministic control is difficult to assess; even for large values of $N$ the stochastic dynamics
controlled by a deterministic open loop control policy is not robust under the intrinsic random perturbations, and may hence
deviate considerably from the optimal regime. To account for this lack of robustness, we proposed an algorithmic strategy to compute a
\textit{hybrid} control policy that is based on a combination of deterministic (open loop) and stochastic (closed loop) controls.
The key idea is to truncate the state space adaptively in time, exploiting data gathered from stochastic simulations under near-optimal open loop
policies, and then to apply the optimal feedback control policy for all times in which the stochastic realizations resides inside the truncated state space (for all other states, the optimal open loop policy is applied). Both the accuracy and the practicability of the proposed \textit{hybrid} algorithm have been demonstrated numerically with birth-death and predator-prey models.

Before we conclude, it is necessary to mention several related topics which go beyond our current work. 
Firstly, throughout the article, we have assumed that the cost can be expressed as a
function of the normalized density process $z^N(t) = N^{-1}x^N(t)$, which in many cases is the natural variable scaling.
\textcolor{revisecolor}{In some cases, however, such as complex chemical reaction networks, it might be necessary to consider a more general scaling
  of the form $z^{(i),N}(t) = N^{-\alpha_i} x^{(i),N}(t)$, $\alpha_i \ge 0$, in
which each chemical species comes with its own scaling order.}
Then, in the limit $N\to\infty$ it may happen that the limit of $z^{(i),N}(t)$ can be deterministic, stochastic or even hybrid when
some of the $\alpha_i$ are equal to zero and others are positive. We emphasize
that in these cases determining the correct scaling of the variables is not a trivial task and 
the convergence analysis is also more involved (see \cite{ball06, kang2013}).
\textcolor{revisecolor}{Secondly, besides the large copy-number $N$, systems in realistic applications may also contain many different species. 
While our analysis is still valid in this case, it may become computationally challenging to compute the hybrid policy proposed in the current work.}
One idea to alleviate the difficulty is to first reduce the dimension of
the system (especially when there are both slow and fast reactions or when the quasi-stationary assumption is satisfied), and then utilize the information
of the reduced system to design numerical algorithms. 
Thirdly, it is also interesting to consider the asymptotic analysis of the
optimal control problem in the case that the control policy \textcolor{revisecolor}{can be switched at
any time or when there is uncertainty in the observation of the system's states}. We leave these aspects for future work.

\section*{Acknowledgement}
The authors acknowledge financial support by the Einstein Center of Mathematics (ECMath) and the DFG-CRC 1114 ``Scaling Cascades in Complex Systems''.

\appendix
\section{A technical lemma}
\label{app-1}
The following inequality has been used in the proof of Theorem~\ref{thm-2}.
\begin{lemma}
  Let $\varphi(z) = |z|^\alpha$, where $z \in \mathbb{R}^n$ and $1 < \alpha
  \le 2$. We have 
  \begin{align}
    & 0 \le \varphi(z + w) - \varphi(w) - z \cdot \nabla \varphi(w)
    \le \frac{4}{\alpha - 1} \varphi\left(\frac{z}{2}\right)\,, \quad \forall z, w \in
    \mathbb{R}^n\,.
    \end{align}
    \label{lemma-1}
\end{lemma}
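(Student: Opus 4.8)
The plan is to prove the two inequalities separately. The left-hand inequality is just the convexity of $\varphi$: since $\varphi(z)=|z|^\alpha$ is convex on $\mathbb{R}^n$ for $\alpha\ge 1$ and differentiable everywhere (because $\alpha>1$), the quantity $\varphi(z+w)-\varphi(w)-z\cdot\nabla\varphi(w)$ is the Bregman divergence of $\varphi$ and hence nonnegative; at $w=0$ one uses $\nabla\varphi(0)=0$. So the real content is the upper bound, and I would dispose of the trivial case $z=0$ first and assume $z\neq 0$ from now on. The idea is to reduce the $n$-dimensional estimate to a one-dimensional one by restricting $\varphi$ to the segment $t\mapsto w+tz$.

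Setting $g(t)=\varphi(w+tz)=|w+tz|^\alpha$, Taylor's formula with integral remainder gives $\varphi(z+w)-\varphi(w)-z\cdot\nabla\varphi(w)=g(1)-g(0)-g'(0)=\int_0^1(1-t)\,g''(t)\,dt$. A direct computation yields $g''(t)=\alpha(\alpha-2)|w+tz|^{\alpha-4}\big((w+tz)\cdot z\big)^2+\alpha|w+tz|^{\alpha-2}|z|^2$, and since $\alpha\le 2$ the first term is nonpositive (equivalently, the largest eigenvalue of $\nabla^2\varphi(u)$ is the tangential one, $\alpha|u|^{\alpha-2}$), so $0\le g''(t)\le \alpha|w+tz|^{\alpha-2}|z|^2$. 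I would then make the dependence on $w$ explicit by completing the square: writing $|w+tz|^2=|z|^2(t-t_0)^2+d^2$ with $t_0=-(w\cdot z)/|z|^2$ and $d^2=|w|^2-(w\cdot z)^2/|z|^2\ge 0$, and discarding $d^2$ (the exponent $\alpha-2$ is negative) gives $|w+tz|^{\alpha-2}\le |z|^{\alpha-2}|t-t_0|^{\alpha-2}$. Hence the whole problem collapses to the scalar bound
\[
  \varphi(z+w)-\varphi(w)-z\cdot\nabla\varphi(w)\le \alpha|z|^\alpha\int_0^1(1-t)\,|t-t_0|^{\alpha-2}\,dt ,
\]
and it remains to show that $I(t_0):=\int_0^1(1-t)|t-t_0|^{\alpha-2}\,dt\le \frac{2^{2-\alpha}}{\alpha(\alpha-1)}$ uniformly in $t_0\in\mathbb{R}$, because $\alpha|z|^\alpha\cdot\frac{2^{2-\alpha}}{\alpha(\alpha-1)}=\frac{2^{2-\alpha}}{\alpha-1}|z|^\alpha=\frac{4}{\alpha-1}\varphi(z/2)$, which is exactly the claimed constant.

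The main obstacle is this last scalar estimate. I would compute $I(t_0)$ in closed form; for $t_0\in[0,1]$ one obtains $I(t_0)=(1-t_0)\frac{t_0^{\alpha-1}}{\alpha-1}+\frac{t_0^\alpha}{\alpha}+\frac{(1-t_0)^\alpha}{\alpha(\alpha-1)}$, while the cases $t_0<0$ and $t_0>1$ (no interior singularity) are handled similarly and give smaller values. One then maximizes over $t_0$. The estimate is sharp: at $\alpha=2$ a short calculation gives $I(t_0)\equiv\tfrac12$, so the bound holds with \emph{equality} there and the argument has no slack; consequently the monotonicity analysis of $I$ for $1<\alpha<2$ must be carried out carefully (its maximum lies in $(0,\tfrac12)$, strictly below the target, with margin that shrinks to zero as $\alpha\uparrow 2$). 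A secondary technical point is that when the segment $w+tz$ passes through the origin (i.e.\ $d=0$ and $t_0\in(0,1)$), $g''$ has an integrable singularity; I would justify the integral Taylor formula by noting that $g\in C^1([0,1])$ with $g'$ absolutely continuous, or by perturbing the line slightly off the origin and passing to the limit.

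As an alternative route I would mention the gradient formulation, which avoids the delicate scalar optimization at the cost of an auxiliary vector inequality. Writing $\varphi(z+w)-\varphi(w)-z\cdot\nabla\varphi(w)=\int_0^1\big(\nabla\varphi(w+tz)-\nabla\varphi(w)\big)\cdot z\,dt$ and invoking the Hölder continuity $\big||a|^{\alpha-2}a-|b|^{\alpha-2}b\big|\le 2^{2-\alpha}|a-b|^{\alpha-1}$ of the field $x\mapsto|x|^{\alpha-2}x$ (with $a=w+tz$, $b=w$), together with $\int_0^1 t^{\alpha-1}\,dt=\tfrac1\alpha$, gives $\varphi(z+w)-\varphi(w)-z\cdot\nabla\varphi(w)\le 2^{2-\alpha}|z|^\alpha=4\,\varphi(z/2)\le \frac{4}{\alpha-1}\varphi(z/2)$, since $\alpha-1\le 1$. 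In this version the obstacle is simply shifted onto establishing that vector Hölder inequality with its sharp constant (attained at $a=-b$); either way, the crux of the proof is an exponent-$(\alpha-2)$ integrability estimate that is tight precisely at $\alpha=2$.
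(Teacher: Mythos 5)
Your reductions are all correct, and your route is genuinely different from the paper's: the lower bound via convexity is fine, the Taylor representation $\varphi(z+w)-\varphi(w)-z\cdot\nabla\varphi(w)=\int_0^1(1-t)g''(t)\,dt$ with $g(t)=|w+tz|^\alpha$ is valid (your handling of the possible singularity on the segment is adequate), the bound $0\le g''(t)\le\alpha|w+tz|^{\alpha-2}|z|^2$ and the step $|w+tz|^{\alpha-2}\le|z|^{\alpha-2}|t-t_0|^{\alpha-2}$ are correct, and so is your closed form for $I(t_0)$ on $[0,1]$. But the argument stops exactly where the lemma's difficulty lies. Everything has been reduced to the scalar claim $\sup_{t_0\in[0,1]}I(t_0)\le\frac{2^{2-\alpha}}{\alpha(\alpha-1)}$, equivalently $\Phi(t_0):=\alpha t_0^{\alpha-1}-t_0^{\alpha}+(1-t_0)^{\alpha}\le 2^{2-\alpha}$ on $[0,1]$, and this is asserted (``one then maximizes over $t_0$'', ``its maximum lies in $(0,\tfrac12)$, strictly below the target'') but never proven. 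That verification is not routine: the critical point solves the transcendental equation $(\alpha-1)t_0^{\alpha-2}=t_0^{\alpha-1}+(1-t_0)^{\alpha-1}$, so the maximum has no closed form, and since $\Phi\equiv 2^{2-\alpha}$ identically at $\alpha=2$, no soft continuity or compactness argument can work — any proof must control the $\alpha$-dependence quantitatively. (The claim does appear true, e.g.\ numerically $\max\Phi\approx 1.307$ versus $\sqrt2\approx 1.414$ at $\alpha=1.5$, and $\approx 1.0054$ versus $\approx 1.0070$ at $\alpha=1.99$, but that is not a proof.) Your alternative route has the same status: the sharp vector H\"older estimate $\bigl||a|^{\alpha-2}a-|b|^{\alpha-2}b\bigr|\le 2^{2-\alpha}|a-b|^{\alpha-1}$ is true and known, but it is at least as hard as the lemma itself; as you yourself note, the obstacle is merely shifted onto it, which concedes the gap rather than closing it.

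For contrast, the paper's proof closes this hole without any sharp optimization. It rotates coordinates so that $z=(z_1,0,\dots,0)$, writes the Bregman difference as a double integral $\int_0^{z_1}\!\int_0^r(\cdots)\,ds\,dr$ of second-derivative terms of $g(r)=r^\alpha$, uses that $g''$ and $g'(r)/r=g''(r)/(\alpha-1)$ are non-increasing together with the elementary inequality $\frac{a+b/(\alpha-1)}{a+b}\le\frac{1}{\alpha-1}$ to dominate the integrand by $\frac{1}{\alpha-1}g''(|s+w_1|)$, and then exploits a centering observation — the integral of the non-increasing radial function $g''(|\cdot|)$ over an interval of length $r$ is largest when the interval is centered at the singularity — to get $\int_0^r g''(|s+w_1|)\,ds\le 2\int_0^{r/2}g''(s)\,ds$, which integrates exactly to $\frac{4}{\alpha-1}g(|z_1|/2)$. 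Every step is a monotonicity argument, which is precisely why the degenerating-at-$\alpha=2$ sharpness that blocks your route never enters. To complete your proposal you would need either a rigorous two-variable analysis of $\Phi(t_0)\le 2^{2-\alpha}$ or a from-scratch proof of the vector H\"older inequality; both are substantially more work than the paper's rearrangement trick.
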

\begin{proof}
The case $w=0$ can be readily verified. Now assume $w\neq 0$
and consider $z = (z_1, 0, 0, \cdots, 0)^T$, $w = (w_1, w')^T$ where $z_1, w_1 \in
  \mathbb{R}$, $w' \in \mathbb{R}^{n-1}$.
  Defining $g(r) = r^\alpha$ for $r > 0$, it follows that
  \begin{align*}
    \varphi(z & + w) - \varphi(w) - z \cdot \nabla \varphi(w)\\
    =& g\Big(\sqrt{(z_1 + w_1)^2 + |w'|^2}\Big) - g\Big(\sqrt{w_1^2 +
  |w'|^2}\,\Big) -
  g'\Big(\sqrt{w_1^2 + |w'|^2}\,\Big) \frac{w_1z_1}{\sqrt{w_1^2 + |w'|^2}} \\
  =& \int_0^{z_1} \int_0^{r} \Bigg[g''\Big(\sqrt{(s + w_1)^2 + |w'|^2}\,\Big) \frac{(s +
w_1)^2}{(s + w_1)^2 + |w'|^2} \\
& +  g'\left(\sqrt{(s + w_1)^2 + |w'|^2}\right) \Bigg(\frac{1}{\sqrt{(s +
 w_1)^2 + |w'|^2}} - \frac{(s + w_1)^2}{\big((s + w_1)^2 +
 |w'|^2\big)^{\frac{3}{2}}}\Bigg)\Bigg] dsdr \\
  =& \int_0^{z_1} \int_0^{r} \Bigg[g''\left(\sqrt{(s + w_1)^2 + |w'|^2}\right) \frac{(s +
w_1)^2}{(s + w_1)^2 + |w'|^2}\\
& +  \frac{|w'|^2g'\left(\sqrt{(s + w_1)^2 + |w'|^2}\right)}{\left((s + w_1)^2 +
 |w'|^2\right)^{\frac{3}{2}}}\Bigg] dsdr\,.
  \end{align*}
  Since $1 < \alpha \le 2$, we know that $g' , g'' \ge 0$, and $\frac{g'(r)}{r} = \frac{g''(r)}{\alpha-1} = \alpha
  r^{\alpha -2}$ is non-increasing for $r>0$. We also have the simple
  inequality $\frac{a + \frac{b}{\alpha - 1}}{a + b} \le \frac{1}{\alpha - 1}$ , $\forall a, b > 0$. Therefore
  \begin{align*}
     0 \le& \varphi(z + w) - \varphi(w) - z \cdot \nabla \varphi(w)\\
  =& \int_0^{z_1} \int_0^{r} g''\Big(\sqrt{(s + w_1)^2 + |w'|^2}\,\Big) \frac{(s +
  w_1)^2 + \frac{1}{\alpha - 1} |w'|^2}{(s + w_1)^2 + |w'|^2}  \,ds\,dr \\
\le & \frac{1}{\alpha - 1} \int_0^{z_1} \int_0^{r} g''\Big(\sqrt{(s +
  w_1)^2 + |w'|^2}\,\Big)\,ds\,dr\\
\le & \frac{1}{\alpha - 1} \int_0^{z_1} \int_0^{r} g''\big(|s + w_1|\big)\,ds\,dr
  \\
  \le & \frac{2}{\alpha - 1} \int_0^{|z_1|} \int_0^{\frac{r}{2}} g''(s)\,ds\,dr
\le   \frac{4}{\alpha - 1}  g\left(\frac{|z_1|}{2}\right) =
  \frac{4}{\alpha - 1} g\left(\frac{|z|}{2}\right)\,.
  \end{align*}
  For the general case, let $A$ be an $n\times n$ rotation matrix, such that $Az = (z_1, 0, 0, \cdots, 0)^T$, $z_1 \in \mathbb{R}$. Then
  \begin{align*}
    \varphi(z & + w)  - \varphi(w) - z \cdot \nabla \varphi(w)\\
  = & g\big(|z + w|\big) - g\big(|w|\big) - g'\big(|w|\big) \frac{w}{|w|}\cdot z \\
  = & g\big(|Az + Aw|\big) - g\big(|Aw|\big) -  g'\big(|Aw|\big) \frac{Aw}{|Aw|}\cdot Az \\
  = & \varphi(Az + Aw) - \varphi(Aw) - Az \cdot \nabla \varphi(Aw)\\
\le & \frac{4}{\alpha - 1} g\left(\frac{|Az|}{2}\right) = \frac{4}{\alpha -
  1} g\left(\frac{|z|}{2} \right), \,
  \end{align*}
  therefore the conclusion also holds for general $z \in \mathbb{R}^n$.
\end{proof}

\bibliographystyle{siam}
\bibliography{reference}

\begin{thebibliography}{10}

\bibitem{Allen2011}
{\sc L.~J. Allen}, {\em An Introduction to Stochastic Processes with
  Application to Biology}, CRS Press, 2011.

\bibitem{ball06}
{\sc K.~Ball, T.~G. Kurtz, L.~Popovic, and G.~Rempala}, {\em Asymptotic
  analysis of multiscale approximations to reaction networks}, Ann. Appl.
  Probab., 16 (2006), pp.~1925--1961.

\bibitem{baeuer2000}
{\sc N.~B\"{a}uerle}, {\em Asymptotic optimality of tracking policies in
  stochastic networks}, Ann. Appl. Probab., 10 (2000), pp.~1065--1083.

\bibitem{baeuerle2002}
\leavevmode\vrule height 2pt depth -1.6pt width 23pt, {\em Optimal control of
  queueing networks: an approach via fluid models}, Adv. Appl. Probab., 34
  (2002), pp.~313--328.

\bibitem{Bellman2003}
{\sc R.~E. Bellman}, {\em Dynamic Programming}, Dover Publications,
  Incorporated, 2003.

\bibitem{Bennett2013}
{\sc C.~C. Bennett and K.~Hauser}, {\em Artificial intelligence framework for
  simulating clinical decision-making: A {Markov} decision process approach},
  Artif. Intell. Med., 57 (2013), pp.~9--19.

\bibitem{Bentley1975}
{\sc J.~L. Bentley}, {\em Multidimensional binary search trees used for
  associative searching}, Commun. ACM, 18 (1975), pp.~509--517.

\bibitem{adp_bertsekas}
{\sc D.~P. Bertsekas}, {\em Dynamic Programming and Optimal Control, Vol. II :
  Approximate Dynamic Programming}, Athena Scientific, 4th~ed., 2012.

\bibitem{tau_size_selection}
{\sc Y.~Cao, D.~T. Gillespie, and L.~R. Petzold}, {\em Efficient step size
  selection for the tau-leaping simulation method}, J. Chem. Phys., 124 (2006),
  p.~044109.

\bibitem{cappelletti2016}
{\sc D.~Cappelletti and C.~Wiuf}, {\em Elimination of intermediate species in
  multiscale stochastic reaction networks}, Ann. Appl. Probab., 26 (2016),
  pp.~2915--2958.

\bibitem{dai1995}
{\sc J.~G. Dai}, {\em On positive {Harris} recurrence of multiclass queueing
  networks: A unified approach via fluid limit models}, Ann. Appl. Probab., 5
  (1995), pp.~49--77.

\bibitem{dai_meyn1995}
{\sc J.~G. Dai and S.~P. Meyn}, {\em Stability and convergence of moments for
  multiclass queueing networks via fluid limit models}, IEEE T. Automat.
  Contr., 40 (1995), pp.~1889--1904.

\bibitem{dupuis_game2003}
{\sc P.~Dupuis}, {\em Explicit solution to a robust queueing control problem},
  SIAM J. Control Optim., 42 (2003), pp.~1854--1875.

\bibitem{Duwal2015}
{\sc S.~Duwal, S.~Winkelmann, C.~Sch\"utte, and M.~von Kleist}, {\em Optimal
  treatment strategies in the context of 'treatment for prevention' against
  {HIV}-1 in resource-poor settings.}, PLoS Comput. Biol., 11 (2015),
  p.~e1004200.

\bibitem{markov_process_dynkin}
{\sc E.~B. Dynkin}, {\em Markov processes, vol. 1}, Grundlehren der
  mathematischen Wissenschaften, Academic press Berlin, New York, 1965.

\bibitem{Gardiner2010}
{\sc C.~Gardiner}, {\em Stochastic Methods}, Springer, 2010.

\bibitem{Gillespie1976_ssa}
{\sc D.~T. Gillespie}, {\em A general method for numerically simulating the
  stochastic time evolution of coupled chemical reactions}, J. Comput. Phys.,
  22 (1976), pp.~403 -- 434.

\bibitem{Gillespie1977_ssa}
\leavevmode\vrule height 2pt depth -1.6pt width 23pt, {\em Exact stochastic
  simulation of coupled chemical reactions}, J. Phys. Chem., 81 (1977),
  pp.~2340--2361.

\bibitem{Gillespie1991}
\leavevmode\vrule height 2pt depth -1.6pt width 23pt, {\em Markov Processes: An
  Introduction for Physical Scientists}, Academic Press, 1991.

\bibitem{gillespie2001}
\leavevmode\vrule height 2pt depth -1.6pt width 23pt, {\em Approximate
  accelerated stochastic simulation of chemically reacting systems}, J. Chem.
  Phys., 115 (2001), pp.~1716--1733.

\bibitem{ssck_gillespie}
\leavevmode\vrule height 2pt depth -1.6pt width 23pt, {\em Stochastic
  simulation of chemical kinetics}, Annu. Rev. Phys. Chem., 58 (2007),
  pp.~35--55.

\bibitem{gross_queue}
{\sc D.~Gross, J.~F. Shortle, J.~M. Thompson, and C.~M. Harris}, {\em
  Fundamentals of Queueing Theory}, Wiley-Interscience, New York, NY, USA,
  4th~ed., 2008.

\bibitem{harrison1997}
{\sc J.~M. Harrison and J.~A.~V. Mieghem}, {\em Dynamic control of {Brownian}
  networks: State space collapse and equivalent workload formulations}, Ann.
  Appl. Probab., 7 (1997), pp.~747--771.

\bibitem{Hauskrecht_Fraser2000}
{\sc M.~Hauskrecht and H.~Fraser}, {\em Planning treatment of ischemic heart
  disease with partially observable {Markov} decision processes}, Artif.
  Intell. Med., 18 (2000), pp.~221--244.

\bibitem{howardr60}
{\sc R.~A. Howard}, {\em {Dynamic Programming and Markov Processes}}, The MIT
  Press, 1th~ed., 1960.

\bibitem{randon_correction_tiejun}
{\sc Y.~Hu and T.~Li}, {\em Highly accurate tau-leaping methods with random
  corrections}, J. Chem. Phys., 130 (2009), p.~124109.

\bibitem{Kampen2007}
{\sc N.~G.~V. Kampen}, {\em Stochastic Processes in Physics and Chemistry},
  Elsevier, 3~ed., 2007.

\bibitem{kang2013}
{\sc H.-W. Kang and T.~G. Kurtz}, {\em {Separation of time-scales and model
  reduction for stochastic reaction networks}}, Ann. Appl. Probab., 23 (2013),
  pp.~529--583.

\bibitem{kang2014}
{\sc H.-W. Kang, T.~G. Kurtz, and L.~Popovic}, {\em Central limit theorems and
  diffusion approximations for multiscale markov chain models}, Ann. Appl.
  Probab., 24 (2014), pp.~721--759.

\bibitem{kirk2004optimal}
{\sc D.~E. Kirk}, {\em Optimal Control Theory: An Introduction}, Dover Books on
  Electrical Engineering Series, Dover Publications, 2004.

\bibitem{ode_kurtz_limit}
{\sc T.~G. Kurtz}, {\em Solutions of ordinary differential equations as limits
  of pure jump {Markov} processes}, J. Appl. Prob., 7 (1970), pp.~49--58.

\bibitem{limit_thm_ode1971}
\leavevmode\vrule height 2pt depth -1.6pt width 23pt, {\em Limit theorems for
  sequences of jump {M}arkov processes approximating ordinary differential
  processes}, J. Appl. Prob., 8 (1971), pp.~344--356.

\bibitem{kurtz_limit_thm_diffusion_approx}
\leavevmode\vrule height 2pt depth -1.6pt width 23pt, {\em Limit theorems and
  diffusion approximations for density dependent {Markov} chains}, in
  Stochastic Systems: Modeling, Identification and Optimization, I, vol.~5 of
  Mathematical Programming Studies, Springer Berlin Heidelberg, 1976,
  pp.~67--78.

\bibitem{Kurtz1978}
\leavevmode\vrule height 2pt depth -1.6pt width 23pt, {\em Strong approximation
  theorems for density dependent {Markov} chains}, Stoch. Proc. Appl., 6
  (1978), pp.~223 -- 240.

\bibitem{Kushner2001}
{\sc H.~J. Kushner}, {\em Heavy Traffic Analysis of Controlled Queueing and
  Communication Networks}, Springer, New York, 2001.

\bibitem{kushner_martins1996}
{\sc H.~J. Kushner and L.~F. Martins}, {\em Heavy traffic analysis of a
  controlled multiclass queueing network via weak convergence methods}, SIAM J.
  Control Optim., 34 (1996), pp.~1781--1797.

\bibitem{kushner_ramachandran1989}
{\sc H.~J. Kushner and K.~M. Ramachandran}, {\em Optimal and approximately
  optimal control policies for queues in heavy traffic}, SIAM J. Control
  Optim., 27 (1989), pp.~1293--1318.

\bibitem{Lenhart2007}
{\sc S.~Lenhart and J.~T. Workman}, {\em Optimal Control Applied to biological
  models}, Chapman \& Hall/CRC Mathematical and Computational Biology, Chapman
  \& Hall/CRC, 2007.

\bibitem{maglaras2000}
{\sc C.~Maglaras}, {\em Discrete-review policies for scheduling stochastic
  networks: trajectory tracking and fluid-scale asymptotic optimality}, Ann.
  Appl. Probab., 10 (2000), pp.~897--929.

\bibitem{Mandelbaum1998}
{\sc A.~Mandelbaum, W.~A. Massey, and M.~I. Reiman}, {\em Strong approximations
  for {Markovian} service networks}, Queueing Syst., 30 (1998), pp.~149--201.

\bibitem{mandelbaum_state_dependent_1998}
{\sc A.~Mandelbaum and G.~Pats}, {\em State-dependent stochastic networks. part
  {I}. approximations and applications with continuous diffusion limits}, Ann.
  Appl. Probab., 8 (1998), pp.~569--646.

\bibitem{martin_soner_1996}
{\sc L.~F. Martins, S.~E. Shreve, and H.~M. Soner}, {\em Heavy traffic
  convergence of a controlled, multiclass queueing system}, SIAM J. Control
  Optim., 34 (1996), pp.~2133--2171.

\bibitem{meyn_parki}
{\sc S.~P. Meyn}, {\em Sequencing and routing in multiclass queueing networks
  part {I}: Feedback regulation}, SIAM J. Control Optim., 40 (2001),
  pp.~741--776.

\bibitem{Meyn_book}
{\sc S.~P. Meyn}, {\em Control Techniques for Complex Networks}, Cambridge
  University Press, New York, NY, USA, 1st~ed., 2007.

\bibitem{ANNmanual}
{\sc D.~M. Mount and S.~Arya}, {\em {ANN Programming Manual}}.
\newblock \url{http://www.cs.umd.edu/~mount/ANN/}, 2010.

\bibitem{oksendalSDE}
{\sc B.~{\O}ksendal}, {\em Stochastic Differential Equations: An Introduction
  with Applications}, Springer, 5th~ed., 2000.

\bibitem{Pahle2009}
{\sc J.~Pahle}, {\em Biochemical simulations: stochastic, approximate
  stochastic and hybrid approaches}, Brief Bioinform, 10 (2009), pp.~53--64.

\bibitem{PangDay2007}
{\sc G.~Pang and M.~V. Day}, {\em Fluid limits of optimally controlled queueing
  networks}, J. Appl. Math. Stoch. Anal., Article ID 68958 (2007).

\bibitem{pfaffelhuber2015}
{\sc P.~Pfaffelhuber and L.~Popovic}, {\em Scaling limits of spatial
  compartment models for chemical reaction networks}, Ann. Appl. Probab., 25
  (2015), pp.~3162--3208.

\bibitem{Piunovskiy2011}
{\sc A.~Piunovskiy and Y.~Zhang}, {\em Accuracy of fluid approximations to
  controlled birth-and-death processes: absorbing case}, Mathematical Methods
  of Operations Research, 73 (2011), pp.~159--187.

\bibitem{adp_powell}
{\sc W.~B. Powell}, {\em Approximate Dynamic Programming: Solving the Curses of
  Dimensionality (Wiley Series in Probability and Statistics)},
  Wiley-Interscience, 2007.

\bibitem{Puterman1994}
{\sc M.~L. Puterman}, {\em Markov Decision Processes: Discrete Stochastic
  Dynamic Programming}, John Wiley \& Sons, Inc., New York, NY, USA, 1st~ed.,
  1994.

\bibitem{stiffness_implicit_tau}
{\sc M.~Rathinam, L.~R. Petzold, Y.~Cao, and D.~T. Gillespie}, {\em Stiffness
  in stochastic chemically reacting systems: The implicit tau-leaping method},
  J. Chem. Phys., 119 (2003), pp.~12784--12794.

\bibitem{Schaefer2004}
{\sc A.~J. Schaefer, M.~D. Bailey, S.~M. Shechter, and M.~S. Roberts}, {\em
  Modeling Medical Treatment Using {Markov} Decision Processes}, Springer US,
  Boston, MA, 2004, pp.~593--612.

\bibitem{Shani_pomdp_survey}
{\sc G.~Shani, J.~Pineau, and R.~Kaplow}, {\em A survey of point-based {POMDP}
  solvers}, Auton. Agent. Multi Agent Syst., 27 (2013), pp.~1--51.

\bibitem{Sutton1998}
{\sc R.~S. Sutton and A.~G. Barto}, {\em Reinforcement Learning: An
  Introduction}, MIT Press, 1998.

\bibitem{ramanan2011}
{\sc M.~\v{C}udina and K.~Ramanan}, {\em Asymptotically optimal controls for
  time-inhomogeneous networks}, SIAM J. Control Optim., 49 (2011),
  pp.~611--645.

\bibitem{Weibull1997}
{\sc J.~W. Weibull}, {\em Evolutionary Game Theory}, MIT Press, 1997.

\bibitem{limits_whitt}
{\sc W.~Whitt}, {\em Stochastic-process limits : An introduction to
  stochastic-process limits and their application to queues}, Springer series
  in operations research, Springer, New York, Berlin, Paris, 2002.

\bibitem{Wilkinson2006}
{\sc D.~J. Wilkinson}, {\em Stochastic Modelling for Systems Biology}, Chapman
  \& Hall/CRC, 2006.

\bibitem{Winkelmann2014}
{\sc S.~Winkelmann, C.~Sch\"utte, and M.~von Kleist}, {\em Markov control
  processes with rare state observation: Theory and application to treatment
  scheduling in {HIV}-1}, Commun. Math. Sci., 12 (2014), pp.~859--877.

\end{thebibliography}
\end{document}